\providecommand{\U}[1]{\protect\rule{.1in}{.1in}}
\newtheorem{theorem}{Theorem}
\newtheorem{proposition}[theorem]{Proposition}
\newenvironment{proof}[1][Proof]{\textbf{#1.} }{\ \rule{0.5em}{0.5em}}
\begin{document}

\title{A Note on the Infinitesimal Baker-Campbell-Hausdorff Formula}
\author{Hirokazu NISHIMURA$^{1}$ and Hirowaki TAKAMIYA$^{2}$\\1 Institute of Mathematics, University of Tsukuba\\\ \ Tsukuba, Ibaraki 305-8571\\\ \ Japan\\2 Naha Commercial High School, 16-1, Matsuyama-1\\\ \ Naha, Okinawa 900-0032\\\ \ Japan}
\maketitle

\begin{abstract}
We have studied the infinitesimal Baker-Campbell-Hausdorff formula up to $n=4
$ (Math. Appl. \textbf{2} (2013), 61-91). In this note we correct some errors
in our calculation for $n=4$\ and presents the calculation for $n=5$ by using Mathematica.

\end{abstract}

\section{Introduction}

We are going to work within synthetic differential geometry, in which a
\textit{Lie group} $G$\ is a group and a microlinear space at the same time.
For synthetic differential geometry, the reader is referred to \cite{ko} and
\cite{la}. Its \textit{Lie algebra} (i.e., its tangent space $\mathbf{T}_{e}G
$\ of $G$\ at the identity $e\in G$), usually denoted by $\mathfrak{g}$, is
endowed with a Lie bracket $\left[  ,\right]  $ abiding by antisymmetry and
the Jacobi identity. Each element $X\in\mathfrak{g}$ is a mapping $d\in
D\mapsto X_{d}\in G$ with $X_{0}=e$, where
\[
D=\left\{  d\in\mathbb{R}\mid d^{2}=0\right\}
\]

We assume that the so-called \textit{exponential mapping} $\exp:\mathfrak{g}%
\rightarrow G$ exists. The infinitesimal Baker-Campbell-Hausdorff formula
expresses
\[
\exp\;\left(  d_{1}+...+d_{n}\right)  X.\exp\;\left(  d_{1}+...+d_{n}\right)
Y
\]
as
\[
\exp\;\left(  \text{a Lie polynomial of }X\text{\ and }Y\right)
\]
where $X,Y\in\mathfrak{g}$ and $d_{1},...,d_{n}\in D$. In \cite{ni} we have
calculated the infinitesimal Baker-Campbell-Hausdorff formula up to $n=4$, but
the second author \cite{ta} found out some errors in the calculation for $n=4$.

This paper is based upon \cite{ta}. We correct some errors in the calculation
of the infinitesimal Baker-Campbell-Hausdorff formula in case of $n=4$ in our
previous paper \cite{ni} and we present a calculation of the infinitesimal
Baker-Campbell-Hausdorff formula in case of $n=5$ newly. Both calculations
were implemented by using Mathematica.

\section{Preliminaries}

The infinitesimal Baker-Campbell-Hausdorff formula for $n=3$ goes as follows:

\begin{theorem}
\label{t2.0}(cf. Theorem 7.5 and Theorem 8.3 of \cite{ni}) Given
$X,Y\in\mathfrak{g}$\ and $d_{1},d_{2},d_{3}\in D$, we have
\begin{align*}
& \exp\,\left(  d_{1}+d_{2}+d_{3}\right)  X.\exp\,\left(  d_{1}+d_{2}%
+d_{3}\right)  Y\\
& =\exp\,\left(  d_{1}+d_{2}+d_{3}\right)  \left(  X+Y\right)  +\frac{1}%
{2}\left(  d_{1}+d_{2}+d_{3}\right)  ^{2}\left[  X,Y\right]  +\\
& \frac{1}{12}\left(  d_{1}+d_{2}+d_{3}\right)  ^{3}\left[  X-Y,\left[
X,Y\right]  \right]
\end{align*}

\end{theorem}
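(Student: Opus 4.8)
The plan is to reconstruct, in outline, the argument of \cite{ni}. Set $s=d_{1}+d_{2}+d_{3}$. Since $d_{i}^{2}=0$ for each $i$, every monomial of degree $\geq 4$ in $d_{1},d_{2},d_{3}$ has a repeated factor, so $s^{4}=0$; on the other hand $s^{2}=2(d_{1}d_{2}+d_{1}d_{3}+d_{2}d_{3})$ and $s^{3}=6d_{1}d_{2}d_{3}$ need not vanish, so $s$ lies in the infinitesimal object $D_{3}=\{t\in\mathbb{R}\mid t^{4}=0\}$, and the assertion is a statement about the third-order behaviour at parameter $s$ of the product of the two one-parameter subgroups $t\mapsto\exp(tX)$ and $t\mapsto\exp(tY)$. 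I would first argue, as in \cite{ni}, that the left-hand side is of the form $\exp\circ\gamma$ for a unique curve $\gamma\colon D_{3}\to\mathfrak{g}$ with $\gamma(0)=0$; this uses that the differential of $\exp$ at the origin is $\mathrm{id}_{\mathfrak{g}}$, together with microlinearity of $G$ and of $\mathfrak{g}$, and is the point at which the standing hypothesis on $\exp$ enters. By the Kock--Lawvere axiom applied to the Euclidean $\mathbb{R}$-module $\mathfrak{g}$, I may then write $\gamma(s)=s\gamma_{1}+\tfrac{1}{2}s^{2}\gamma_{2}+\tfrac{1}{6}s^{3}\gamma_{3}$ with uniquely determined $\gamma_{1},\gamma_{2},\gamma_{3}\in\mathfrak{g}$, so the entire problem reduces to identifying these three elements.

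The linear and quadratic coefficients I would obtain by restriction. Restricting to $s\in D$ gives $\exp(sX)\exp(sY)=X_{s}Y_{s}$, which equals $(X+Y)_{s}=\exp(s(X+Y))$ because the addition on $\mathfrak{g}=\mathbf{T}_{e}G$ is precisely the one induced from the group multiplication; hence $\gamma_{1}=X+Y$. Restricting next to $s=d_{1}+d_{2}$ (so that $s^{3}=0$), I would use the group-commutator description of the bracket --- for $d,e\in D$, $A_{d}B_{e}A_{-d}B_{-e}=[A,B]_{de}$, equivalently $A_{d}B_{e}=B_{e}A_{d}\,[A,B]_{de}$ modulo degree-four terms --- to move the two $Y$-factors of $X_{d_{1}}X_{d_{2}}Y_{d_{1}}Y_{d_{2}}$ past the $X$-factors and collect the coefficient of $d_{1}d_{2}$; this recovers the $n=2$ formula $\exp\!\big(s(X+Y)+\tfrac{1}{2}s^{2}[X,Y]\big)$, so $\gamma_{2}=[X,Y]$.

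The hard part is $\gamma_{3}$, which demands a genuine third-order calculation. Writing the left-hand side as $X_{d_{1}}X_{d_{2}}X_{d_{3}}Y_{d_{1}}Y_{d_{2}}Y_{d_{3}}$ (via $\exp((a+b)Z)=\exp(aZ)\exp(bZ)$) and disentangling the $X$'s from the $Y$'s by repeated use of the commutation relation above, one must carefully track the secondary corrections that appear when the bracket-factors so produced are themselves commuted past the remaining factors --- these are exactly the contributions at order $d_{1}d_{2}d_{3}$ --- and then collapse the resulting iterated brackets by antisymmetry and the Jacobi identity; this yields $\gamma_{3}=\tfrac{1}{2}\big([X,[X,Y]]-[Y,[X,Y]]\big)=\tfrac{1}{2}[X-Y,[X,Y]]$. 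Substituting back gives $\gamma(s)=s(X+Y)+\tfrac{1}{2}s^{2}[X,Y]+\tfrac{1}{12}s^{3}[X-Y,[X,Y]]$, which is the claimed identity; the Taylor expansion on $D_{3}$ contains no further terms, in agreement with the fact that in the classical Baker-Campbell-Hausdorff series every term of degree $\geq 4$ in $(sX,sY)$ carries a factor $s^{4}=0$ and so disappears.

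I expect the third-order step to be the only real obstacle. The bookkeeping of the secondary commutator corrections, together with the repeated appeals to antisymmetry and the Jacobi identity needed to assemble them into the symmetric combination $[X-Y,[X,Y]]$ with coefficient exactly $\tfrac{1}{12}$, is precisely the kind of calculation in which sign and coefficient errors creep in and then propagate to the $n=4$ and $n=5$ cases --- which is the very issue the present note sets out to address.
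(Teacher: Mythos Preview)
The present paper does not prove this theorem at all: it is quoted from \cite{ni} and used as input for the $n=4$ and $n=5$ computations. So there is no proof here to compare against directly. That said, the method of \cite{ni} is plainly visible in the $n=4$ and $n=5$ proofs of this note, and it is \emph{not} the one you describe. The argument there is inductive on $n$: one peels off the last infinitesimal, writing
\[
\exp\,d_{3}X\cdot\bigl(\exp\,(d_{1}+d_{2})X\cdot\exp\,(d_{1}+d_{2})Y\bigr)\cdot\exp\,d_{3}Y,
\]
applies the $n=2$ formula to the middle factor, and then absorbs the flanking $\exp\,d_{3}Y$ and $\exp\,d_{3}X$ into the central exponential by iterated use of the explicit series for $\delta^{\mathrm{left}}(\exp)$ and $\delta^{\mathrm{right}}(\exp)$ from Theorem~\ref{t2.1}, together with Propositions~\ref{t2.2} and~\ref{t2.3}. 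No group-commutator shuffling of the form $A_{d}B_{e}=B_{e}A_{d}[A,B]_{de}$ enters; the entire bookkeeping is carried by the logarithmic derivatives.

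Your route---posit $\exp(sX)\exp(sY)=\exp\gamma(s)$, Taylor-expand $\gamma$ on $D_{3}$, and read off $\gamma_{1},\gamma_{2},\gamma_{3}$ by successive restriction to $D$, $D_{2}$, $D_{3}$---is a legitimate alternative, and your identification of $\gamma_{1}$ and $\gamma_{2}$ is fine. Its cost is that the existence of $\gamma$ must be argued up front (you flag this), and the decisive third-order step is only gestured at: ``disentangling'' $X_{d_{1}}X_{d_{2}}X_{d_{3}}Y_{d_{1}}Y_{d_{2}}Y_{d_{3}}$ and tracking the secondary corrections is exactly the computation whose outcome is in question, and you have not performed it. By contrast, the logarithmic-derivative scheme avoids positing $\gamma$ in advance, is mechanical enough to be fed to Mathematica, and scales uniformly to $n=4,5$, which is precisely why the authors adopt it. If you intend to supply an independent proof, you should either carry out the commutator calculation in full or recast the argument in the logarithmic-derivative form actually used in \cite{ni}.
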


The tangent space $\mathbf{T}_{X}\mathfrak{g}$ of $\mathfrak{g}$\ at
$X\in\mathfrak{g}$\ is naturally identified with $\mathfrak{g}$\ itself. That
is to say, each $Y\in\mathfrak{g}$ gives rise to $\left(  d\in D\mapsto
X+dY\in\mathfrak{g}\right)  \in\mathbf{T}_{X}\mathfrak{g}$, which yields a
bijection between $\mathfrak{g}$\ and $\mathbf{T}_{X}\mathfrak{g}$. Its
\textit{left logarithmic derivative} $\delta^{\mathrm{left}}\left(
\exp\right)  $ and its \textit{right logarithmic derivative} $\delta
^{\mathrm{right}}\left(  \exp\right)  $ are characterized by the following
formulas:
\begin{equation}
\exp\;X+dY=\exp\;X.\left(  \delta^{\mathrm{left}}\left(  \exp\right)  \left(
X\right)  \left(  Y\right)  \right)  _{d}\label{2.1}%
\end{equation}
and
\begin{equation}
\exp\;X+dY=\left(  \delta^{\mathrm{right}}\left(  \exp\right)  \left(
X\right)  \left(  Y\right)  \right)  _{d}.\exp\;X\label{2.2}%
\end{equation}
for any $X,Y\in\mathfrak{g}$ and any $d\in D$. For logarithmic derivatives,
the reader is referred to \S 5 of \cite{ni} and \S 38.1 of \cite{krmi}. We
have the following well-known formulas.

\begin{theorem}
\label{t2.1}(cf. Theorem 5.3 and Theorem 5.8 of \cite{ni}) Given
$X\in\mathfrak{g}$\ with $\left(  \mathrm{ad}\,X\right)  ^{n+1}$ vanishing for
some natural number $n$, we have
\[
\delta^{\mathrm{left}}\left(  \exp\right)  \left(  X\right)  =\sum_{p=0}%
^{n}\frac{\left(  -1\right)  ^{p}}{\left(  p+1\right)  !}\left(
\mathrm{ad}\,X\right)  ^{p}%
\]
and
\[
\delta^{\mathrm{right}}\left(  \exp\right)  \left(  X\right)  =\sum_{p=0}%
^{n}\frac{1}{\left(  p+1\right)  !}\left(  \mathrm{ad}\,X\right)  ^{p}%
\]

\end{theorem}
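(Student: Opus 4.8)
The plan is to reduce the statement to a linear ordinary differential equation in the line object $\mathbb{R}$ and to solve that equation, the hypothesis $(\mathrm{ad}\,X)^{n+1}=0$ being exactly what makes the solution a polynomial. Fix $X,Y\in\mathfrak g$ with $(\mathrm{ad}\,X)^{n+1}=0$ and consider the map $\Phi\colon\mathbb{R}\times D\rightarrow G$, $\Phi(t,s)=\exp\,t(X+sY)$, so that for each $s$ the curve $t\mapsto\Phi(t,s)$ is the one-parameter subgroup generated by $X+sY$. Transporting the $s$-derivative at $s=0$ back to $\mathfrak g=\mathbf{T}_{e}G$ by left translation with $\exp\,(-tX)$ yields a curve $w\colon\mathbb{R}\rightarrow\mathfrak g$ with $w(0)=0$, and the characterisation (\ref{2.1}) of $\delta^{\mathrm{left}}(\exp)$ at $t=1$ gives $w(1)=\delta^{\mathrm{left}}(\exp)(X)(Y)$; using right translation and (\ref{2.2}) instead produces a curve $\bar{w}$ with $\bar{w}(0)=0$ and $\bar{w}(1)=\delta^{\mathrm{right}}(\exp)(X)(Y)$.

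The next step is to differentiate, with respect to $s$ at $s=0$, the defining property of $\Phi$ — namely that its left (resp.\ right) logarithmic derivative in the variable $t$ is constantly $X+sY$. Expanding the mixed infinitesimal increment $\exp\,(t+\delta t)(X+\delta s\,Y)$ and invoking microlinearity of $G$ to treat the group commutator to first order produces the linear equations
\[
\dot{w}(t)=Y-(\mathrm{ad}\,X)\,w(t),\qquad \dot{\bar{w}}(t)=Y+(\mathrm{ad}\,X)\,\bar{w}(t),
\]
each with initial value $0$. This is the synthetic counterpart of the first–variation-of-the-exponential identity, and is precisely the computation underlying Theorems 5.3 and 5.8 of \cite{ni}.

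It remains to solve these equations. Because $(\mathrm{ad}\,X)^{n+1}=0$, the candidate maps
\[
t\mapsto\sum_{p=0}^{n}\frac{(-1)^{p}}{(p+1)!}\,t^{p+1}(\mathrm{ad}\,X)^{p}(Y)\quad\text{and}\quad t\mapsto\sum_{p=0}^{n}\frac{1}{(p+1)!}\,t^{p+1}(\mathrm{ad}\,X)^{p}(Y)
\]
are polynomials in $t$ of degree at most $n+1$; a termwise check — re-indexing the sum and using $(\mathrm{ad}\,X)^{n+1}=0$ to discard the top term — shows they satisfy the respective equations and vanish at $t=0$. By the uniqueness clause of the integration axiom they coincide with $w$ and $\bar{w}$, and evaluation at $t=1$ yields the two asserted formulas. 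As a consistency check one may also note that the left formula follows from the right one, and conversely, upon replacing $X$ by $-X$, since $\exp\,(-X)=(\exp\,X)^{-1}$ and $(\mathrm{ad}\,(-X))^{p}=(-1)^{p}(\mathrm{ad}\,X)^{p}$.

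The main obstacle is the second step: rigorously extracting the first–variation equations inside synthetic differential geometry, i.e.\ controlling $\exp\,(t+\delta t)(X+\delta s\,Y)$ to bilinear order in the infinitesimals $\delta t,\delta s$ and isolating the part linear in $\delta s$. This is where microlinearity of $G$ and the exact form of (\ref{2.1}) and (\ref{2.2}) are used, and it is carried out in detail in \S 5 of \cite{ni}; once it is available, the first and third steps are purely formal.
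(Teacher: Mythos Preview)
The paper does not prove this theorem; it is stated in the Preliminaries as a known result imported from \cite{ni} (Theorems 5.3 and 5.8 there), so there is no in-paper proof to compare against. Your sketch is the standard first-variation argument and is, as you yourself note, exactly the computation carried out in \S 5 of \cite{ni}: set up the variation $t\mapsto\exp t(X+sY)$, derive the linear ODE $\dot w=Y\mp(\mathrm{ad}\,X)w$ with $w(0)=0$ for the left/right-translated $s$-derivative, and solve it as a polynomial in $t$ thanks to $(\mathrm{ad}\,X)^{n+1}=0$, invoking the integration axiom for uniqueness. The termwise check of the candidate solution is correct, and your closing remark relating the two formulas via $X\mapsto -X$ is also fine. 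The only soft spot you correctly flag is the derivation of the ODE inside SDG, which is the substantive content of \cite{ni}, \S 5; since you defer to that reference, your proposal is adequate for a result the present paper merely quotes.
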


We note in passing that

\begin{proposition}
\label{t2.2}(cf. Proposition 5.4 of \cite{ni}) For any $X,Y\in\mathfrak{g}%
$\ with $\left[  X,Y\right]  $ vanishing, we have
\[
\exp\;X.\exp\;Y=\exp\;X+Y
\]

\end{proposition}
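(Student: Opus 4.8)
The plan is to deduce Proposition~\ref{t2.2} from the logarithmic derivative formulas of Theorem~\ref{t2.1}, specialized to the case where the relevant adjoint operators square to zero. First I would observe that the hypothesis $[X,Y]=0$ means $(\mathrm{ad}\,X)(Y)=0$, so that in the expansion of $\delta^{\mathrm{left}}(\exp)(X)$ only the $p=0$ term survives when applied to $Y$; thus $\delta^{\mathrm{left}}(\exp)(X)(Y)=Y$. However, to use formula~\eqref{2.1} I need $(\mathrm{ad}\,X)^{n+1}$ to vanish for some $n$ — here the cleanest route is to apply the formula along a one-parameter family.

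Concretely, I would fix $d\in D$ and consider the element $X+dY\in\mathfrak{g}$. Since $d^2=0$, we have $(\mathrm{ad}(X+dY))^2 = (\mathrm{ad}\,X)^2 + d(\mathrm{ad}\,X\cdot\mathrm{ad}\,Y + \mathrm{ad}\,Y\cdot\mathrm{ad}\,X)$, and then using $[X,Y]=0$ one checks the cross terms combine in a controlled way; more to the point, I would simply invoke~\eqref{2.1} with the roles arranged so that $\exp\,(X+dY)=\exp\,X.(\delta^{\mathrm{left}}(\exp)(X)(Y))_d = \exp\,X.Y_d$, where the last equality uses $\delta^{\mathrm{left}}(\exp)(X)(Y)=Y$ as computed above. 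Now $Y_d = (d\mapsto \exp\,dY)$ evaluated appropriately — that is, $\exp\,X.\exp\,dY$. Similarly, $Y_d$ unravels via the definition $Y:d\in D\mapsto Y_d\in G$ together with $\exp\,dY$ being the canonical such curve, so $\exp\,(X+dY)=\exp\,X.\exp\,dY$ for all $d\in D$.

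The final step is a microlinearity/Kock--Lawvere argument: having shown $\exp\,(X+dY)=\exp\,X.\exp\,(dY)$ for every $d\in D$, I want to upgrade this to the statement with $d$ replaced by $1$, i.e. $\exp\,(X+Y)=\exp\,X.\exp\,Y$. Here I would use that $[X,Y]=0$ implies $\mathrm{ad}\,X$ and $\mathrm{ad}\,Y$ satisfy enough nilpotency (or, in the synthetic setting, that the relevant maps factor through a Weil algebra) so that both sides, as functions of a formal parameter $t$ with $tX$ and $tY$, agree on all of $D$ and hence — by iterating, since $t_1Y + t_2 Y = (t_1+t_2)Y$ and $\exp$ is a homomorphism on the abelian subalgebra spanned by $X$ and $Y$ — on finite sums $d_1+\dots+d_k$, and then one passes to $1$.

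The main obstacle I expect is precisely this passage from the infinitesimal identity (valid on $D$) to the finite identity at parameter value $1$: in ordinary differential geometry one would integrate, but synthetically one must argue that the abelian subalgebra $\mathbb{R}X+\mathbb{R}Y$ exponentiates to an abelian subgroup, which is essentially the assertion being proved. The honest resolution is either (i) to note that the whole argument takes place inside the image of a single Weil algebra where ``$1$'' is reached from $D$ by finitely many additions $d_1+\cdots+d_n$, so that $\exp\,((d_1+\cdots+d_n)(X+Y)) = \exp\,((d_1+\cdots+d_n)X).\exp\,((d_1+\cdots+d_n)Y)$ follows by induction on $n$ from the $D$-case together with associativity and the commutativity $\exp\,(d_iX).\exp\,(d_jY)=\exp\,(d_jY).\exp\,(d_iX)$ (itself from the $n=2$ infinitesimal BCH with vanishing bracket), or (ii) to cite the corresponding result in \cite{ni} directly. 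I would take route (i), making the commuting-exponentials lemma explicit as the key sub-step, since that is where the hypothesis $[X,Y]=0$ does its real work.
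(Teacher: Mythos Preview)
The paper does not prove Proposition~\ref{t2.2} at all; it simply states it with a citation to \cite{ni} (``cf.\ Proposition~5.4 of \cite{ni}''). So there is no proof in this paper to compare your attempt against --- you are supplying an argument where the authors supply none.

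That said, your proposed argument has two genuine gaps. First, to invoke Theorem~\ref{t2.1} for $\delta^{\mathrm{left}}(\exp)(X)$ you need $(\mathrm{ad}\,X)^{n+1}=0$ as an operator on all of $\mathfrak{g}$, not merely $(\mathrm{ad}\,X)(Y)=0$; the hypothesis $[X,Y]=0$ gives only the latter. You cannot conclude $\delta^{\mathrm{left}}(\exp)(X)(Y)=Y$ from Theorem~\ref{t2.1} without that global nilpotency, and your detour through $X+dY$ does not repair this. Second, and more seriously, the ``upgrade'' from $d\in D$ to the value $1$ is not available in synthetic differential geometry: there is no integration, and $1$ is not a finite sum of elements of $D$. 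Your route~(i) would at best establish $\exp((d_1+\cdots+d_n)X)\cdot\exp((d_1+\cdots+d_n)Y)=\exp((d_1+\cdots+d_n)(X+Y))$ for $d_i\in D$, which is strictly weaker than the stated proposition (though, as it happens, this weaker form is all the paper ever uses). Your route~(ii), citing \cite{ni}, is exactly what the paper does and is the honest move here; the full statement for arbitrary $X,Y$ with $[X,Y]=0$ requires the construction of $\exp$ and its functoriality on abelian subalgebras, which lives in \cite{ni}, not in the tools assembled in this note.
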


The following simple proposition is very useful.

\begin{proposition}
\label{t2.3}(cf. Proposition 4.9 of \cite{ni}) For any $X\in\mathfrak{g}$ and
any $d\in D$, we have
\[
\exp\;dX=X_{d}%
\]

\end{proposition}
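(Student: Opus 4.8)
The plan is to deduce the identity from the characterization (\ref{2.1}) of the left logarithmic derivative, evaluated at the zero vector $0\in\mathfrak{g}$. First I would record the auxiliary fact that $\exp 0=e$: applying Proposition \ref{t2.2} to the pair $X=Y=0$ (whose bracket trivially vanishes) gives $\exp 0.\exp 0=\exp(0+0)=\exp 0$, and cancelling one factor of $\exp 0$ in the group $G$ yields $\exp 0=e$.

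Next I would compute $\delta^{\mathrm{left}}(\exp)(0)$. Since $\mathrm{ad}\,0$ is the zero endomorphism of $\mathfrak{g}$, the power $(\mathrm{ad}\,0)^{2}$ vanishes, so Theorem \ref{t2.1} applies with $n=1$ and gives
\[
\delta^{\mathrm{left}}(\exp)(0)=\mathrm{id}_{\mathfrak{g}}-\frac{1}{2}\,\mathrm{ad}\,0=\mathrm{id}_{\mathfrak{g}},
\]
because $\mathrm{ad}\,0=0$. In particular $\delta^{\mathrm{left}}(\exp)(0)(X)=X$ for every $X\in\mathfrak{g}$.

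Finally I would substitute $X$ by $0$ and $Y$ by $X$ in (\ref{2.1}). Since $0+dX=dX$ in the vector space $\mathfrak{g}$, the left-hand side becomes $\exp\,dX$, while the right-hand side becomes $\exp 0.\left(\delta^{\mathrm{left}}(\exp)(0)(X)\right)_{d}=e.X_{d}=X_{d}$, using the two facts established above. This yields $\exp\,dX=X_{d}$, as desired.

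I do not expect a genuine obstacle here. The only points requiring a little care are that formula (\ref{2.1}) is a statement about a tangent vector at $\exp X$, so one must apply the canonical identification $\mathbf{T}_{X}\mathfrak{g}\cong\mathfrak{g}$ consistently, and that $\exp 0=e$ should be justified rather than taken for granted; both are disposed of by the two preliminary steps. (A more directly synthetic argument, observing that $d\in D$ only detects first-order behaviour so that $\exp\,dX$ must coincide with the value at $d$ of any curve through $e$ with tangent $X$, would also work, but the route above has the advantage of using only the results already assembled in the Preliminaries.)
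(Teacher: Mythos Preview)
The paper does not supply a proof of this proposition; it is listed in the Preliminaries with a reference to Proposition 4.9 of \cite{ni} and is simply used thereafter. There is therefore no in-paper proof to compare against.

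Your argument is internally correct: each step follows from the identities recorded in the Preliminaries, and the deduction $\exp 0=e$, $\delta^{\mathrm{left}}(\exp)(0)=\mathrm{id}_{\mathfrak{g}}$, hence $\exp\,dX=e\cdot X_{d}=X_{d}$ via (\ref{2.1}) is clean. The one point worth flagging is a potential circularity at the level of the source \cite{ni}: there the statement you are proving is Proposition 4.9, while the results you invoke (Theorem \ref{t2.1} and Proposition \ref{t2.2}) are Theorems 5.3, 5.8 and Proposition 5.4, i.e.\ they appear \emph{after} Proposition 4.9 and may well rely on it. Within the present note this is harmless, since all four items are imported as established facts; but if you intend your argument as an independent derivation rather than a consistency check, that dependency in \cite{ni} would need to be verified.
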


\section{The BCH Formula for n=4}

\begin{theorem}
\label{t3.1}
\begin{align*}
& \exp\;\left(  d_{1}+d_{2}+d_{3}+d_{4}\right)  X.\exp\;\left(  d_{1}%
+d_{2}+d_{3}+d_{4}\right)  Y\\
& =\exp\;\left(  d_{1}+d_{2}+d_{3}+d_{4}\right)  \left(  X+Y\right) \\
& +\frac{1}{2}\left(  d_{1}+d_{2}+d_{3}+d_{4}\right)  ^{2}\left[  X,Y\right]
\\
& +\frac{1}{12}\left(  d_{1}+d_{2}+d_{3}+d_{4}\right)  ^{3}\left[  \left[
X,Y\right]  ,Y-X\right] \\
& +\frac{1}{96}\left(  d_{1}+d_{2}+d_{3}+d_{4}\right)  ^{4}\left(  \left[
X+Y,\left[  \left[  X,Y\right]  ,X+Y\right]  \right]  +\left[  \left[  \left[
X,Y\right]  ,X-Y\right]  ,X-Y\right]  \right)
\end{align*}

\end{theorem}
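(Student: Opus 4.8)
The plan is to reduce the $n=4$ formula to the $n=3$ formula (Theorem \ref{t2.0}) by peeling off the last infinitesimal $d_{4}$. Write $e=d_{1}+d_{2}+d_{3}$ and $d=e+d_{4}=d_{1}+d_{2}+d_{3}+d_{4}$, and note $e^{4}=0$, $d_{4}^{2}=0$, $d^{5}=0$. Because $eX$ commutes with $d_{4}X$, Proposition \ref{t2.2} gives $\exp\,dX=\exp\,eX.\exp\,d_{4}X$; likewise $\exp\,dY=\exp\,d_{4}Y.\exp\,eY$; and because $d_{4}X$ commutes with $d_{4}Y$, Proposition \ref{t2.2} again gives $\exp\,d_{4}X.\exp\,d_{4}Y=\exp\,d_{4}(X+Y)$. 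Hence the left-hand side of the theorem equals
\[
\exp\,eX.\exp\,d_{4}(X+Y).\exp\,eY ,
\]
a product of two ``finite'' exponentials with one infinitesimal exponential wedged between them. The strategy is to slide that middle factor to the left, collapse the two finite exponentials by Theorem \ref{t2.0}, and rewrite the whole thing as a single exponential, applying the logarithmic-derivative formulas (\ref{2.1}), (\ref{2.2}) and Theorem \ref{t2.1} at each move.

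Carrying this out, I would proceed in three moves. First, by (\ref{2.1}), $\exp\,eX.\exp\,d_{4}(X+Y)=\exp(eX+d_{4}W_{1})$ where $\delta^{\mathrm{left}}(\exp)(eX)(W_{1})=X+Y$; since $(\mathrm{ad}\,eX)^{4}=e^{4}(\mathrm{ad}\,X)^{4}=0$, Theorem \ref{t2.1} presents $\delta^{\mathrm{left}}(\exp)(eX)$ as a polynomial in $e\,\mathrm{ad}\,X$ of the shape $I+(\text{nilpotent})$, which is invertible, yielding $W_{1}=(X+Y)+\tfrac{e}{2}[X,Y]+\tfrac{e^{2}}{12}[X,[X,Y]]$. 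Second, by (\ref{2.2}), $\exp(eX+d_{4}W_{1})=\exp(d_{4}V).\exp\,eX$ where $V=\delta^{\mathrm{right}}(\exp)(eX)(W_{1})=\sum_{p=0}^{3}\tfrac{1}{(p+1)!}e^{p}(\mathrm{ad}\,X)^{p}W_{1}$; one computes $V=\sum_{p=0}^{3}\tfrac{e^{p}}{p!}(\mathrm{ad}\,X)^{p}(X+Y)$, that is, $V$ is the image of $X+Y$ under conjugation by $\exp\,eX$. Combining the two finite exponentials by Theorem \ref{t2.0}, $\exp\,eX.\exp\,eY=\exp\,\Phi$ with $\Phi=e(X+Y)+\tfrac{1}{2}e^{2}[X,Y]+\tfrac{1}{12}e^{3}[X-Y,[X,Y]]$. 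Third, by (\ref{2.2}) once more (legitimate because $\Phi$ has vanishing degree-zero part, so $(\mathrm{ad}\,\Phi)^{4}=0$), $\exp(d_{4}V).\exp\,\Phi=\exp(\Phi+d_{4}W_{2})$, where $W_{2}$ is obtained by inverting $\delta^{\mathrm{right}}(\exp)(\Phi)=\sum_{p=0}^{3}\tfrac{1}{(p+1)!}(\mathrm{ad}\,\Phi)^{p}$ and applying it to $V$, so that $W_{2}=V-\tfrac{1}{2}[\Phi,V]+\tfrac{1}{12}[\Phi,[\Phi,V]]$. Thus the left-hand side of the theorem is $\exp(\Phi+d_{4}W_{2})$.

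It remains to match $\Phi+d_{4}W_{2}$ with the Lie polynomial on the right of the theorem. Substituting $d=e+d_{4}$ there and using $d_{4}^{2}=0$, $e^{4}=0$ (so $d^{2}=e^{2}+2ed_{4}$, $d^{3}=e^{3}+3e^{2}d_{4}$, $d^{4}=4e^{3}d_{4}$), that polynomial splits as $\Phi+d_{4}\bigl((X+Y)+e[X,Y]+\tfrac{1}{4}e^{2}[[X,Y],Y-X]+\tfrac{1}{24}e^{3}\Omega\bigr)$, where $\Omega=[X+Y,[[X,Y],X+Y]]+[[[X,Y],X-Y],X-Y]$ and we have used $[[X,Y],Y-X]=[X-Y,[X,Y]]$. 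Its $d_{4}$-free part is already $\Phi$ — automatically, that being Theorem \ref{t2.0} with $e$ in place of $d$ — so the whole theorem reduces to the single identity
\[
W_{2}=(X+Y)+e[X,Y]+\tfrac{1}{4}e^{2}[[X,Y],Y-X]+\tfrac{1}{24}e^{3}\Omega ,
\]
which is established by inserting the formulas for $W_{1},V,\Phi$ into $W_{2}=V-\tfrac{1}{2}[\Phi,V]+\tfrac{1}{12}[\Phi,[\Phi,V]]$ and reducing modulo $e^{4}=0$ using antisymmetry and the Jacobi identity. The coefficients of $e^{0}$, $e^{1}$ and $e^{2}$ drop out after a short computation; the main obstacle is the coefficient of $e^{3}$, a genuine weight-$4$ bracket identity in the free Lie algebra on $X$ and $Y$ — precisely the computation mishandled in \cite{ni}, and the one we now carry out with Mathematica.
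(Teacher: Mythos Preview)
Your argument is correct, but it is organised quite differently from the paper's. The paper splits off $d_{4}$ on the \emph{outside}, writing
\[
\exp d_{4}X\cdot\bigl(\exp eX\cdot\exp eY\bigr)\cdot\exp d_{4}Y,
\]
applies Theorem~\ref{t2.0} to the middle, and then absorbs $\exp d_{4}Y$ on the right and $\exp d_{4}X$ on the left by repeatedly applying (\ref{2.1})--(\ref{2.2}) in the forward direction: each application of $\delta^{\mathrm{left}}$ (resp.\ $\delta^{\mathrm{right}}$) leaves a residue $n_{4k}$ (resp.\ $m_{4k}$) that must be pushed back out and re-fed in, three times on each side, until everything that remains has coefficient $d_{1}d_{2}d_{3}$ and can simply be added into the exponent. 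In effect the paper inverts $\delta^{\mathrm{left}}(\exp)$ and $\delta^{\mathrm{right}}(\exp)$ by a Neumann-series iteration carried out term by term in Mathematica.

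You instead put the $d_{4}$-factors on the \emph{inside}, collapse them to a single $\exp d_{4}(X+Y)$ (since $d_{4}^{2}=0$), conjugate this past $\exp eX$ using the identity $\delta^{\mathrm{right}}(\exp)(eX)\circ\bigl(\delta^{\mathrm{left}}(\exp)(eX)\bigr)^{-1}=e^{\mathrm{ad}\,eX}$, and then absorb the resulting $\exp d_{4}V$ into $\exp\Phi$ by inverting $\delta^{\mathrm{right}}(\exp)(\Phi)$ once, via the Bernoulli series $I-\tfrac{1}{2}\,\mathrm{ad}\,\Phi+\tfrac{1}{12}(\mathrm{ad}\,\Phi)^{2}$ (the $(\mathrm{ad}\,\Phi)^{3}$ term vanishes because $B_{3}=0$). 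This is cleaner: three closed-form moves in place of six iterative ones, and the appearance of $e^{\mathrm{ad}\,eX}(X+Y)$ makes the structure transparent. The paper's route, by contrast, never needs to invert anything and keeps every intermediate object as an explicit Lie polynomial, which is what its Mathematica implementation is built around. Both approaches land on the same weight-$4$ bracket identity at the $e^{3}$ coefficient, and both defer that to machine computation.
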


\begin{proof}
We have
\begin{align}
& \exp\;\left(  d_{1}+d_{2}+d_{3}+d_{4}\right)  X.\exp\;\left(  d_{1}%
+d_{2}+d_{3}+d_{4}\right)  Y\nonumber\\
& =\exp\;\left(  d_{1}+d_{2}+d_{3}\right)  X+d_{4}X.\exp\;\left(  d_{1}%
+d_{2}+d_{3}\right)  Y+d_{4}Y\nonumber\\
& =\exp\;d_{4}X.\exp\;\left(  d_{1}+d_{2}+d_{3}\right)  X.\exp\;\left(
d_{1}+d_{2}+d_{3}\right)  Y.\exp\;d_{4}Y\nonumber\\
& \left)  \text{By Proposition \ref{t2.2}}\right( \nonumber\\
& =\exp\;d_{4}X.\nonumber\\
& \exp\;\left(  \left(  d_{1}+d_{2}+d_{3}\right)  \left(  X+Y\right)
+\frac{1}{2}\left(  d_{1}+d_{2}+d_{3}\right)  ^{2}\left[  X,Y\right]
+\frac{1}{2}d_{1}d_{2}d_{3}\left[  \left[  X,Y\right]  ,Y-X\right]  \right)
.\nonumber\\
& \exp\;d_{4}Y\nonumber\\
& \left)  \text{By Theorem \ref{t2.0}}\right( \label{3.1f}%
\end{align}

By the way, due to Theorem \ref{t2.1}, we have
\begin{align}
& \delta^{\mathrm{left}}\left(  \exp\right)  \left(
\begin{array}
[c]{c}%
\left(  d_{1}+d_{2}+d_{3}\right)  \left(  X+Y\right)  +\frac{1}{2}\left(
d_{1}+d_{2}+d_{3}\right)  ^{2}\left[  X,Y\right] \\
+\frac{1}{2}d_{1}d_{2}d_{3}\left[  \left[  X,Y\right]  ,Y-X\right]
\end{array}
\right)  \left(  Y\right) \nonumber\\
& =-\frac{1}{2}d_{1}\left[  X,Y\right]  -\frac{1}{2}d_{2}\left[  X,Y\right]
+\frac{1}{3}d_{1}d_{2}\left[  X,\left[  X,Y\right]  \right]  +\frac{1}{3}%
d_{1}d_{2}\left[  Y,\left[  X,Y\right]  \right] \nonumber\\
& -\frac{1}{2}d_{1}d_{2}\left[  \left[  X,Y\right]  ,Y\right]  -\frac{1}%
{2}d_{3}\left[  X,Y\right]  +\frac{1}{3}d_{1}d_{3}\left[  X,\left[
X,Y\right]  \right]  +\frac{1}{3}d_{1}d_{3}\left[  Y,\left[  X,Y\right]
\right] \nonumber\\
& -\frac{1}{2}d_{1}d_{3}\left[  \left[  X,Y\right]  ,Y\right]  +\frac{1}%
{3}d_{2}d_{3}\left[  X,\left[  X,Y\right]  \right]  +\frac{1}{3}d_{2}%
d_{3}\left[  Y,\left[  X,Y\right]  \right] \nonumber\\
& -\frac{1}{2}d_{2}d_{3}\left[  \left[  X,Y\right]  ,Y\right]  -\frac{1}%
{4}d_{1}d_{2}d_{3}\left[  X,\left[  X,\left[  X,Y\right]  \right]  \right]
-\frac{1}{4}d_{1}d_{2}d_{3}\left[  X,\left[  Y,\left[  X,Y\right]  \right]
\right] \nonumber\\
& +\frac{1}{2}d_{1}d_{2}d_{3}\left[  X,\left[  \left[  X,Y\right]  ,Y\right]
\right]  -\frac{1}{4}d_{1}d_{2}d_{3}\left[  Y,\left[  X,\left[  X,Y\right]
\right]  \right] \nonumber\\
& -\frac{1}{4}d_{1}d_{2}d_{3}\left[  Y,\left[  Y,\left[  X,Y\right]  \right]
\right]  +\frac{1}{2}d_{1}d_{2}d_{3}\left[  Y,\left[  \left[  X,Y\right]
,Y\right]  \right] \nonumber\\
& +\frac{1}{4}d_{1}d_{2}d_{3}\left[  \left[  \left[  X,Y\right]  ,X\right]
,Y\right]  -\frac{1}{4}d_{1}d_{2}d_{3}\left[  \left[  \left[  X,Y\right]
,Y\right]  ,Y\right]  +Y\label{3.2}%
\end{align}

Letting $n_{41}$ be the right-hand side of (\ref{3.2}) with the last term $Y$
deleted, we have
\begin{align}
& \left(  \ref{3.1f}\right) \nonumber\\
& =\exp\;d_{4}X.\nonumber\\
& \exp\;\left(  \left(  d_{1}+d_{2}+d_{3}\right)  \left(  X+Y\right)
+\frac{1}{2}\left(  d_{1}+d_{2}+d_{3}\right)  ^{2}\left[  X,Y\right]
+\frac{1}{2}d_{1}d_{2}d_{3}\left[  \left[  X,Y\right]  ,Y-X\right]  \right)
.\nonumber\\
& \exp\;d_{4}\left(  n_{41}+Y\right)  .\exp\;-d_{4}n_{41}\nonumber\\
& \left)  \text{By Proposition \ref{t2.2}}\right( \nonumber\\
& =\exp\;d_{4}X.\nonumber\\
& \exp\;\left(  \left(  d_{1}+d_{2}+d_{3}\right)  \left(  X+Y\right)
+\frac{1}{2}\left(  d_{1}+d_{2}+d_{3}\right)  ^{2}\left[  X,Y\right]
+\frac{1}{2}d_{1}d_{2}d_{3}\left[  \left[  X,Y\right]  ,Y-X\right]  \right)
.\nonumber\\
& \left(  n_{41}+Y\right)  _{d_{4}}.\exp\;-d_{4}n_{41}\nonumber\\
& \left)  \text{By Proposition \ref{t2.3}}\right( \nonumber\\
& =\exp\;d_{4}X.\exp\;\left(
\begin{array}
[c]{c}%
\left(  d_{1}+d_{2}+d_{3}\right)  \left(  X+Y\right)  +\frac{1}{2}\left(
d_{1}+d_{2}+d_{3}\right)  ^{2}\left[  X,Y\right] \\
+\frac{1}{2}d_{1}d_{2}d_{3}\left[  \left[  X,Y\right]  ,Y-X\right]  +d_{4}Y
\end{array}
\right)  .\exp\;-d_{4}n_{41}\nonumber\\
& \left)  \text{By (\ref{3.2})}\right( \label{3.3}%
\end{align}

We let $i_{41}$ be the result of $n_{41}$ by deleting all the terms whose
coefficients contain $d_{1}d_{2}d_{3}$. Then, due to Theorem \ref{t2.1}, we
have
\begin{align}
& \delta^{\mathrm{left}}\left(  \exp\right)  \left(
\begin{array}
[c]{c}%
\left(  d_{1}+d_{2}+d_{3}\right)  \left(  X+Y\right)  +\frac{1}{2}\left(
d_{1}+d_{2}+d_{3}\right)  ^{2}\left[  X,Y\right] \\
+\frac{1}{2}d_{1}d_{2}d_{3}\left[  \left[  X,Y\right]  ,Y-X\right]  +d_{4}Y
\end{array}
\right)  \left(  i_{41}\right) \nonumber\\
& =\frac{1}{2}d_{1}\left[  X,Y\right]  +\frac{1}{2}d_{2}\left[  X,Y\right]
-\frac{5}{6}d_{1}d_{2}\left[  X,\left[  X,Y\right]  \right]  -\frac{5}{6}%
d_{1}d_{2}\left[  Y,\left[  X,Y\right]  \right] \nonumber\\
& +\frac{1}{2}d_{1}d_{2}\left[  \left[  X,Y\right]  ,Y\right]  +\frac{1}%
{2}d_{3}\left[  X,Y\right]  -\frac{5}{6}d_{1}d_{3}\left[  X,\left[
X,Y\right]  \right]  -\frac{5}{6}d_{1}d_{3}\left[  Y,\left[  X,Y\right]
\right] \nonumber\\
& +\frac{1}{2}d_{1}d_{3}\left[  \left[  X,Y\right]  ,Y\right]  -\frac{5}%
{6}d_{2}d_{3}\left[  X,\left[  X,Y\right]  \right]  -\frac{5}{6}d_{2}%
d_{3}\left[  Y,\left[  X,Y\right]  \right] \nonumber\\
& +\frac{1}{2}d_{2}d_{3}\left[  \left[  X,Y\right]  ,Y\right]  +d_{1}%
d_{2}d_{3}\left[  X,\left[  X,\left[  X,Y\right]  \right]  \right] \nonumber\\
& +d_{1}d_{2}d_{3}\left[  X,\left[  Y,\left[  X,Y\right]  \right]  \right]
-\frac{3}{4}d_{1}d_{2}d_{3}\left[  X,\left[  \left[  X,Y\right]  ,Y\right]
\right]  +d_{1}d_{2}d_{3}\left[  Y,\left[  X,\left[  X,Y\right]  \right]
\right] \nonumber\\
& +d_{1}d_{2}d_{3}\left[  Y,\left[  Y,\left[  X,Y\right]  \right]  \right]
-\frac{3}{4}d_{1}d_{2}d_{3}\left[  Y,\left[  \left[  X,Y\right]  ,Y\right]
\right] \label{3.4}%
\end{align}

Letting $n_{42}$ be the right-hand side of (\ref{3.4}), we have
\begin{align}
& \left(  \ref{3.3}\right) \nonumber\\
& =\exp\;d_{4}X.\exp\;\left(
\begin{array}
[c]{c}%
\left(  d_{1}+d_{2}+d_{3}\right)  \left(  X+Y\right)  +\frac{1}{2}\left(
d_{1}+d_{2}+d_{3}\right)  ^{2}\left[  X,Y\right] \\
+\frac{1}{2}d_{1}d_{2}d_{3}\left[  \left[  X,Y\right]  ,Y-X\right]  +d_{4}Y
\end{array}
\right)  .\nonumber\\
& \exp\;d_{4}n_{42}.\exp\;-d_{4}n_{42}-d_{4}n_{41}\nonumber\\
& \left)  \text{By Proposition \ref{t2.2}}\right( \nonumber\\
& =\exp\;d_{4}X.\exp\;\left(
\begin{array}
[c]{c}%
\left(  d_{1}+d_{2}+d_{3}\right)  \left(  X+Y\right)  +\frac{1}{2}\left(
d_{1}+d_{2}+d_{3}\right)  ^{2}\left[  X,Y\right] \\
+\frac{1}{2}d_{1}d_{2}d_{3}\left[  \left[  X,Y\right]  ,Y-X\right]  +d_{4}Y
\end{array}
\right)  .\nonumber\\
& \left(  n_{42}\right)  _{d_{4}}.\exp\;-d_{4}n_{42}-d_{4}n_{41}\nonumber\\
& \left)  \text{By Proposition \ref{t2.3}}\right( \nonumber\\
& =\exp\;d_{4}X.\exp\;\left(
\begin{array}
[c]{c}%
\left(  d_{1}+d_{2}+d_{3}\right)  \left(  X+Y\right)  +\frac{1}{2}\left(
d_{1}+d_{2}+d_{3}\right)  ^{2}\left[  X,Y\right] \\
+\frac{1}{2}d_{1}d_{2}d_{3}\left[  \left[  X,Y\right]  ,Y-X\right]
+d_{4}Y+d_{4}i_{41}%
\end{array}
\right)  .\nonumber\\
& \exp\;-d_{4}n_{42}-d_{4}n_{41}\nonumber\\
& \left)  \text{By (\ref{3.4})}\right( \label{3.5}%
\end{align}

We let $i_{42}$ be the result of $-n_{42}-n_{41}$ by deleting all the terms
whose coefficients contain $d_{1}d_{2}d_{3}$. Then, thanks to Theorem
\ref{t2.1}, we have
\begin{align}
& \delta^{\mathrm{left}}\left(  \exp\right)  \left(
\begin{array}
[c]{c}%
\left(  d_{1}+d_{2}+d_{3}\right)  \left(  X+Y\right)  +\frac{1}{2}\left(
d_{1}+d_{2}+d_{3}\right)  ^{2}\left[  X,Y\right] \\
+\frac{1}{2}d_{1}d_{2}d_{3}\left[  \left[  X,Y\right]  ,Y-X\right]
+d_{4}Y+d_{4}i_{41}%
\end{array}
\right)  \left(  i_{42}\right) \nonumber\\
& =\frac{1}{2}d_{1}d_{2}\left[  X,\left[  X,Y\right]  \right]  +\frac{1}%
{2}d_{1}d_{2}\left[  Y,\left[  X,Y\right]  \right]  +\frac{1}{2}d_{1}%
d_{3}\left[  X,\left[  X,Y\right]  \right] \nonumber\\
& +\frac{1}{2}d_{1}d_{3}\left[  Y,\left[  X,Y\right]  \right]  +\frac{1}%
{2}d_{2}d_{3}\left[  X,\left[  X,Y\right]  \right]  +\frac{1}{2}d_{2}%
d_{3}\left[  Y,\left[  X,Y\right]  \right] \nonumber\\
& -\frac{3}{4}d_{1}d_{2}d_{3}\left[  X,\left[  X,\left[  X,Y\right]  \right]
\right]  -\frac{3}{4}d_{1}d_{2}d_{3}\left[  X,\left[  Y,\left[  X,Y\right]
\right]  \right] \nonumber\\
& -\frac{3}{4}d_{1}d_{2}d_{3}\left[  Y,\left[  X,\left[  X,Y\right]  \right]
\right]  -\frac{3}{4}d_{1}d_{2}d_{3}\left[  Y,\left[  Y,\left[  X,Y\right]
\right]  \right] \label{3.6}%
\end{align}

Letting $n_{43}$ be the right-hand side of (\ref{3.6}), we have
\begin{align}
& \left(  \ref{3.5}\right) \nonumber\\
& =\exp\;d_{4}X.\exp\;\left(
\begin{array}
[c]{c}%
\left(  d_{1}+d_{2}+d_{3}\right)  \left(  X+Y\right)  +\frac{1}{2}\left(
d_{1}+d_{2}+d_{3}\right)  ^{2}\left[  X,Y\right] \\
+\frac{1}{2}d_{1}d_{2}d_{3}\left[  \left[  X,Y\right]  ,Y-X\right]
+d_{4}Y+d_{4}i_{41}%
\end{array}
\right)  .\nonumber\\
& \exp\;d_{4}n_{43}.\exp\;-d_{4}n_{43}-d_{4}n_{42}-d_{4}n_{41}\nonumber\\
& \left)  \text{By Proposition \ref{t2.2}}\right( \nonumber\\
& =\exp\;d_{4}X.\exp\;\left(
\begin{array}
[c]{c}%
\left(  d_{1}+d_{2}+d_{3}\right)  \left(  X+Y\right)  +\frac{1}{2}\left(
d_{1}+d_{2}+d_{3}\right)  ^{2}\left[  X,Y\right] \\
+\frac{1}{2}d_{1}d_{2}d_{3}\left[  \left[  X,Y\right]  ,Y-X\right]
+d_{4}Y+d_{4}i_{41}%
\end{array}
\right)  .\nonumber\\
& \left(  n_{43}\right)  _{d_{4}}.\exp\;-d_{4}n_{43}-d_{4}n_{42}-d_{4}%
n_{41}\nonumber\\
& \left)  \text{By Proposition \ref{t2.3}}\right( \nonumber\\
& =\exp\;d_{4}X.\exp\;\left(
\begin{array}
[c]{c}%
\left(  d_{1}+d_{2}+d_{3}\right)  \left(  X+Y\right)  +\frac{1}{2}\left(
d_{1}+d_{2}+d_{3}\right)  ^{2}\left[  X,Y\right] \\
+\frac{1}{2}d_{1}d_{2}d_{3}\left[  \left[  X,Y\right]  ,Y-X\right]
+d_{4}Y+d_{4}i_{41}+d_{4}i_{42}%
\end{array}
\right)  .\nonumber\\
& \exp\;-d_{4}n_{43}-d_{4}n_{42}-d_{4}n_{41}\nonumber\\
& \left)  \text{By (\ref{3.6})}\right( \label{3.7}%
\end{align}

Since the coefficient of every term in $-n_{43}-n_{42}-n_{41}$ contains
$d_{1}d_{2}d_{3}$, we now turn our attention to the left $\exp\;d_{4}X$. Now,
thanks to Theorem \ref{t2.1}, we have
\begin{align}
& \delta^{\mathrm{right}}\left(  \exp\right)  \left(
\begin{array}
[c]{c}%
\left(  d_{1}+d_{2}+d_{3}\right)  \left(  X+Y\right)  +\frac{1}{2}\left(
d_{1}+d_{2}+d_{3}\right)  ^{2}\left[  X,Y\right] \\
+\frac{1}{2}d_{1}d_{2}d_{3}\left[  \left[  X,Y\right]  ,Y-X\right]
+d_{4}Y+d_{4}i_{41}+d_{4}i_{42}%
\end{array}
\right)  \left(  X\right) \nonumber\\
& =-\frac{1}{2}d_{1}\left[  X,Y\right]  -\frac{1}{2}d_{2}\left[  X,Y\right]
-\frac{1}{3}d_{1}d_{2}\left[  X,\left[  X,Y\right]  \right]  -\frac{1}{3}%
d_{1}d_{2}\left[  Y,\left[  X,Y\right]  \right] \nonumber\\
& +\frac{1}{2}d_{1}d_{2}\left[  \left[  X,Y\right]  ,X\right]  -\frac{1}%
{2}d_{3}\left[  X,Y\right]  -\frac{1}{3}d_{1}d_{3}\left[  X,\left[
X,Y\right]  \right]  -\frac{1}{3}d_{1}d_{3}\left[  Y,\left[  X,Y\right]
\right] \nonumber\\
& +\frac{1}{2}d_{1}d_{3}\left[  \left[  X,Y\right]  ,X\right]  -\frac{1}%
{3}d_{2}d_{3}\left[  X,\left[  X,Y\right]  \right]  -\frac{1}{3}d_{2}%
d_{3}\left[  Y,\left[  X,Y\right]  \right] \nonumber\\
& +\frac{1}{2}d_{2}d_{3}\left[  \left[  X,Y\right]  ,X\right]  -\frac{1}%
{4}d_{1}d_{2}d_{3}\left[  X,\left[  X,\left[  X,Y\right]  \right]  \right]
-\frac{1}{4}d_{1}d_{2}d_{3}\left[  X,\left[  Y,\left[  X,Y\right]  \right]
\right] \nonumber\\
& +\frac{1}{2}d_{1}d_{2}d_{3}\left[  X,\left[  \left[  X,Y\right]  ,X\right]
\right]  -\frac{1}{4}d_{1}d_{2}d_{3}\left[  Y,\left[  X,\left[  X,Y\right]
\right]  \right] \nonumber\\
& -\frac{1}{4}d_{1}d_{2}d_{3}\left[  Y,\left[  Y,\left[  X,Y\right]  \right]
\right]  +\frac{1}{2}d_{1}d_{2}d_{3}\left[  Y,\left[  \left[  X,Y\right]
,X\right]  \right] \nonumber\\
& -\frac{1}{2}d_{1}d_{2}d_{3}\left[  \left[  X,Y\right]  ,\left[  X,Y\right]
\right]  -\frac{1}{4}d_{1}d_{2}d_{3}\left[  \left[  \left[  X,Y\right]
,X\right]  ,X\right] \nonumber\\
& +\frac{1}{4}d_{1}d_{2}d_{3}\left[  \left[  \left[  X,Y\right]  ,Y\right]
,X\right]  +X\label{3.8}%
\end{align}

We let $m_{41}$ be the right-hand side of (\ref{3.8}) with the last term
$X$\ deleted. Then we have
\begin{align}
& \left(  \ref{3.7}\right) \nonumber\\
& =\exp\;-d_{4}m_{41}.\exp\;d_{4}X+d_{4}m_{41}.\nonumber\\
& \exp\;\left(
\begin{array}
[c]{c}%
\left(  d_{1}+d_{2}+d_{3}\right)  \left(  X+Y\right)  +\frac{1}{2}\left(
d_{1}+d_{2}+d_{3}\right)  ^{2}\left[  X,Y\right] \\
+\frac{1}{2}d_{1}d_{2}d_{3}\left[  \left[  X,Y\right]  ,Y-X\right]
+d_{4}Y+d_{4}i_{41}+d_{4}i_{42}%
\end{array}
\right)  .\nonumber\\
& \exp\;-d_{4}n_{43}-d_{4}n_{42}-d_{4}n_{41}\nonumber\\
& \left)  \text{By Proposition \ref{t2.2}}\right( \nonumber\\
& =\exp\;-d_{4}m_{41}.\left(  X+m_{41}\right)  _{d_{4}}.\nonumber\\
& \exp\;\left(
\begin{array}
[c]{c}%
\left(  d_{1}+d_{2}+d_{3}\right)  \left(  X+Y\right)  +\frac{1}{2}\left(
d_{1}+d_{2}+d_{3}\right)  ^{2}\left[  X,Y\right] \\
+\frac{1}{2}d_{1}d_{2}d_{3}\left[  \left[  X,Y\right]  ,Y-X\right]
+d_{4}Y+d_{4}i_{41}+d_{4}i_{42}%
\end{array}
\right)  .\nonumber\\
& \exp\;-d_{4}n_{43}-d_{4}n_{42}-d_{4}n_{41}\nonumber\\
& \left)  \text{By Proposition \ref{t2.3}}\right( \nonumber\\
& =\exp\;-d_{4}m_{41}.\nonumber\\
& \exp\;\left(
\begin{array}
[c]{c}%
\left(  d_{1}+d_{2}+d_{3}+d_{4}\right)  \left(  X+Y\right)  +\frac{1}%
{2}\left(  d_{1}+d_{2}+d_{3}\right)  ^{2}\left[  X,Y\right] \\
+\frac{1}{2}d_{1}d_{2}d_{3}\left[  \left[  X,Y\right]  ,Y-X\right]
+d_{4}i_{41}+d_{4}i_{42}%
\end{array}
\right)  .\nonumber\\
& \exp\;-d_{4}n_{43}-d_{4}n_{42}-d_{4}n_{41}\nonumber\\
& \left)  \text{By (\ref{3.8})}\right( \label{3.9}%
\end{align}

We let $j_{41}$ be the result of $-m_{41}$ by deleting all the terms whose
coefficients contain $d_{1}d_{2}d_{3}$. Then, thanks to Theorem \ref{t2.1}, we
have
\begin{align}
& \delta^{\mathrm{right}}\left(  \exp\right)  \left(
\begin{array}
[c]{c}%
\left(  d_{1}+d_{2}+d_{3}+d_{4}\right)  \left(  X+Y\right)  +\frac{1}%
{2}\left(  d_{1}+d_{2}+d_{3}\right)  ^{2}\left[  X,Y\right] \\
+\frac{1}{2}d_{1}d_{2}d_{3}\left[  \left[  X,Y\right]  ,Y-X\right]
+d_{4}i_{41}+d_{4}i_{42}%
\end{array}
\right)  \left(  j_{41}\right) \nonumber\\
& =\frac{1}{2}d_{1}\left[  X,Y\right]  +\frac{1}{2}d_{2}\left[  X,Y\right]
+\frac{5}{6}d_{1}d_{2}\left[  X,\left[  X,Y\right]  \right]  +\frac{5}{6}%
d_{1}d_{2}\left[  Y,\left[  X,Y\right]  \right] \nonumber\\
& -\frac{1}{2}d_{1}d_{2}\left[  \left[  X,Y\right]  ,X\right]  +\frac{1}%
{2}d_{3}\left[  X,Y\right]  +\frac{5}{6}d_{1}d_{3}\left[  X,\left[
X,Y\right]  \right] \nonumber\\
& +\frac{5}{6}d_{1}d_{3}\left[  Y,\left[  X,Y\right]  \right]  -\frac{1}%
{2}d_{1}d_{3}\left[  \left[  X,Y\right]  ,X\right]  +\frac{5}{6}d_{2}%
d_{3}\left[  X,\left[  X,Y\right]  \right] \nonumber\\
& +\frac{5}{6}d_{2}d_{3}\left[  Y,\left[  X,Y\right]  \right]  -\frac{1}%
{2}d_{2}d_{3}\left[  \left[  X,Y\right]  ,X\right]  +d_{1}d_{2}d_{3}\left[
X,\left[  X,\left[  X,Y\right]  \right]  \right] \nonumber\\
& +d_{1}d_{2}d_{3}\left[  X,\left[  Y,\left[  X,Y\right]  \right]  \right]
-\frac{3}{4}d_{1}d_{2}d_{3}\left[  X,\left[  \left[  X,Y\right]  ,X\right]
\right] \nonumber\\
& +d_{1}d_{2}d_{3}\left[  Y,\left[  X,\left[  X,Y\right]  \right]  \right]
+d_{1}d_{2}d_{3}\left[  Y,\left[  Y,\left[  X,Y\right]  \right]  \right]
\nonumber\\
& -\frac{3}{4}d_{1}d_{2}d_{3}\left[  Y,\left[  \left[  X,Y\right]  ,X\right]
\right]  +\frac{3}{4}d_{1}d_{2}d_{3}\left[  \left[  X,Y\right]  ,\left[
X,Y\right]  \right] \label{3.10}%
\end{align}

Letting $m_{42}$ be the right-hand side of (\ref{3.10}), we have
\begin{align}
& \left(  \ref{3.9}\right) \nonumber\\
& =\exp\;-d_{4}m_{41}-d_{4}m_{42}.\exp\;d_{4}m_{42}.\nonumber\\
& \exp\;\left(
\begin{array}
[c]{c}%
\left(  d_{1}+d_{2}+d_{3}+d_{4}\right)  \left(  X+Y\right)  +\frac{1}%
{2}\left(  d_{1}+d_{2}+d_{3}\right)  ^{2}\left[  X,Y\right] \\
+\frac{1}{2}d_{1}d_{2}d_{3}\left[  \left[  X,Y\right]  ,Y-X\right]
+d_{4}i_{41}+d_{4}i_{42}%
\end{array}
\right)  .\nonumber\\
& \exp\;-d_{4}n_{43}-d_{4}n_{42}-d_{4}n_{41}\nonumber\\
& \left)  \text{By Proposition \ref{t2.2}}\right( \nonumber\\
& =\exp\;-d_{4}m_{41}-d_{4}m_{42}.\left(  m_{42}\right)  _{d_{4}}.\nonumber\\
& \exp\;\left(
\begin{array}
[c]{c}%
\left(  d_{1}+d_{2}+d_{3}+d_{4}\right)  \left(  X+Y\right)  +\frac{1}%
{2}\left(  d_{1}+d_{2}+d_{3}\right)  ^{2}\left[  X,Y\right] \\
+\frac{1}{2}d_{1}d_{2}d_{3}\left[  \left[  X,Y\right]  ,Y-X\right]
+d_{4}i_{41}+d_{4}i_{42}%
\end{array}
\right)  .\nonumber\\
& \exp\;-d_{4}n_{43}-d_{4}n_{42}-d_{4}n_{41}\nonumber\\
& \left)  \text{By Proposition \ref{t2.3}}\right( \nonumber\\
& =\exp\;-d_{4}m_{41}-d_{4}m_{42}.\nonumber\\
& \exp\;\left(
\begin{array}
[c]{c}%
\left(  d_{1}+d_{2}+d_{3}+d_{4}\right)  \left(  X+Y\right)  +\frac{1}%
{2}\left(  d_{1}+d_{2}+d_{3}\right)  ^{2}\left[  X,Y\right] \\
+\frac{1}{2}d_{1}d_{2}d_{3}\left[  \left[  X,Y\right]  ,Y-X\right]
+d_{4}i_{41}+d_{4}j_{41}+d_{4}i_{42}%
\end{array}
\right)  .\nonumber\\
& \exp\;-d_{4}n_{43}-d_{4}n_{42}-d_{4}n_{41}\nonumber\\
& \left)  \text{By (\ref{3.10})}\right( \label{3.11}%
\end{align}

We let $j_{42}$ be the result of $-m_{41}-m_{42}$ by deleting all the terms
whose coefficients contain $d_{1}d_{2}d_{3}$. Then, thanks to Theorem
\ref{t2.1}, we have
\begin{align}
& \delta^{\mathrm{right}}\left(  \exp\right)  \left(
\begin{array}
[c]{c}%
\left(  d_{1}+d_{2}+d_{3}+d_{4}\right)  \left(  X+Y\right)  +\frac{1}%
{2}\left(  d_{1}+d_{2}+d_{3}\right)  ^{2}\left[  X,Y\right] \\
+\frac{1}{2}d_{1}d_{2}d_{3}\left[  \left[  X,Y\right]  ,Y-X\right]
+d_{4}i_{41}+d_{4}j_{41}+d_{4}i_{42}%
\end{array}
\right)  \left(  j_{42}\right) \nonumber\\
& =-\frac{1}{2}d_{1}d_{2}\left[  X,\left[  X,Y\right]  \right]  -\frac{1}%
{2}d_{1}d_{2}\left[  Y,\left[  X,Y\right]  \right]  -\frac{1}{2}d_{1}%
d_{3}\left[  X,\left[  X,Y\right]  \right] \nonumber\\
& -\frac{1}{2}d_{1}d_{3}\left[  Y,\left[  X,Y\right]  \right]  -\frac{1}%
{2}d_{2}d_{3}\left[  X,\left[  X,Y\right]  \right]  -\frac{1}{2}d_{2}%
d_{3}\left[  Y,\left[  X,Y\right]  \right] \nonumber\\
& -\frac{3}{4}d_{1}d_{2}d_{3}\left[  X,\left[  X,\left[  X,Y\right]  \right]
\right]  -\frac{3}{4}d_{1}d_{2}d_{3}\left[  X,\left[  Y,\left[  X,Y\right]
\right]  \right] \nonumber\\
& -\frac{3}{4}d_{1}d_{2}d_{3}\left[  Y,\left[  X,\left[  X,Y\right]  \right]
\right]  -\frac{3}{4}d_{1}d_{2}d_{3}\left[  Y,\left[  Y,\left[  X,Y\right]
\right]  \right] \label{3.12}%
\end{align}

Letting $m_{43}$ be the right-hand side of (\ref{3.12}), we have
\begin{align}
& \left(  \ref{3.11}\right) \nonumber\\
& =\exp\;-d_{4}m_{41}-d_{4}m_{42}-d_{4}m_{43}.\exp\;d_{4}m_{43}.\nonumber\\
& \exp\;\left(
\begin{array}
[c]{c}%
\left(  d_{1}+d_{2}+d_{3}+d_{4}\right)  \left(  X+Y\right)  +\frac{1}%
{2}\left(  d_{1}+d_{2}+d_{3}\right)  ^{2}\left[  X,Y\right] \\
+\frac{1}{2}d_{1}d_{2}d_{3}\left[  \left[  X,Y\right]  ,Y-X\right]
+d_{4}i_{41}+d_{4}j_{41}+d_{4}i_{42}%
\end{array}
\right)  .\nonumber\\
& \exp\;-d_{4}n_{43}-d_{4}n_{42}-d_{4}n_{41}\nonumber\\
& \left)  \text{By Proposition \ref{t2.2}}\right( \nonumber\\
& =\exp\;-d_{4}m_{41}-d_{4}m_{42}-d_{4}m_{43}.\left(  m_{43}\right)  _{d_{4}%
}\nonumber\\
& \exp\;\left(
\begin{array}
[c]{c}%
\left(  d_{1}+d_{2}+d_{3}+d_{4}\right)  \left(  X+Y\right)  +\frac{1}%
{2}\left(  d_{1}+d_{2}+d_{3}\right)  ^{2}\left[  X,Y\right] \\
+\frac{1}{2}d_{1}d_{2}d_{3}\left[  \left[  X,Y\right]  ,Y-X\right]
+d_{4}i_{41}+d_{4}j_{41}+d_{4}i_{42}%
\end{array}
\right)  .\nonumber\\
& \exp\;-d_{4}n_{43}-d_{4}n_{42}-d_{4}n_{41}\nonumber\\
& \left)  \text{By Proposition \ref{t2.3}}\right( \nonumber\\
& =\exp\;-d_{4}m_{41}-d_{4}m_{42}-d_{4}m_{43}.\nonumber\\
& \exp\;\left(
\begin{array}
[c]{c}%
\left(  d_{1}+d_{2}+d_{3}+d_{4}\right)  \left(  X+Y\right)  +\frac{1}%
{2}\left(  d_{1}+d_{2}+d_{3}\right)  ^{2}\left[  X,Y\right] \\
+\frac{1}{2}d_{1}d_{2}d_{3}\left[  \left[  X,Y\right]  ,Y-X\right]
+d_{4}i_{41}+d_{4}j_{41}+d_{4}i_{42}+d_{4}j_{42}%
\end{array}
\right)  .\nonumber\\
& \exp\;-d_{4}n_{43}-d_{4}n_{42}-d_{4}n_{41}\nonumber\\
& \left)  \text{By (\ref{3.12})}\right( \label{3.13}%
\end{align}

Since the coefficient of every term in $-m_{41}-m_{42}-m_{43}$ contains
$d_{1}d_{2}d_{3}$, we are done, so that we have
\begin{align}
& \left(  \ref{3.13}\right) \nonumber\\
& =\exp\;\left(
\begin{array}
[c]{c}%
\left(  d_{1}+d_{2}+d_{3}+d_{4}\right)  \left(  X+Y\right)  +\frac{1}%
{2}\left(  d_{1}+d_{2}+d_{3}\right)  ^{2}\left[  X,Y\right] \\
+\frac{1}{2}d_{1}d_{2}d_{3}\left[  \left[  X,Y\right]  ,Y-X\right]
+d_{4}i_{41}+d_{4}j_{41}+d_{4}i_{42}+d_{4}j_{42}\\
-d_{4}m_{41}-d_{4}m_{42}-d_{4}m_{43}-d_{4}n_{43}-d_{4}n_{42}-d_{4}n_{41}%
\end{array}
\right) \label{3.14}%
\end{align}
It is easy to see that
\begin{align*}
& j_{41}+i_{42}+j_{42}\\
& =d_{1}\left[  X,Y\right]  +d_{2}\left[  X,Y\right]  -\frac{1}{2}d_{1}%
d_{2}\left[  \left[  X,Y\right]  ,X\right]  +\frac{1}{2}d_{1}d_{2}\left[
\left[  X,Y\right]  ,Y\right] \\
& d_{3}\left[  X,Y\right]  -\frac{1}{2}d_{1}d_{3}\left[  \left[  X,Y\right]
,X\right]  +\frac{1}{2}d_{1}d_{3}\left[  \left[  X,Y\right]  ,Y\right] \\
& -\frac{1}{2}d_{2}d_{3}\left[  \left[  X,Y\right]  ,X\right]  +\frac{1}%
{2}d_{2}d_{3}\left[  \left[  X,Y\right]  ,Y\right]
\end{align*}
whereas
\begin{align*}
& -m_{41}-m_{42}-m_{43}-n_{43}-n_{42}-n_{41}\\
& =\frac{1}{4}d_{1}d_{2}d_{3}\left[  X,\left[  \left[  X,Y\right]  ,X\right]
\right]  +\frac{1}{4}d_{1}d_{2}d_{3}\left[  X,\left[  \left[  X,Y\right]
,Y\right]  \right] \\
& +\frac{1}{4}d_{1}d_{2}d_{3}\left[  Y,\left[  \left[  X,Y\right]  ,X\right]
\right]  +\frac{1}{4}d_{1}d_{2}d_{3}\left[  Y,\left[  \left[  X,Y\right]
,Y\right]  \right] \\
& +\frac{1}{4}d_{1}d_{2}d_{3}\left[  \left[  \left[  X,Y\right]  ,X\right]
,X\right]  -\frac{1}{4}d_{1}d_{2}d_{3}\left[  \left[  \left[  X,Y\right]
,X\right]  ,Y\right] \\
& -\frac{1}{4}d_{1}d_{2}d_{3}\left[  \left[  \left[  X,Y\right]  ,Y\right]
,X\right]  +\frac{1}{4}d_{1}d_{2}d_{3}\left[  \left[  \left[  X,Y\right]
,Y\right]  ,Y\right]
\end{align*}
Therefore we have the desired result.
\end{proof}

\section{The BCH Formula for n=5}

\begin{theorem}
\label{t4.1}
\begin{align*}
& \exp\;\left(  d_{1}+d_{2}+d_{3}+d_{4}+d_{5}\right)  X.\exp\;\left(
d_{1}+d_{2}+d_{3}+d_{4}+d_{5}\right)  Y\\
& =\exp\;\left(  d_{1}+d_{2}+d_{3}+d_{4}+d_{5}\right)  \left(  X+Y\right) \\
& +\frac{1}{2}\left(  d_{1}+d_{2}+d_{3}+d_{4}+d_{5}\right)  ^{2}\left[
X,Y\right] \\
& +\frac{1}{12}\left(  d_{1}+d_{2}+d_{3}+d_{4}+d_{5}\right)  ^{3}\left[
\left[  X,Y\right]  ,Y-X\right] \\
& +\frac{1}{96}\left(  d_{1}+d_{2}+d_{3}+d_{4}+d_{5}\right)  ^{4}\left(
\left[  X+Y,\left[  \left[  X,Y\right]  ,X+Y\right]  \right]  +\left[  \left[
\left[  X,Y\right]  ,X-Y\right]  ,X-Y\right]  \right) \\
& +\frac{1}{120}\left(  d_{1}+d_{2}+d_{3}+d_{4}+d_{5}\right)  ^{5}\left(
\begin{array}
[c]{c}%
\frac{5}{6}\left[  X+Y,\left[  X+Y,\left[  \left[  X,Y\right]  ,X-Y\right]
\right]  \right] \\
+\frac{1}{2}\left[  \left[  X,Y\right]  ,\left[  \left[  X,Y\right]
,X+Y\right]  \right] \\
+\frac{1}{8}\left[  \left[  X+Y,\left[  \left[  X,Y\right]  ,X+Y\right]
\right]  ,Y-X\right] \\
+\frac{1}{8}\left[  \left[  \left[  \left[  X,Y\right]  ,Y-X\right]
,X-Y\right]  ,X-Y\right]
\end{array}
\right)
\end{align*}

\end{theorem}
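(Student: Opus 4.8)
The plan is to imitate the proof of Theorem~\ref{t3.1}, one level higher. First, since $\left[\left(d_{1}+\cdots+d_{4}\right)X,d_{5}X\right]$ and $\left[\left(d_{1}+\cdots+d_{4}\right)Y,d_{5}Y\right]$ vanish, Proposition~\ref{t2.2} gives
\begin{align*}
& \exp\left(d_{1}+\cdots+d_{5}\right)X.\exp\left(d_{1}+\cdots+d_{5}\right)Y\\
& =\exp d_{5}X.\exp\left(d_{1}+\cdots+d_{4}\right)X.\exp\left(d_{1}+\cdots+d_{4}\right)Y.\exp d_{5}Y ,
\end{align*}
and Theorem~\ref{t3.1} collapses the two middle factors into a single $\exp B_{4}$, where $B_{4}$ is the Lie polynomial in $X,Y$ with coefficients polynomial in $d_{1},\dots,d_{4}$ displayed on the right-hand side of Theorem~\ref{t3.1}. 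Here every power $\left(d_{1}+\cdots+d_{4}\right)^{k}$ collapses to a symmetric sum of squarefree monomials --- for instance $\left(d_{1}+\cdots+d_{4}\right)^{4}=24\,d_{1}d_{2}d_{3}d_{4}$ --- so $B_{4}$ has $d$-degree at most $4$ and its top-degree part is a multiple of $d_{1}d_{2}d_{3}d_{4}$.

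The core of the argument is to absorb the two outer factors $\exp d_{5}Y$ and $\exp d_{5}X$ into $\exp B_{4}$, exactly as $\exp d_{4}Y$ and $\exp d_{4}X$ were absorbed in the proof of Theorem~\ref{t3.1}. To absorb $\exp d_{5}Y$ on the right I would iterate the identity $\exp\left(A+d_{5}Z\right)=\exp A.\left(\delta^{\mathrm{left}}\left(\exp\right)\left(A\right)\left(Z\right)\right)_{d_{5}}$ together with Propositions~\ref{t2.2} and~\ref{t2.3} and the finite series for $\delta^{\mathrm{left}}\left(\exp\right)$ supplied by Theorem~\ref{t2.1}: at each round one more batch of terms of $\delta^{\mathrm{left}}\left(\exp\right)\left(A\right)\left(Y\right)$ is folded into the argument of the big exponential while a correction $\exp\left(-d_{5}n_{5k}\right)$ is pushed all the way to the right. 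Because multiplication by $d_{5}$ annihilates any monomial already containing $d_{5}$, and because applying $\mathrm{ad}$ of the current argument raises the $d$-degree in $d_{1},\dots,d_{4}$, after finitely many rounds --- a few more than in the $n=4$ case, since $B_{4}$ now reaches $d$-degree $4$ --- every surviving correction term has coefficient divisible by $d_{1}d_{2}d_{3}d_{4}$. The outer factor $\exp d_{5}X$ is absorbed symmetrically, using $\delta^{\mathrm{right}}\left(\exp\right)$, the companion identity $\exp\left(A+d_{5}Z\right)=\left(\delta^{\mathrm{right}}\left(\exp\right)\left(A\right)\left(Z\right)\right)_{d_{5}}.\exp A$, and auxiliary corrections $\exp\left(-d_{5}m_{5k}\right)$ accumulating on the left. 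At the end the two accumulated outer corrections have arguments that are multiples of $d_{1}d_{2}d_{3}d_{4}d_{5}$; by Proposition~\ref{t2.2} they commute with everything (any further bracket would repeat some $d_{i}$) and collapse into the single exponential.

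What is left is bookkeeping. One expands all $d$-monomials, rewrites every iterated bracket in a fixed basis of the free Lie algebra on $X,Y$ using antisymmetry and the Jacobi identity, and collects. The part of $d$-degree at most $4$ must reorganize --- via the same identities $\left(d_{1}+\cdots+d_{5}\right)^{k}=\left(d_{1}+\cdots+d_{4}\right)^{k}+k\,d_{5}\left(d_{1}+\cdots+d_{4}\right)^{k-1}$ (valid because $d_{5}^{2}=0$) that were used for $n=4$ --- into $\frac{1}{2}\left(d_{1}+\cdots+d_{5}\right)^{2}\left[X,Y\right]+\frac{1}{12}\left(d_{1}+\cdots+d_{5}\right)^{3}\left[\left[X,Y\right],Y-X\right]+\frac{1}{96}\left(d_{1}+\cdots+d_{5}\right)^{4}\left(\cdots\right)$, while the genuinely new contribution is carried by the monomial $d_{1}d_{2}d_{3}d_{4}d_{5}$, hence --- since $\left(d_{1}+\cdots+d_{5}\right)^{5}=120\,d_{1}d_{2}d_{3}d_{4}d_{5}$ --- packages as $\frac{1}{120}\left(d_{1}+\cdots+d_{5}\right)^{5}$ times a Lie polynomial that must coincide with the four-bracket expression in the statement. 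Conceptually $d_{1}+\cdots+d_{5}$ plays the role of a parameter $t$ with $t^{6}=0$, so one expects the whole right-hand side to be the Baker-Campbell-Hausdorff series evaluated at $tX,tY$ and truncated at order $t^{5}$; this pins down the target and gives an independent check, though within the present framework the coefficients must actually be produced by the iterative scheme above. I expect this last reorganization to be the main obstacle: the absorption steps throw off on the order of several dozen fifth-order bracket terms, and recognizing their sum as $\frac{5}{6}\left[X+Y,\left[X+Y,\left[\left[X,Y\right],X-Y\right]\right]\right]+\frac{1}{2}\left[\left[X,Y\right],\left[\left[X,Y\right],X+Y\right]\right]+\frac{1}{8}\left[\left[X+Y,\left[\left[X,Y\right],X+Y\right]\right],Y-X\right]+\frac{1}{8}\left[\left[\left[\left[X,Y\right],Y-X\right],X-Y\right],X-Y\right]$ requires a systematic, machine-checked normalization --- which is precisely why the whole computation is carried out in Mathematica. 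A lesser point to watch is the termination of each absorption loop, namely that once a correction term has coefficient divisible by $d_{1}d_{2}d_{3}d_{4}$ no later round can perturb it.
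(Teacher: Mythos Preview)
Your proposal is correct and follows essentially the same approach as the paper: split off $\exp d_{5}X$ and $\exp d_{5}Y$ via Proposition~\ref{t2.2}, apply Theorem~\ref{t3.1} to the middle, then iteratively absorb the outer factors using the left and right logarithmic derivatives (Theorem~\ref{t2.1}) together with Propositions~\ref{t2.2} and~\ref{t2.3}, accumulating corrections $n_{5k}$ on the right and $m_{5k}$ on the left until every surviving term carries the factor $d_{1}d_{2}d_{3}d_{4}$, and finally collapse and collect via Mathematica. The paper carries out exactly this scheme with four absorption rounds on each side ($n_{51},\dots,n_{54}$ and $m_{51},\dots,m_{54}$), matching your count ``a few more than in the $n=4$ case.''
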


\begin{proof}
We have
\begin{align}
& \exp\;\left(  d_{1}+d_{2}+d_{3}+d_{4}+d_{5}\right)  X.\exp\;\left(
d_{1}+d_{2}+d_{3}+d_{4}+d_{5}\right)  Y\nonumber\\
& =\exp\;\left(  d_{1}+d_{2}+d_{3}+d_{4}\right)  X+d_{5}X.\exp\;\left(
d_{1}+d_{2}+d_{3}+d_{4}\right)  Y+d_{5}Y\nonumber\\
& =\exp\;d_{5}X.\exp\;\left(  d_{1}+d_{2}+d_{3}+d_{4}\right)  X.\exp\;\left(
d_{1}+d_{2}+d_{3}+d_{4}\right)  Y.\exp\;d_{5}Y\nonumber\\
& \left)  \text{By Proposition \ref{t2.2}}\right(  \nonumber\\
& =\exp\;d_{5}X.\nonumber\\
& \exp\;\left(
\begin{array}
[c]{c}%
\left(  d_{1}+d_{2}+d_{3}+d_{4}\right)  \left(  X+Y\right)  +\frac{1}%
{2}\left(  d_{1}+d_{2}+d_{3}+d_{4}\right)  ^{2}\left[  X,Y\right]  \\
+\frac{1}{12}\left(  d_{1}+d_{2}+d_{3}+d_{4}\right)  ^{3}\left[  \left[
X,Y\right]  ,Y-X\right]  \\
+\frac{1}{96}\left(  d_{1}+d_{2}+d_{3}+d_{4}\right)  ^{4}\left(  \left[
X+Y,\left[  \left[  X,Y\right]  ,X+Y\right]  \right]  +\left[  \left[  \left[
X,Y\right]  ,X-Y\right]  ,X-Y\right]  \right)
\end{array}
\right)  .\nonumber\\
& \exp\;d_{5}Y\nonumber\\
& \left)  \text{By Theorem \ref{t3.1}}\right(  \label{4.1}%
\end{align}

By the way, due to Theorem \ref{t2.1}, we have
\begin{align*}
& \delta^{\mathrm{left}}\left(  \exp\right)  \left(
\begin{array}
[c]{c}%
\left(  d_{1}+d_{2}+d_{3}+d_{4}\right)  \left(  X+Y\right)  +\frac{1}%
{2}\left(  d_{1}+d_{2}+d_{3}+d_{4}\right)  ^{2}\left[  X,Y\right] \\
+\frac{1}{12}\left(  d_{1}+d_{2}+d_{3}+d_{4}\right)  ^{3}\left[  \left[
X,Y\right]  ,Y-X\right] \\
+\frac{1}{96}\left(  d_{1}+d_{2}+d_{3}+d_{4}\right)  ^{4}\left[  X+Y,\left[
\left[  X,Y\right]  ,X+Y\right]  \right] \\
+\frac{1}{96}\left(  d_{1}+d_{2}+d_{3}+d_{4}\right)  ^{4}\left[  \left[
\left[  X,Y\right]  ,X-Y\right]  ,X-Y\right]
\end{array}
\right)  \left(  Y\right) \\
& =Y-\frac{1}{2}d_{1}\left[  X,Y\right]  -\frac{1}{2}d_{2}\left[  X,Y\right]
+\frac{1}{3}d_{1}d_{2}\left[  X,\left[  X,Y\right]  \right]  +\frac{1}{3}%
d_{1}d_{2}\left[  Y,\left[  X,Y\right]  \right] \\
& -\frac{1}{2}d_{1}d_{2}\left[  \left[  X,Y\right]  ,Y\right]  -\frac{1}%
{2}d_{3}\left[  X,Y\right]  +\frac{1}{3}d_{1}d_{3}\left[  X,\left[
X,Y\right]  \right]  +\frac{1}{3}d_{1}d_{3}\left[  Y,\left[  X,Y\right]
\right] \\
& -\frac{1}{2}d_{1}d_{3}\left[  \left[  X,Y\right]  ,Y\right]  +\frac{1}%
{3}d_{2}d_{3}\left[  X,\left[  X,Y\right]  \right]  +\frac{1}{3}d_{2}%
d_{3}\left[  Y,\left[  X,Y\right]  \right]  -\frac{1}{2}d_{2}d_{3}\left[
\left[  X,Y\right]  ,Y\right] \\
& -\frac{1}{4}d_{1}d_{2}d_{3}\left[  X,\left[  X,\left[  X,Y\right]  \right]
\right]  -\frac{1}{4}d_{1}d_{2}d_{3}\left[  X,\left[  Y,\left[  X,Y\right]
\right]  \right]  +\frac{1}{2}d_{1}d_{2}d_{3}\left[  X,\left[  \left[
X,Y\right]  ,Y\right]  \right] \\
& -\frac{1}{4}d_{1}d_{2}d_{3}\left[  Y,\left[  X,\left[  X,Y\right]  \right]
\right]  -\frac{1}{4}d_{1}d_{2}d_{3}\left[  Y,\left[  Y,\left[  X,Y\right]
\right]  \right]  +\frac{1}{2}d_{1}d_{2}d_{3}\left[  Y,\left[  \left[
X,Y\right]  ,Y\right]  \right] \\
& +\frac{1}{4}d_{1}d_{2}d_{3}\left[  \left[  \left[  X,Y\right]  ,X\right]
,Y\right]  -\frac{1}{4}d_{1}d_{2}d_{3}\left[  \left[  \left[  X,Y\right]
,Y\right]  ,Y\right]  -\frac{1}{2}d_{4}\left[  X,Y\right]  +\frac{1}{3}%
d_{1}d_{4}\left[  X,\left[  X,Y\right]  \right] \\
& +\frac{1}{3}d_{1}d_{4}\left[  Y,\left[  X,Y\right]  \right]  -\frac{1}%
{2}d_{1}d_{4}\left[  \left[  X,Y\right]  ,Y\right]  +\frac{1}{3}d_{2}%
d_{4}\left[  X,\left[  X,Y\right]  \right]  +\frac{1}{3}d_{2}d_{4}\left[
Y,\left[  X,Y\right]  \right] \\
& -\frac{1}{2}d_{2}d_{4}\left[  \left[  X,Y\right]  ,Y\right]  -\frac{1}%
{4}d_{1}d_{2}d_{4}\left[  X,\left[  X,\left[  X,Y\right]  \right]  \right]
-\frac{1}{4}d_{1}d_{2}d_{4}\left[  X,\left[  Y,\left[  X,Y\right]  \right]
\right] \\
& +\frac{1}{2}d_{1}d_{2}d_{4}\left[  X,\left[  \left[  X,Y\right]  ,Y\right]
\right]  -\frac{1}{4}d_{1}d_{2}d_{4}\left[  Y,\left[  X,\left[  X,Y\right]
\right]  \right]  -\frac{1}{4}d_{1}d_{2}d_{4}\left[  Y,\left[  Y,\left[
X,Y\right]  \right]  \right] \\
& +\frac{1}{2}d_{1}d_{2}d_{4}\left[  Y,\left[  \left[  X,Y\right]  ,Y\right]
\right]  +\frac{1}{4}d_{1}d_{2}d_{4}\left[  \left[  \left[  X,Y\right]
,X\right]  ,Y\right]  -\frac{1}{4}d_{1}d_{2}d_{4}\left[  \left[  \left[
X,Y\right]  ,Y\right]  ,Y\right] \\
& +\frac{1}{3}d_{3}d_{4}\left[  X,\left[  X,Y\right]  \right]  +\frac{1}%
{3}d_{3}d_{4}\left[  Y,\left[  X,Y\right]  \right]  -\frac{1}{2}d_{3}%
d_{4}\left[  \left[  X,Y\right]  ,Y\right]  -\frac{1}{4}d_{1}d_{3}d_{4}\left[
X,\left[  X,\left[  X,Y\right]  \right]  \right] \\
& -\frac{1}{4}d_{1}d_{3}d_{4}\left[  X,\left[  Y,\left[  X,Y\right]  \right]
\right]  +\frac{1}{2}d_{1}d_{3}d_{4}\left[  X,\left[  \left[  X,Y\right]
,Y\right]  \right]  -\frac{1}{4}d_{1}d_{3}d_{4}\left[  Y,\left[  X,\left[
X,Y\right]  \right]  \right] \\
& -\frac{1}{4}d_{1}d_{3}d_{4}\left[  Y,\left[  Y,\left[  X,Y\right]  \right]
\right]  +\frac{1}{2}d_{1}d_{3}d_{4}\left[  Y,\left[  \left[  X,Y\right]
,Y\right]  \right]  +\frac{1}{4}d_{1}d_{3}d_{4}\left[  \left[  \left[
X,Y\right]  ,X\right]  ,Y\right] \\
& -\frac{1}{4}d_{1}d_{3}d_{4}\left[  \left[  \left[  X,Y\right]  ,Y\right]
,Y\right]  -\frac{1}{4}d_{2}d_{3}d_{4}\left[  X,\left[  X,\left[  X,Y\right]
\right]  \right]  -\frac{1}{4}d_{2}d_{3}d_{4}\left[  X,\left[  Y,\left[
X,Y\right]  \right]  \right] \\
& +\frac{1}{2}d_{2}d_{3}d_{4}\left[  X,\left[  \left[  X,Y\right]  ,Y\right]
\right]  -\frac{1}{4}d_{2}d_{3}d_{4}\left[  Y,\left[  X,\left[  X,Y\right]
\right]  \right]  -\frac{1}{4}d_{2}d_{3}d_{4}\left[  Y,\left[  Y,\left[
X,Y\right]  \right]  \right] \\
& +\frac{1}{2}d_{2}d_{3}d_{4}\left[  Y,\left[  \left[  X,Y\right]  ,Y\right]
\right]  +\frac{1}{4}d_{2}d_{3}d_{4}\left[  \left[  \left[  X,Y\right]
,X\right]  ,Y\right]  -\frac{1}{4}d_{2}d_{3}d_{4}\left[  \left[  \left[
X,Y\right]  ,Y\right]  ,Y\right] \\
& +\frac{1}{5}d_{1}d_{2}d_{3}d_{4}\left[  X,\left[  X,\left[  X,\left[
X,Y\right]  \right]  \right]  \right]  +\frac{1}{5}d_{1}d_{2}d_{3}d_{4}\left[
X,\left[  X,\left[  Y,\left[  X,Y\right]  \right]  \right]  \right] \\
& -\frac{1}{2}d_{1}d_{2}d_{3}d_{4}\left[  X,\left[  X,\left[  \left[
X,Y\right]  ,Y\right]  \right]  \right]  +\frac{1}{5}d_{1}d_{2}d_{3}%
d_{4}\left[  X,\left[  Y,\left[  X,\left[  X,Y\right]  \right]  \right]
\right] \\
& +\frac{1}{5}d_{1}d_{2}d_{3}d_{4}\left[  X,\left[  Y,\left[  Y,\left[
X,Y\right]  \right]  \right]  \right]  -\frac{1}{2}d_{1}d_{2}d_{3}d_{4}\left[
X,\left[  Y,\left[  \left[  X,Y\right]  ,Y\right]  \right]  \right] \\
& -\frac{1}{3}d_{1}d_{2}d_{3}d_{4}\left[  X,\left[  \left[  \left[
X,Y\right]  ,X\right]  ,Y\right]  \right]  +\frac{1}{3}d_{1}d_{2}d_{3}%
d_{4}\left[  X,\left[  \left[  \left[  X,Y\right]  ,Y\right]  ,Y\right]
\right]
\end{align*}
\begin{align}
& +\frac{1}{5}d_{1}d_{2}d_{3}d_{4}\left[  Y,\left[  X,\left[  X,\left[
X,Y\right]  \right]  \right]  \right]  +\frac{1}{5}d_{1}d_{2}d_{3}d_{4}\left[
Y,\left[  X,\left[  Y,\left[  X,Y\right]  \right]  \right]  \right]
\nonumber\\
& -\frac{1}{2}d_{1}d_{2}d_{3}d_{4}\left[  Y,\left[  X,\left[  \left[
X,Y\right]  ,Y\right]  \right]  \right]  +\frac{1}{5}d_{1}d_{2}d_{3}%
d_{4}\left[  Y,\left[  Y,\left[  X,\left[  X,Y\right]  \right]  \right]
\right] \nonumber\\
& +\frac{1}{5}d_{1}d_{2}d_{3}d_{4}\left[  Y,\left[  Y,\left[  Y,\left[
X,Y\right]  \right]  \right]  \right]  -\frac{1}{2}d_{1}d_{2}d_{3}d_{4}\left[
Y,\left[  Y,\left[  \left[  X,Y\right]  ,Y\right]  \right]  \right]
\nonumber\\
& -\frac{1}{3}d_{1}d_{2}d_{3}d_{4}\left[  Y,\left[  \left[  \left[
X,Y\right]  ,X\right]  ,Y\right]  \right]  +\frac{1}{3}d_{1}d_{2}d_{3}%
d_{4}\left[  Y,\left[  \left[  \left[  X,Y\right]  ,Y\right]  ,Y\right]
\right] \nonumber\\
& -\frac{1}{2}d_{1}d_{2}d_{3}d_{4}\left[  \left[  X,Y\right]  ,\left[
X,\left[  X,Y\right]  \right]  \right]  -\frac{1}{2}d_{1}d_{2}d_{3}%
d_{4}\left[  \left[  X,Y\right]  ,\left[  Y,\left[  X,Y\right]  \right]
\right] \nonumber\\
& +d_{1}d_{2}d_{3}d_{4}\left[  \left[  X,Y\right]  ,\left[  \left[
X,Y\right]  ,Y\right]  \right]  -\frac{1}{8}d_{1}d_{2}d_{3}d_{4}\left[
\left[  X,\left[  \left[  X,Y\right]  ,X\right]  \right]  ,Y\right]
\nonumber\\
& -\frac{1}{8}d_{1}d_{2}d_{3}d_{4}\left[  \left[  X,\left[  \left[
X,Y\right]  ,Y\right]  \right]  ,Y\right]  -\frac{1}{8}d_{1}d_{2}d_{3}%
d_{4}\left[  \left[  Y,\left[  \left[  X,Y\right]  ,X\right]  \right]
,Y\right] \nonumber\\
& -\frac{1}{8}d_{1}d_{2}d_{3}d_{4}\left[  \left[  Y,\left[  \left[
X,Y\right]  ,Y\right]  \right]  ,Y\right]  -\frac{1}{3}d_{1}d_{2}d_{3}%
d_{4}\left[  \left[  \left[  X,Y\right]  ,X\right]  ,\left[  X,Y\right]
\right] \nonumber\\
& +\frac{1}{3}d_{1}d_{2}d_{3}d_{4}\left[  \left[  \left[  X,Y\right]
,Y\right]  ,\left[  X,Y\right]  \right]  -\frac{1}{8}d_{1}d_{2}d_{3}%
d_{4}\left[  \left[  \left[  \left[  X,Y\right]  ,X\right]  ,X\right]
,Y\right] \nonumber\\
& +\frac{1}{8}d_{1}d_{2}d_{3}d_{4}\left[  \left[  \left[  \left[  X,Y\right]
,X\right]  ,Y\right]  ,Y\right]  +\frac{1}{8}d_{1}d_{2}d_{3}d_{4}\left[
\left[  \left[  \left[  X,Y\right]  ,Y\right]  ,X\right]  ,Y\right]
\nonumber\\
& -\frac{1}{8}d_{1}d_{2}d_{3}d_{4}\left[  \left[  \left[  \left[  X,Y\right]
,Y\right]  ,Y\right]  ,Y\right] \label{4.2}%
\end{align}

Letting $n_{51}$ be the right-hand side of (\ref{4.2}) with the first term $Y
$ deleted, we have
\begin{align}
& \left(  \ref{4.1}\right) \nonumber\\
& =\exp\;d_{5}X.\nonumber\\
& \exp\;\left(
\begin{array}
[c]{c}%
\left(  d_{1}+d_{2}+d_{3}+d_{4}\right)  \left(  X+Y\right)  +\frac{1}%
{2}\left(  d_{1}+d_{2}+d_{3}+d_{4}\right)  ^{2}\left[  X,Y\right] \\
+\frac{1}{12}\left(  d_{1}+d_{2}+d_{3}+d_{4}\right)  ^{3}\left[  \left[
X,Y\right]  ,Y-X\right] \\
+\frac{1}{96}\left(  d_{1}+d_{2}+d_{3}+d_{4}\right)  ^{4}\left(  \left[
X+Y,\left[  \left[  X,Y\right]  ,X+Y\right]  \right]  +\left[  \left[  \left[
X,Y\right]  ,X-Y\right]  ,X-Y\right]  \right)
\end{array}
\right)  .\nonumber\\
& \exp\;d_{5}X+d_{5}n_{51}.\exp\;-d_{5}n_{51}\nonumber\\
& \left)  \text{By Proposition \ref{t2.2}}\right( \nonumber\\
& =\exp\;d_{5}X.\nonumber\\
& \exp\;\left(
\begin{array}
[c]{c}%
\left(  d_{1}+d_{2}+d_{3}+d_{4}\right)  \left(  X+Y\right)  +d_{5}Y+\frac
{1}{2}\left(  d_{1}+d_{2}+d_{3}+d_{4}\right)  ^{2}\left[  X,Y\right] \\
+\frac{1}{12}\left(  d_{1}+d_{2}+d_{3}+d_{4}\right)  ^{3}\left[  \left[
X,Y\right]  ,Y-X\right] \\
+\frac{1}{96}\left(  d_{1}+d_{2}+d_{3}+d_{4}\right)  ^{4}\left(  \left[
X+Y,\left[  \left[  X,Y\right]  ,X+Y\right]  \right]  +\left[  \left[  \left[
X,Y\right]  ,X-Y\right]  ,X-Y\right]  \right)
\end{array}
\right)  .\nonumber\\
& \exp\;-d_{5}n_{51}\nonumber\\
& \left)  \text{By (\ref{4.2})}\right( \label{4.3}%
\end{align}

We let $i_{51}$ be the result of $-n_{51}$ by deleting all the terms whose
coefficients contain $d_{1}d_{2}d_{3}d_{4}$. Then, by dint of Theorem
\ref{t2.1}, we have
\begin{align*}
& \delta^{\mathrm{left}}\left(  \exp\right)  \left(
\begin{array}
[c]{c}%
\left(  d_{1}+d_{2}+d_{3}+d_{4}\right)  \left(  X+Y\right)  +d_{5}Y+\frac
{1}{2}\left(  d_{1}+d_{2}+d_{3}+d_{4}\right)  ^{2}\left[  X,Y\right] \\
+\frac{1}{12}\left(  d_{1}+d_{2}+d_{3}+d_{4}\right)  ^{3}\left[  \left[
X,Y\right]  ,Y-X\right] \\
+\frac{1}{96}\left(  d_{1}+d_{2}+d_{3}+d_{4}\right)  ^{4}\left[  X+Y,\left[
\left[  X,Y\right]  ,X+Y\right]  \right] \\
+\frac{1}{96}\left(  d_{1}+d_{2}+d_{3}+d_{4}\right)  ^{4}\left[  \left[
\left[  X,Y\right]  ,X-Y\right]  ,X-Y\right]
\end{array}
\right)  \left(  i_{51}\right) \\
& =\frac{1}{2}d_{1}\left[  X,Y\right]  +\frac{1}{2}d_{2}\left[  X,Y\right]
-\frac{5}{6}d_{1}d_{2}\left[  X,\left[  X,Y\right]  \right]  -\frac{5}{6}%
d_{1}d_{2}\left[  Y,\left[  X,Y\right]  \right]  +\frac{1}{2}d_{1}d_{2}\left[
\left[  X,Y\right]  ,Y\right] \\
& +\frac{1}{2}d_{3}\left[  X,Y\right]  -\frac{5}{6}d_{1}d_{3}\left[  X,\left[
X,Y\right]  \right]  -\frac{5}{6}d_{1}d_{3}\left[  Y,\left[  X,Y\right]
\right]  +\frac{1}{2}d_{1}d_{3}\left[  \left[  X,Y\right]  ,Y\right] \\
& -\frac{5}{6}d_{2}d_{3}\left[  X,\left[  X,Y\right]  \right]  -\frac{5}%
{6}d_{2}d_{3}\left[  Y,\left[  X,Y\right]  \right]  +\frac{1}{2}d_{2}%
d_{3}\left[  \left[  X,Y\right]  ,Y\right]  +\frac{5}{4}d_{1}d_{2}d_{3}\left[
X,\left[  X,\left[  X,Y\right]  \right]  \right] \\
& +\frac{5}{4}d_{1}d_{2}d_{3}\left[  X,\left[  Y,\left[  X,Y\right]  \right]
\right]  -\frac{5}{4}d_{1}d_{2}d_{3}\left[  X,\left[  \left[  X,Y\right]
,Y\right]  \right]  +\frac{5}{4}d_{1}d_{2}d_{3}\left[  Y,\left[  X,\left[
X,Y\right]  \right]  \right] \\
& +\frac{5}{4}d_{1}d_{2}d_{3}\left[  Y,\left[  Y,\left[  X,Y\right]  \right]
\right]  -\frac{5}{4}d_{1}d_{2}d_{3}\left[  Y,\left[  \left[  X,Y\right]
,Y\right]  \right]  -\frac{1}{4}d_{1}d_{2}d_{3}\left[  \left[  \left[
X,Y\right]  ,X\right]  ,Y\right] \\
& +\frac{1}{4}d_{1}d_{2}d_{3}\left[  \left[  \left[  X,Y\right]  ,Y\right]
,Y\right]  +\frac{1}{2}d_{4}\left[  X,Y\right]  -\frac{5}{6}d_{1}d_{4}\left[
X,\left[  X,Y\right]  \right]  -\frac{5}{6}d_{1}d_{4}\left[  Y,\left[
X,Y\right]  \right] \\
& +\frac{1}{2}d_{1}d_{4}\left[  \left[  X,Y\right]  ,Y\right]  -\frac{5}%
{6}d_{2}d_{4}\left[  X,\left[  X,Y\right]  \right]  -\frac{5}{6}d_{2}%
d_{4}\left[  Y,\left[  X,Y\right]  \right]  +\frac{1}{2}d_{2}d_{4}\left[
\left[  X,Y\right]  ,Y\right] \\
& +\frac{5}{4}d_{1}d_{2}d_{4}\left[  X,\left[  X,\left[  X,Y\right]  \right]
\right]  +\frac{5}{4}d_{1}d_{2}d_{4}\left[  X,\left[  Y,\left[  X,Y\right]
\right]  \right]  -\frac{5}{4}d_{1}d_{2}d_{4}\left[  X,\left[  \left[
X,Y\right]  ,Y\right]  \right] \\
& +\frac{5}{4}d_{1}d_{2}d_{4}\left[  Y,\left[  X,\left[  X,Y\right]  \right]
\right]  +\frac{5}{4}d_{1}d_{2}d_{4}\left[  Y,\left[  Y,\left[  X,Y\right]
\right]  \right]  -\frac{5}{4}d_{1}d_{2}d_{4}\left[  Y,\left[  \left[
X,Y\right]  ,Y\right]  \right] \\
& -\frac{1}{4}d_{1}d_{2}d_{4}\left[  \left[  \left[  X,Y\right]  ,X\right]
,Y\right]  +\frac{1}{4}d_{1}d_{2}d_{4}\left[  \left[  \left[  X,Y\right]
,Y\right]  ,Y\right]  -\frac{5}{6}d_{3}d_{4}\left[  X,\left[  X,Y\right]
\right] \\
& -\frac{5}{6}d_{3}d_{4}\left[  Y,\left[  X,Y\right]  \right]  +\frac{1}%
{2}d_{3}d_{4}\left[  \left[  X,Y\right]  ,X\right]  +\frac{5}{4}d_{1}%
d_{3}d_{4}\left[  X,\left[  X,\left[  X,Y\right]  \right]  \right] \\
& +\frac{5}{4}d_{1}d_{3}d_{4}\left[  X,\left[  Y,\left[  X,Y\right]  \right]
\right]  -\frac{5}{4}d_{1}d_{3}d_{4}\left[  X,\left[  \left[  X,Y\right]
,Y\right]  \right]  +\frac{5}{4}d_{1}d_{3}d_{4}\left[  Y,\left[  X,\left[
X,Y\right]  \right]  \right] \\
& +\frac{5}{4}d_{1}d_{3}d_{4}\left[  Y,\left[  Y,\left[  X,Y\right]  \right]
\right]  -\frac{5}{4}d_{1}d_{3}d_{4}\left[  Y,\left[  \left[  X,Y\right]
,Y\right]  \right]  -\frac{1}{4}d_{1}d_{3}d_{4}\left[  \left[  \left[
X,Y\right]  ,X\right]  ,Y\right] \\
& +\frac{1}{4}d_{1}d_{3}d_{4}\left[  \left[  \left[  X,Y\right]  ,Y\right]
,Y\right]  +\frac{5}{4}d_{2}d_{3}d_{4}\left[  X,\left[  X,\left[  X,Y\right]
\right]  \right]  +\frac{5}{4}d_{2}d_{3}d_{4}\left[  X,\left[  Y,\left[
X,Y\right]  \right]  \right] \\
& -\frac{5}{4}d_{2}d_{3}d_{4}\left[  X,\left[  \left[  X,Y\right]  ,Y\right]
\right]  +\frac{5}{4}d_{2}d_{3}d_{4}\left[  Y,\left[  X,\left[  X,Y\right]
\right]  \right]  +\frac{5}{4}d_{2}d_{3}d_{4}\left[  Y,\left[  Y,\left[
X,Y\right]  \right]  \right] \\
& -\frac{5}{4}d_{2}d_{3}d_{4}\left[  Y,\left[  \left[  X,Y\right]  ,Y\right]
\right]  -\frac{1}{4}d_{2}d_{3}d_{4}\left[  \left[  \left[  X,Y\right]
,X\right]  ,Y\right]  +\frac{1}{4}d_{2}d_{3}d_{4}\left[  \left[  \left[
X,Y\right]  ,Y\right]  ,Y\right] \\
& -\frac{5}{3}d_{1}d_{2}d_{3}d_{4}\left[  X,\left[  X,\left[  X,\left[
X,Y\right]  \right]  \right]  \right]  -\frac{5}{3}d_{1}d_{2}d_{3}d_{4}\left[
X,\left[  X,\left[  Y,\left[  X,Y\right]  \right]  \right]  \right] \\
& +2d_{1}d_{2}d_{3}d_{4}\left[  X,\left[  X,\left[  \left[  X,Y\right]
,Y\right]  \right]  \right]  -\frac{5}{3}d_{1}d_{2}d_{3}d_{4}\left[  X,\left[
Y,\left[  X,\left[  X,Y\right]  \right]  \right]  \right] \\
& -\frac{5}{3}d_{1}d_{2}d_{3}d_{4}\left[  X,\left[  Y,\left[  Y,\left[
X,Y\right]  \right]  \right]  \right]  +2d_{1}d_{2}d_{3}d_{4}\left[  X,\left[
Y,\left[  \left[  X,Y\right]  ,Y\right]  \right]  \right] \\
& +\frac{1}{2}d_{1}d_{2}d_{3}d_{4}\left[  X,\left[  \left[  \left[
X,Y\right]  ,X\right]  ,Y\right]  \right]  -\frac{1}{2}d_{1}d_{2}d_{3}%
d_{4}\left[  X,\left[  \left[  \left[  X,Y\right]  ,Y\right]  ,Y\right]
\right]
\end{align*}
\begin{align}
& -\frac{5}{3}d_{1}d_{2}d_{3}d_{4}\left[  Y,\left[  X,\left[  X,\left[
X,Y\right]  \right]  \right]  \right]  -\frac{5}{3}d_{1}d_{2}d_{3}d_{4}\left[
Y,\left[  X,\left[  Y,\left[  X,Y\right]  \right]  \right]  \right]
\nonumber\\
& +2d_{1}d_{2}d_{3}d_{4}\left[  Y,\left[  X,\left[  \left[  X,Y\right]
,Y\right]  \right]  \right]  -\frac{5}{3}d_{1}d_{2}d_{3}d_{4}\left[  Y,\left[
Y,\left[  X,\left[  X,Y\right]  \right]  \right]  \right] \nonumber\\
& -\frac{5}{3}d_{1}d_{2}d_{3}d_{4}\left[  Y,\left[  Y,\left[  Y,\left[
X,Y\right]  \right]  \right]  \right]  +2d_{1}d_{2}d_{3}d_{4}\left[  Y,\left[
Y,\left[  \left[  X,Y\right]  ,Y\right]  \right]  \right] \nonumber\\
& +\frac{1}{2}d_{1}d_{2}d_{3}d_{4}\left[  Y,\left[  \left[  \left[
X,Y\right]  ,X\right]  ,Y\right]  \right]  -\frac{1}{2}d_{1}d_{2}d_{3}%
d_{4}\left[  Y,\left[  \left[  \left[  X,Y\right]  ,Y\right]  ,Y\right]
\right] \nonumber\\
& +2d_{1}d_{2}d_{3}d_{4}\left[  \left[  X,Y\right]  ,\left[  X,\left[
X,Y\right]  \right]  \right]  +2d_{1}d_{2}d_{3}d_{4}\left[  \left[
X,Y\right]  ,\left[  Y,\left[  X,Y\right]  \right]  \right] \nonumber\\
& -\frac{3}{2}d_{1}d_{2}d_{3}d_{4}\left[  \left[  X,Y\right]  ,\left[  \left[
X,Y\right]  ,Y\right]  \right]  +\frac{1}{2}d_{1}d_{2}d_{3}d_{4}\left[
\left[  \left[  X,Y\right]  ,X\right]  ,\left[  X,Y\right]  \right]
\nonumber\\
& -\frac{1}{2}d_{1}d_{2}d_{3}d_{4}\left[  \left[  \left[  X,Y\right]
,Y\right]  ,\left[  X,Y\right]  \right] \label{4.4}%
\end{align}

Letting $n_{52}$ be the right-hand side of (\ref{4.4}), we have
\begin{align}
& \left(  \ref{4.3}\right) \nonumber\\
& =\exp\;d_{5}X.\nonumber\\
& \exp\;\left(
\begin{array}
[c]{c}%
\left(  d_{1}+d_{2}+d_{3}+d_{4}\right)  \left(  X+Y\right)  +d_{5}Y+\frac
{1}{2}\left(  d_{1}+d_{2}+d_{3}+d_{4}\right)  ^{2}\left[  X,Y\right] \\
+\frac{1}{12}\left(  d_{1}+d_{2}+d_{3}+d_{4}\right)  ^{3}\left[  \left[
X,Y\right]  ,Y-X\right] \\
+\frac{1}{96}\left(  d_{1}+d_{2}+d_{3}+d_{4}\right)  ^{4}\left(  \left[
X+Y,\left[  \left[  X,Y\right]  ,X+Y\right]  \right]  +\left[  \left[  \left[
X,Y\right]  ,X-Y\right]  ,X-Y\right]  \right)
\end{array}
\right)  .\nonumber\\
& \exp\;d_{5}n_{52}.\exp\;-d_{5}n_{51}-d_{5}n_{52}\nonumber\\
& \left)  \text{By Proposition \ref{t2.2}}\right( \nonumber\\
& =\exp\;d_{5}X.\nonumber\\
& \exp\;\left(
\begin{array}
[c]{c}%
\left(  d_{1}+d_{2}+d_{3}+d_{4}\right)  \left(  X+Y\right)  +d_{5}Y+\frac
{1}{2}\left(  d_{1}+d_{2}+d_{3}+d_{4}\right)  ^{2}\left[  X,Y\right] \\
+\frac{1}{12}\left(  d_{1}+d_{2}+d_{3}+d_{4}\right)  ^{3}\left[  \left[
X,Y\right]  ,Y-X\right] \\
+\frac{1}{96}\left(  d_{1}+d_{2}+d_{3}+d_{4}\right)  ^{4}\left(  \left[
X+Y,\left[  \left[  X,Y\right]  ,X+Y\right]  \right]  +\left[  \left[  \left[
X,Y\right]  ,X-Y\right]  ,X-Y\right]  \right)
\end{array}
\right)  .\nonumber\\
& \left(  n_{52}\right)  _{d_{5}}.\exp\;-d_{5}n_{51}-d_{5}n_{52}\nonumber\\
& \left)  \text{By Proposition \ref{t2.3}}\right( \nonumber\\
& =\exp\;d_{5}X.\nonumber\\
& \exp\;\left(
\begin{array}
[c]{c}%
\left(  d_{1}+d_{2}+d_{3}+d_{4}\right)  \left(  X+Y\right)  +d_{5}Y+\frac
{1}{2}\left(  d_{1}+d_{2}+d_{3}+d_{4}\right)  ^{2}\left[  X,Y\right] \\
+\frac{1}{12}\left(  d_{1}+d_{2}+d_{3}+d_{4}\right)  ^{3}\left[  \left[
X,Y\right]  ,Y-X\right] \\
+\frac{1}{96}\left(  d_{1}+d_{2}+d_{3}+d_{4}\right)  ^{4}\left(  \left[
X+Y,\left[  \left[  X,Y\right]  ,X+Y\right]  \right]  +\left[  \left[  \left[
X,Y\right]  ,X-Y\right]  ,X-Y\right]  \right) \\
+d_{5}i_{51}%
\end{array}
\right)  .\nonumber\\
& \exp\;-d_{5}n_{51}-d_{5}n_{52}\nonumber\\
& \left)  \text{By (\ref{4.4})}\right( \label{4.5}%
\end{align}

We let $i_{52}$ be the result of $-n_{51}-n_{52}$ by deleting all the terms
whose coefficients contain $d_{1}d_{2}d_{3}d_{4}$. Then, thanks to Theorem
\ref{t2.1}, we have
\begin{align}
& \delta^{\mathrm{left}}\left(  \exp\right)  \left(
\begin{array}
[c]{c}%
\left(  d_{1}+d_{2}+d_{3}+d_{4}\right)  \left(  X+Y\right)  +d_{5}Y+\frac
{1}{2}\left(  d_{1}+d_{2}+d_{3}+d_{4}\right)  ^{2}\left[  X,Y\right] \\
+\frac{1}{12}\left(  d_{1}+d_{2}+d_{3}+d_{4}\right)  ^{3}\left[  \left[
X,Y\right]  ,Y-X\right] \\
+\frac{1}{96}\left(  d_{1}+d_{2}+d_{3}+d_{4}\right)  ^{4}\left[  X+Y,\left[
\left[  X,Y\right]  ,X+Y\right]  \right] \\
+\frac{1}{96}\left(  d_{1}+d_{2}+d_{3}+d_{4}\right)  ^{4}\left[  \left[
\left[  X,Y\right]  ,X-Y\right]  ,X-Y\right]  +d_{5}i_{51}%
\end{array}
\right)  \left(  i_{52}\right) \nonumber\\
& =\frac{1}{2}d_{1}d_{2}\left[  X,\left[  X,Y\right]  \right]  +\frac{1}%
{2}d_{1}d_{2}\left[  Y,\left[  X,Y\right]  \right]  +\frac{1}{2}d_{1}%
d_{3}\left[  X,\left[  X,Y\right]  \right]  +\frac{1}{2}d_{1}d_{3}\left[
Y,\left[  X,Y\right]  \right] \nonumber\\
& +\frac{1}{2}d_{2}d_{3}\left[  X,\left[  X,Y\right]  \right]  +\frac{1}%
{2}d_{2}d_{3}\left[  Y,\left[  X,Y\right]  \right]  -\frac{7}{4}d_{1}%
d_{2}d_{3}\left[  X,\left[  X,\left[  X,Y\right]  \right]  \right] \nonumber\\
& -\frac{7}{4}d_{1}d_{2}d_{3}\left[  X,\left[  Y,\left[  X,Y\right]  \right]
\right]  +\frac{3}{4}d_{1}d_{2}d_{3}\left[  X,\left[  \left[  X,Y\right]
,Y\right]  \right]  -\frac{7}{4}d_{1}d_{2}d_{3}\left[  Y,\left[  X,\left[
X,Y\right]  \right]  \right] \nonumber\\
& -\frac{7}{4}d_{1}d_{2}d_{3}\left[  Y,\left[  Y,\left[  X,Y\right]  \right]
\right]  +\frac{3}{4}d_{1}d_{2}d_{3}\left[  Y,\left[  \left[  X,Y\right]
,Y\right]  \right]  +\frac{1}{2}d_{1}d_{4}\left[  X,\left[  X,Y\right]
\right] \nonumber\\
& +\frac{1}{2}d_{1}d_{4}\left[  Y,\left[  X,Y\right]  \right]  +\frac{1}%
{2}d_{2}d_{4}\left[  X,\left[  X,Y\right]  \right]  +\frac{1}{2}d_{2}%
d_{4}\left[  Y,\left[  X,Y\right]  \right]  -\frac{7}{4}d_{1}d_{2}d_{4}\left[
X,\left[  X,\left[  X,Y\right]  \right]  \right] \nonumber\\
& -\frac{7}{4}d_{1}d_{2}d_{4}\left[  X,\left[  Y,\left[  X,Y\right]  \right]
\right]  +\frac{3}{4}d_{1}d_{2}d_{4}\left[  X,\left[  \left[  X,Y\right]
,Y\right]  \right]  -\frac{7}{4}d_{1}d_{2}d_{4}\left[  Y,\left[  X,\left[
X,Y\right]  \right]  \right] \nonumber\\
& -\frac{7}{4}d_{1}d_{2}d_{4}\left[  Y,\left[  Y,\left[  X,Y\right]  \right]
\right]  +\frac{3}{4}d_{1}d_{2}d_{4}\left[  Y,\left[  \left[  X,Y\right]
,Y\right]  \right]  +\frac{1}{2}d_{3}d_{4}\left[  X,\left[  X,Y\right]
\right] \nonumber\\
& +\frac{1}{2}d_{3}d_{4}\left[  Y,\left[  X,Y\right]  \right]  -\frac{7}%
{4}d_{1}d_{3}d_{4}\left[  X,\left[  X,\left[  X,Y\right]  \right]  \right]
-\frac{7}{4}d_{1}d_{3}d_{4}\left[  X,\left[  Y,\left[  X,Y\right]  \right]
\right] \nonumber\\
& +\frac{3}{4}d_{1}d_{3}d_{4}\left[  X,\left[  \left[  X,Y\right]  ,Y\right]
\right]  -\frac{7}{4}d_{1}d_{3}d_{4}\left[  Y,\left[  X,\left[  X,Y\right]
\right]  \right]  -\frac{7}{4}d_{1}d_{3}d_{4}\left[  Y,\left[  Y,\left[
X,Y\right]  \right]  \right] \nonumber\\
& +\frac{3}{4}d_{1}d_{3}d_{4}\left[  Y,\left[  \left[  X,Y\right]  ,Y\right]
\right]  -\frac{7}{4}d_{2}d_{3}d_{4}\left[  X,\left[  X,\left[  X,Y\right]
\right]  \right]  -\frac{7}{4}d_{2}d_{3}d_{4}\left[  X,\left[  Y,\left[
X,Y\right]  \right]  \right] \nonumber\\
& +\frac{3}{4}d_{2}d_{3}d_{4}\left[  X,\left[  \left[  X,Y\right]  ,Y\right]
\right]  -\frac{7}{4}d_{2}d_{3}d_{4}\left[  Y,\left[  X,\left[  X,Y\right]
\right]  \right]  -\frac{7}{4}d_{2}d_{3}d_{4}\left[  Y,\left[  Y,\left[
X,Y\right]  \right]  \right] \nonumber\\
& +\frac{3}{4}d_{2}d_{3}d_{4}\left[  Y,\left[  \left[  X,Y\right]  ,Y\right]
\right]  +3d_{1}d_{2}d_{3}d_{4}\left[  X,\left[  X,\left[  X,\left[
X,Y\right]  \right]  \right]  \right] \nonumber\\
& +3d_{1}d_{2}d_{3}d_{4}\left[  X,\left[  X,\left[  Y,\left[  X,Y\right]
\right]  \right]  \right]  +\frac{1}{2}d_{1}d_{2}d_{3}d_{4}\left[  X,\left[
X,\left[  \left[  X,Y\right]  ,Y\right]  \right]  \right] \nonumber\\
& +3d_{1}d_{2}d_{3}d_{4}\left[  X,\left[  Y,\left[  X,\left[  X,Y\right]
\right]  \right]  \right]  +3d_{1}d_{2}d_{3}d_{4}\left[  X,\left[  Y,\left[
Y,\left[  X,Y\right]  \right]  \right]  \right] \nonumber\\
& -\frac{1}{2}d_{1}d_{2}d_{3}d_{4}\left[  X,\left[  Y,\left[  \left[
X,Y\right]  ,Y\right]  \right]  \right]  +3d_{1}d_{2}d_{3}d_{4}\left[
Y,\left[  X,\left[  X,\left[  X,Y\right]  \right]  \right]  \right]
\nonumber\\
& +3d_{1}d_{2}d_{3}d_{4}\left[  Y,\left[  X,\left[  Y,\left[  X,Y\right]
\right]  \right]  \right]  -\frac{1}{2}d_{1}d_{2}d_{3}d_{4}\left[  Y,\left[
X,\left[  \left[  X,Y\right]  ,Y\right]  \right]  \right] \nonumber\\
& +3d_{1}d_{2}d_{3}d_{4}\left[  Y,\left[  Y,\left[  X,\left[  X,Y\right]
\right]  \right]  \right]  +3d_{1}d_{2}d_{3}d_{4}\left[  Y,\left[  Y,\left[
Y,\left[  X,Y\right]  \right]  \right]  \right] \nonumber\\
& -\frac{1}{2}d_{1}d_{2}d_{3}d_{4}\left[  Y,\left[  Y,\left[  \left[
X,Y\right]  ,Y\right]  \right]  \right]  -\frac{3}{2}d_{1}d_{2}d_{3}%
d_{4}\left[  \left[  X,Y\right]  ,\left[  X,\left[  X,Y\right]  \right]
\right] \nonumber\\
& -\frac{3}{2}d_{1}d_{2}d_{3}d_{4}\left[  \left[  X,Y\right]  ,\left[
Y,\left[  X,Y\right]  \right]  \right] \label{4.6}%
\end{align}

Letting $n_{53}$ be the right-hand side of (\ref{4.6}), we have
\begin{align}
& \left(  \ref{4.5}\right) \nonumber\\
& =\exp\;d_{5}X.\nonumber\\
& \exp\;\left(
\begin{array}
[c]{c}%
\left(  d_{1}+d_{2}+d_{3}+d_{4}\right)  \left(  X+Y\right)  +d_{5}Y+\frac
{1}{2}\left(  d_{1}+d_{2}+d_{3}+d_{4}\right)  ^{2}\left[  X,Y\right] \\
+\frac{1}{12}\left(  d_{1}+d_{2}+d_{3}+d_{4}\right)  ^{3}\left[  \left[
X,Y\right]  ,Y-X\right] \\
+\frac{1}{96}\left(  d_{1}+d_{2}+d_{3}+d_{4}\right)  ^{4}\left(  \left[
X+Y,\left[  \left[  X,Y\right]  ,X+Y\right]  \right]  +\left[  \left[  \left[
X,Y\right]  ,X-Y\right]  ,X-Y\right]  \right) \\
+d_{5}i_{51}%
\end{array}
\right)  .\nonumber\\
& \exp\;d_{5}n_{53}.\exp\;-d_{5}n_{51}-d_{5}n_{52}-d_{5}n_{53}\nonumber\\
& \left)  \text{By Proposition \ref{t2.2}}\right( \nonumber\\
& =\exp\;d_{5}X.\nonumber\\
& \exp\;\left(
\begin{array}
[c]{c}%
\left(  d_{1}+d_{2}+d_{3}+d_{4}\right)  \left(  X+Y\right)  +d_{5}Y+\frac
{1}{2}\left(  d_{1}+d_{2}+d_{3}+d_{4}\right)  ^{2}\left[  X,Y\right] \\
+\frac{1}{12}\left(  d_{1}+d_{2}+d_{3}+d_{4}\right)  ^{3}\left[  \left[
X,Y\right]  ,Y-X\right] \\
+\frac{1}{96}\left(  d_{1}+d_{2}+d_{3}+d_{4}\right)  ^{4}\left(  \left[
X+Y,\left[  \left[  X,Y\right]  ,X+Y\right]  \right]  +\left[  \left[  \left[
X,Y\right]  ,X-Y\right]  ,X-Y\right]  \right) \\
+d_{5}i_{51}%
\end{array}
\right)  .\nonumber\\
& \left(  n_{53}\right)  _{d_{5}}.\exp\;-d_{5}n_{51}-d_{5}n_{52}-d_{5}%
n_{53}\nonumber\\
& \left)  \text{By Proposition \ref{t2.3}}\right( \nonumber\\
& =\exp\;d_{5}X.\nonumber\\
& \exp\;\left(
\begin{array}
[c]{c}%
\left(  d_{1}+d_{2}+d_{3}+d_{4}\right)  \left(  X+Y\right)  +d_{5}Y+\frac
{1}{2}\left(  d_{1}+d_{2}+d_{3}+d_{4}\right)  ^{2}\left[  X,Y\right] \\
+\frac{1}{12}\left(  d_{1}+d_{2}+d_{3}+d_{4}\right)  ^{3}\left[  \left[
X,Y\right]  ,Y-X\right] \\
+\frac{1}{96}\left(  d_{1}+d_{2}+d_{3}+d_{4}\right)  ^{4}\left(  \left[
X+Y,\left[  \left[  X,Y\right]  ,X+Y\right]  \right]  +\left[  \left[  \left[
X,Y\right]  ,X-Y\right]  ,X-Y\right]  \right) \\
+d_{5}i_{51}+d_{5}i_{52}%
\end{array}
\right)  .\nonumber\\
& \exp\;-d_{5}n_{51}-d_{5}n_{52}-d_{5}n_{53}\nonumber\\
& \left)  \text{By (\ref{4.6})}\right( \label{4.7}%
\end{align}

We let $i_{53}$ be the result of $-n_{51}-n_{52}-n_{53}$ by deleting all the
terms whose coefficients contain $d_{1}d_{2}d_{3}d_{4}$. Then, due to Theorem
\ref{t2.1}, we have
\begin{align}
& \delta^{\mathrm{left}}\left(  \exp\right)  \left(
\begin{array}
[c]{c}%
\left(  d_{1}+d_{2}+d_{3}+d_{4}\right)  \left(  X+Y\right)  +d_{5}Y+\frac
{1}{2}\left(  d_{1}+d_{2}+d_{3}+d_{4}\right)  ^{2}\left[  X,Y\right] \\
+\frac{1}{12}\left(  d_{1}+d_{2}+d_{3}+d_{4}\right)  ^{3}\left[  \left[
X,Y\right]  ,Y-X\right] \\
+\frac{1}{96}\left(  d_{1}+d_{2}+d_{3}+d_{4}\right)  ^{4}\left[  X+Y,\left[
\left[  X,Y\right]  ,X+Y\right]  \right] \\
+\frac{1}{96}\left(  d_{1}+d_{2}+d_{3}+d_{4}\right)  ^{4}\left[  \left[
\left[  X,Y\right]  ,X-Y\right]  ,X-Y\right] \\
+d_{5}i_{51}+d_{5}i_{52}%
\end{array}
\right)  \left(  i_{53}\right) \nonumber\\
& =\frac{3}{4}d_{1}d_{2}d_{3}\left[  X,\left[  X,\left[  X,Y\right]  \right]
\right]  +\frac{3}{4}d_{1}d_{2}d_{3}\left[  X,\left[  Y,\left[  X,Y\right]
\right]  \right]  +\frac{3}{4}d_{1}d_{2}d_{3}\left[  Y,\left[  X,\left[
X,Y\right]  \right]  \right] \nonumber\\
& +\frac{3}{4}d_{1}d_{2}d_{3}\left[  Y,\left[  Y,\left[  X,Y\right]  \right]
\right]  +\frac{3}{4}d_{1}d_{2}d_{4}\left[  X,\left[  X,\left[  X,Y\right]
\right]  \right]  +\frac{3}{4}d_{1}d_{2}d_{4}\left[  X,\left[  Y,\left[
X,Y\right]  \right]  \right] \nonumber\\
& +\frac{3}{4}d_{1}d_{2}d_{4}\left[  Y,\left[  X,\left[  X,Y\right]  \right]
\right]  +\frac{3}{4}d_{1}d_{2}d_{4}\left[  Y,\left[  Y,\left[  X,Y\right]
\right]  \right]  +\frac{3}{4}d_{1}d_{3}d_{4}\left[  X,\left[  X,\left[
X,Y\right]  \right]  \right] \nonumber\\
& +\frac{3}{4}d_{1}d_{3}d_{4}\left[  X,\left[  Y,\left[  X,Y\right]  \right]
\right]  +\frac{3}{4}d_{1}d_{3}d_{4}\left[  Y,\left[  X,\left[  X,Y\right]
\right]  \right]  +\frac{3}{4}d_{1}d_{3}d_{4}\left[  Y,\left[  Y,\left[
X,Y\right]  \right]  \right] \nonumber\\
& +\frac{3}{4}d_{2}d_{3}d_{4}\left[  X,\left[  X,\left[  X,Y\right]  \right]
\right]  +\frac{3}{4}d_{2}d_{3}d_{4}\left[  X,\left[  Y,\left[  X,Y\right]
\right]  \right]  +\frac{3}{4}d_{2}d_{3}d_{4}\left[  Y,\left[  X,\left[
X,Y\right]  \right]  \right] \nonumber\\
& +\frac{3}{4}d_{2}d_{3}d_{4}\left[  Y,\left[  Y,\left[  X,Y\right]  \right]
\right]  -\frac{1}{2}d_{1}d_{2}d_{3}d_{4}\left[  X,\left[  X,\left[  X,\left[
X,Y\right]  \right]  \right]  \right] \nonumber\\
& -\frac{1}{2}d_{1}d_{2}d_{3}d_{4}\left[  X,\left[  X,\left[  Y,\left[
X,Y\right]  \right]  \right]  \right]  -\frac{1}{2}d_{1}d_{2}d_{3}d_{4}\left[
X,\left[  Y,\left[  X,\left[  X,Y\right]  \right]  \right]  \right]
\nonumber\\
& -\frac{1}{2}d_{1}d_{2}d_{3}d_{4}\left[  X,\left[  Y,\left[  Y,\left[
X,Y\right]  \right]  \right]  \right]  -\frac{1}{2}d_{1}d_{2}d_{3}d_{4}\left[
Y,\left[  X,\left[  X,\left[  X,Y\right]  \right]  \right]  \right]
\nonumber\\
& -\frac{1}{2}d_{1}d_{2}d_{3}d_{4}\left[  Y,\left[  X,\left[  Y,\left[
X,Y\right]  \right]  \right]  \right]  -\frac{1}{2}d_{1}d_{2}d_{3}d_{4}\left[
Y,\left[  Y,\left[  X,\left[  X,Y\right]  \right]  \right]  \right]
\nonumber\\
& -\frac{1}{2}d_{1}d_{2}d_{3}d_{4}\left[  Y,\left[  Y,\left[  Y,\left[
X,Y\right]  \right]  \right]  \right] \label{4.8}%
\end{align}

Letting $n_{54}$ be the right-hand side of (\ref{4.8}), we have
\begin{align}
& \left(  \ref{4.7}\right) \nonumber\\
& =\exp\;d_{5}X.\nonumber\\
& \exp\;\left(
\begin{array}
[c]{c}%
\left(  d_{1}+d_{2}+d_{3}+d_{4}\right)  \left(  X+Y\right)  +d_{5}Y+\frac
{1}{2}\left(  d_{1}+d_{2}+d_{3}+d_{4}\right)  ^{2}\left[  X,Y\right] \\
+\frac{1}{12}\left(  d_{1}+d_{2}+d_{3}+d_{4}\right)  ^{3}\left[  \left[
X,Y\right]  ,Y-X\right] \\
+\frac{1}{96}\left(  d_{1}+d_{2}+d_{3}+d_{4}\right)  ^{4}\left(  \left[
X+Y,\left[  \left[  X,Y\right]  ,X+Y\right]  \right]  +\left[  \left[  \left[
X,Y\right]  ,X-Y\right]  ,X-Y\right]  \right) \\
+d_{5}i_{51}+d_{5}i_{52}%
\end{array}
\right)  .\nonumber\\
& \exp\;d_{5}n_{54}.\exp\;-d_{5}n_{51}-d_{5}n_{52}-d_{5}n_{53}-d_{5}%
n_{54}\nonumber\\
& \left)  \text{By Proposition \ref{t2.2}}\right( \nonumber\\
& =\exp\;d_{5}X.\nonumber\\
& \exp\;\left(
\begin{array}
[c]{c}%
\left(  d_{1}+d_{2}+d_{3}+d_{4}\right)  \left(  X+Y\right)  +d_{5}Y+\frac
{1}{2}\left(  d_{1}+d_{2}+d_{3}+d_{4}\right)  ^{2}\left[  X,Y\right] \\
+\frac{1}{12}\left(  d_{1}+d_{2}+d_{3}+d_{4}\right)  ^{3}\left[  \left[
X,Y\right]  ,Y-X\right] \\
+\frac{1}{96}\left(  d_{1}+d_{2}+d_{3}+d_{4}\right)  ^{4}\left(  \left[
X+Y,\left[  \left[  X,Y\right]  ,X+Y\right]  \right]  +\left[  \left[  \left[
X,Y\right]  ,X-Y\right]  ,X-Y\right]  \right) \\
+d_{5}i_{51}+d_{5}i_{52}%
\end{array}
\right)  .\nonumber\\
& \left(  n_{54}\right)  _{d_{5}}.\exp\;-d_{5}n_{51}-d_{5}n_{52}-d_{5}%
n_{53}-d_{5}n_{54}\nonumber\\
& \left)  \text{By Proposition \ref{t2.3}}\right( \nonumber\\
& =\exp\;d_{5}X.\nonumber\\
& \exp\;\left(
\begin{array}
[c]{c}%
\left(  d_{1}+d_{2}+d_{3}+d_{4}\right)  \left(  X+Y\right)  +d_{5}Y+\frac
{1}{2}\left(  d_{1}+d_{2}+d_{3}+d_{4}\right)  ^{2}\left[  X,Y\right] \\
+\frac{1}{12}\left(  d_{1}+d_{2}+d_{3}+d_{4}\right)  ^{3}\left[  \left[
X,Y\right]  ,Y-X\right] \\
+\frac{1}{96}\left(  d_{1}+d_{2}+d_{3}+d_{4}\right)  ^{4}\left(  \left[
X+Y,\left[  \left[  X,Y\right]  ,X+Y\right]  \right]  +\left[  \left[  \left[
X,Y\right]  ,X-Y\right]  ,X-Y\right]  \right) \\
+d_{5}i_{51}+d_{5}i_{52}+d_{5}i_{53}%
\end{array}
\right)  .\nonumber\\
& \exp\;-d_{5}n_{51}-d_{5}n_{52}-d_{5}n_{53}-d_{5}n_{54}\nonumber\\
& \left)  \text{By (\ref{4.8})}\right( \label{4.9}%
\end{align}

Since the coefficient of every term in $-n_{51}-n_{52}-n_{53}-n_{54}$ contains
$d_{1}d_{2}d_{3}d_{4}$, we turn our attention to the left $\exp\;d_{5}X$. Now,
thanks to Theorem \ref{t2.1}, we have
\begin{align*}
& \delta^{\mathrm{right}}\left(  \exp\right)  \left(
\begin{array}
[c]{c}%
\left(  d_{1}+d_{2}+d_{3}+d_{4}\right)  \left(  X+Y\right)  +d_{5}Y+\frac
{1}{2}\left(  d_{1}+d_{2}+d_{3}+d_{4}\right)  ^{2}\left[  X,Y\right] \\
+\frac{1}{12}\left(  d_{1}+d_{2}+d_{3}+d_{4}\right)  ^{3}\left[  \left[
X,Y\right]  ,Y-X\right] \\
+\frac{1}{96}\left(  d_{1}+d_{2}+d_{3}+d_{4}\right)  ^{4}\left[  X+Y,\left[
\left[  X,Y\right]  ,X+Y\right]  \right] \\
+\frac{1}{96}\left(  d_{1}+d_{2}+d_{3}+d_{4}\right)  ^{4}\left[  \left[
\left[  X,Y\right]  ,X-Y\right]  ,X-Y\right] \\
+d_{5}i_{51}+d_{5}i_{52}+d_{5}i_{53}%
\end{array}
\right)  \left(  X\right) \\
& =X-\frac{1}{2}d_{1}\left[  X,Y\right]  -\frac{1}{2}d_{2}\left[  X,Y\right]
-\frac{1}{3}d_{1}d_{2}\left[  X,\left[  X,Y\right]  \right]  -\frac{1}{3}%
d_{1}d_{2}\left[  Y,\left[  X,Y\right]  \right] \\
& +\frac{1}{2}d_{1}d_{2}\left[  \left[  X,Y\right]  ,X\right]  -\frac{1}%
{2}d_{3}\left[  X,Y\right]  -\frac{1}{3}d_{1}d_{3}\left[  X,\left[
X,Y\right]  \right]  -\frac{1}{3}d_{1}d_{3}\left[  Y,\left[  X,Y\right]
\right] \\
& +\frac{1}{2}d_{1}d_{3}\left[  \left[  X,Y\right]  ,X\right]  -\frac{1}%
{3}d_{2}d_{3}\left[  X,\left[  X,Y\right]  \right]  -\frac{1}{3}d_{2}%
d_{3}\left[  Y,\left[  X,Y\right]  \right]  +\frac{1}{2}d_{2}d_{3}\left[
\left[  X,Y\right]  ,X\right] \\
& -\frac{1}{4}d_{1}d_{2}d_{3}\left[  X,\left[  X,\left[  X,Y\right]  \right]
\right]  -\frac{1}{4}d_{1}d_{2}d_{3}\left[  X,\left[  Y,\left[  X,Y\right]
\right]  \right]  +\frac{1}{2}d_{1}d_{2}d_{3}\left[  X,\left[  \left[
X,Y\right]  ,X\right]  \right] \\
& -\frac{1}{4}d_{1}d_{2}d_{3}\left[  Y,\left[  X,\left[  X,Y\right]  \right]
\right]  -\frac{1}{4}d_{1}d_{2}d_{3}\left[  Y,\left[  Y,\left[  X,Y\right]
\right]  \right]  +\frac{1}{2}d_{1}d_{2}d_{3}\left[  Y,\left[  \left[
X,Y\right]  ,X\right]  \right] \\
& -\frac{1}{4}d_{1}d_{2}d_{3}\left[  \left[  \left[  X,Y\right]  ,X\right]
,X\right]  +\frac{1}{4}d_{1}d_{2}d_{3}\left[  \left[  \left[  X,Y\right]
,Y\right]  ,X\right]  -\frac{1}{2}d_{4}\left[  X,Y\right]  -\frac{1}{3}%
d_{1}d_{4}\left[  X,\left[  X,Y\right]  \right] \\
& -\frac{1}{3}d_{1}d_{4}\left[  Y,\left[  X,Y\right]  \right]  +\frac{1}%
{2}d_{1}d_{4}\left[  \left[  X,Y\right]  ,X\right]  -\frac{1}{3}d_{2}%
d_{4}\left[  X,\left[  X,Y\right]  \right]  -\frac{1}{3}d_{2}d_{4}\left[
Y,\left[  X,Y\right]  \right] \\
& +\frac{1}{2}d_{2}d_{4}\left[  \left[  X,Y\right]  ,X\right]  -\frac{1}%
{4}d_{1}d_{2}d_{4}\left[  X,\left[  X,\left[  X,Y\right]  \right]  \right]
-\frac{1}{4}d_{1}d_{2}d_{4}\left[  X,\left[  Y,\left[  X,Y\right]  \right]
\right] \\
& +\frac{1}{2}d_{1}d_{2}d_{4}\left[  X,\left[  \left[  X,Y\right]  ,X\right]
\right]  -\frac{1}{4}d_{1}d_{2}d_{4}\left[  Y,\left[  X,\left[  X,Y\right]
\right]  \right]  -\frac{1}{4}d_{1}d_{2}d_{4}\left[  Y,\left[  Y,\left[
X,Y\right]  \right]  \right] \\
& +\frac{1}{2}d_{1}d_{2}d_{4}\left[  Y,\left[  \left[  X,Y\right]  ,X\right]
\right]  -\frac{1}{4}d_{1}d_{2}d_{4}\left[  \left[  \left[  X,Y\right]
,X\right]  ,X\right]  +\frac{1}{4}d_{1}d_{2}d_{4}\left[  \left[  \left[
X,Y\right]  ,Y\right]  ,X\right] \\
& -\frac{1}{3}d_{3}d_{4}\left[  X,\left[  X,Y\right]  \right]  -\frac{1}%
{3}d_{3}d_{4}\left[  Y,\left[  X,Y\right]  \right]  +\frac{1}{2}d_{3}%
d_{4}\left[  \left[  X,Y\right]  ,X\right]  -\frac{1}{4}d_{1}d_{3}d_{4}\left[
X,\left[  X,\left[  X,Y\right]  \right]  \right] \\
& -\frac{1}{4}d_{1}d_{3}d_{4}\left[  X,\left[  Y,\left[  X,Y\right]  \right]
\right]  +\frac{1}{2}d_{1}d_{3}d_{4}\left[  X,\left[  \left[  X,Y\right]
,X\right]  \right]  -\frac{1}{4}d_{1}d_{3}d_{4}\left[  Y,\left[  X,\left[
X,Y\right]  \right]  \right] \\
& -\frac{1}{4}d_{1}d_{3}d_{4}\left[  Y,\left[  Y,\left[  X,Y\right]  \right]
\right]  +\frac{1}{2}d_{1}d_{3}d_{4}\left[  Y,\left[  \left[  X,Y\right]
,X\right]  \right]  -\frac{1}{4}d_{1}d_{3}d_{4}\left[  \left[  \left[
X,Y\right]  ,X\right]  ,X\right] \\
& +\frac{1}{4}d_{1}d_{3}d_{4}\left[  \left[  \left[  X,Y\right]  ,Y\right]
,X\right]  -\frac{1}{4}d_{2}d_{3}d_{4}\left[  X,\left[  X,\left[  X,Y\right]
\right]  \right]  -\frac{1}{4}d_{2}d_{3}d_{4}\left[  X,\left[  Y,\left[
X,Y\right]  \right]  \right] \\
& +\frac{1}{2}d_{2}d_{3}d_{4}\left[  X,\left[  \left[  X,Y\right]  ,X\right]
\right]  -\frac{1}{4}d_{2}d_{3}d_{4}\left[  Y,\left[  X,\left[  X,Y\right]
\right]  \right]  -\frac{1}{4}d_{2}d_{3}d_{4}\left[  Y,\left[  Y,\left[
X,Y\right]  \right]  \right] \\
& +\frac{1}{2}d_{2}d_{3}d_{4}\left[  Y,\left[  \left[  X,Y\right]  ,X\right]
\right]  -\frac{1}{4}d_{2}d_{3}d_{4}\left[  \left[  \left[  X,Y\right]
,X\right]  ,X\right]  +\frac{1}{4}d_{2}d_{3}d_{4}\left[  \left[  \left[
X,Y\right]  ,Y\right]  ,X\right] \\
& -\frac{1}{5}d_{1}d_{2}d_{3}d_{4}\left[  X,\left[  X,\left[  X,\left[
X,Y\right]  \right]  \right]  \right]  -\frac{1}{5}d_{1}d_{2}d_{3}d_{4}\left[
X,\left[  X,\left[  Y,\left[  X,Y\right]  \right]  \right]  \right] \\
& +\frac{1}{2}d_{1}d_{2}d_{3}d_{4}\left[  X,\left[  X,\left[  \left[
X,Y\right]  ,X\right]  \right]  \right]  -\frac{1}{5}d_{1}d_{2}d_{3}%
d_{4}\left[  X,\left[  Y,\left[  X,\left[  X,Y\right]  \right]  \right]
\right] \\
& -\frac{1}{5}d_{1}d_{2}d_{3}d_{4}\left[  X,\left[  Y,\left[  Y,\left[
X,Y\right]  \right]  \right]  \right]  +\frac{1}{2}d_{1}d_{2}d_{3}d_{4}\left[
X,\left[  Y,\left[  \left[  X,Y\right]  ,X\right]  \right]  \right]
\end{align*}
\begin{align}
& -\frac{1}{3}d_{1}d_{2}d_{3}d_{4}\left[  X,\left[  \left[  \left[
X,Y\right]  ,X\right]  ,X\right]  \right]  +\frac{1}{3}d_{1}d_{2}d_{3}%
d_{4}\left[  X,\left[  \left[  \left[  X,Y\right]  ,Y\right]  ,X\right]
\right] \nonumber\\
& -\frac{1}{5}d_{1}d_{2}d_{3}d_{4}\left[  Y,\left[  X,\left[  X,\left[
X,Y\right]  \right]  \right]  \right]  -\frac{1}{5}d_{1}d_{2}d_{3}d_{4}\left[
Y,\left[  X,\left[  Y,\left[  X,Y\right]  \right]  \right]  \right]
\nonumber\\
& +\frac{1}{2}d_{1}d_{2}d_{3}d_{4}\left[  Y,\left[  X,\left[  \left[
X,Y\right]  ,X\right]  \right]  \right]  -\frac{1}{5}d_{1}d_{2}d_{3}%
d_{4}\left[  Y,\left[  Y,\left[  X,\left[  X,Y\right]  \right]  \right]
\right] \nonumber\\
& -\frac{1}{5}d_{1}d_{2}d_{3}d_{4}\left[  Y,\left[  Y,\left[  Y,\left[
X,Y\right]  \right]  \right]  \right]  +\frac{1}{2}d_{1}d_{2}d_{3}d_{4}\left[
Y,\left[  Y,\left[  \left[  X,Y\right]  ,X\right]  \right]  \right]
\nonumber\\
& -\frac{1}{3}d_{1}d_{2}d_{3}d_{4}\left[  Y,\left[  \left[  \left[
X,Y\right]  ,X\right]  ,X\right]  \right]  +\frac{1}{3}d_{1}d_{2}d_{3}%
d_{4}\left[  Y,\left[  \left[  \left[  X,Y\right]  ,Y\right]  ,X\right]
\right] \nonumber\\
& -\frac{1}{2}d_{1}d_{2}d_{3}d_{4}\left[  \left[  X,Y\right]  ,\left[
X,\left[  X,Y\right]  \right]  \right]  -\frac{1}{2}d_{1}d_{2}d_{3}%
d_{4}\left[  \left[  X,Y\right]  ,\left[  Y,\left[  X,Y\right]  \right]
\right] \nonumber\\
& +d_{1}d_{2}d_{3}d_{4}\left[  \left[  X,Y\right]  ,\left[  \left[
X,Y\right]  ,X\right]  \right]  +\frac{1}{8}d_{1}d_{2}d_{3}d_{4}\left[
\left[  X,\left[  \left[  X,Y\right]  ,X\right]  \right]  ,X\right]
\nonumber\\
& +\frac{1}{8}d_{1}d_{2}d_{3}d_{4}\left[  \left[  X,\left[  \left[
X,Y\right]  ,Y\right]  \right]  ,X\right]  +\frac{1}{8}d_{1}d_{2}d_{3}%
d_{4}\left[  \left[  Y,\left[  \left[  X,Y\right]  ,X\right]  \right]
,X\right] \nonumber\\
& +\frac{1}{8}d_{1}d_{2}d_{3}d_{4}\left[  \left[  Y,\left[  \left[
X,Y\right]  ,Y\right]  \right]  ,X\right]  +\frac{1}{3}d_{1}d_{2}d_{3}%
d_{4}\left[  \left[  Y,\left[  \left[  X,Y\right]  ,X\right]  \right]
,\left[  X,Y\right]  \right] \nonumber\\
& -\frac{1}{3}d_{1}d_{2}d_{3}d_{4}\left[  \left[  \left[  X,Y\right]
,Y\right]  ,\left[  X,Y\right]  \right]  +\frac{1}{8}d_{1}d_{2}d_{3}%
d_{4}\left[  \left[  \left[  \left[  X,Y\right]  ,X\right]  ,X\right]
,X\right] \nonumber\\
& -\frac{1}{8}d_{1}d_{2}d_{3}d_{4}\left[  \left[  \left[  \left[  X,Y\right]
,X\right]  ,Y\right]  ,X\right]  -\frac{1}{8}d_{1}d_{2}d_{3}d_{4}\left[
\left[  \left[  \left[  X,Y\right]  ,Y\right]  ,X\right]  ,X\right]
\nonumber\\
& +\frac{1}{8}d_{1}d_{2}d_{3}d_{4}\left[  \left[  \left[  \left[  X,Y\right]
,Y\right]  ,Y\right]  ,X\right] \label{4.10}%
\end{align}

Letting $m_{51}$ be the right-hand side of (\ref{4.10}) with the first term
$X$ deleted, we have
\begin{align}
& \left(  \ref{4.9}\right) \nonumber\\
& =\exp\;-d_{5}m_{51}.\exp\;d_{5}X+d_{5}m_{51}.\nonumber\\
& \exp\;\left(
\begin{array}
[c]{c}%
\left(  d_{1}+d_{2}+d_{3}+d_{4}\right)  \left(  X+Y\right)  +d_{5}Y+\frac
{1}{2}\left(  d_{1}+d_{2}+d_{3}+d_{4}\right)  ^{2}\left[  X,Y\right] \\
+\frac{1}{12}\left(  d_{1}+d_{2}+d_{3}+d_{4}\right)  ^{3}\left[  \left[
X,Y\right]  ,Y-X\right] \\
+\frac{1}{96}\left(  d_{1}+d_{2}+d_{3}+d_{4}\right)  ^{4}\left(  \left[
X+Y,\left[  \left[  X,Y\right]  ,X+Y\right]  \right]  +\left[  \left[  \left[
X,Y\right]  ,X-Y\right]  ,X-Y\right]  \right) \\
+d_{5}i_{51}+d_{5}i_{52}+d_{5}i_{53}%
\end{array}
\right)  .\nonumber\\
& \exp\;-d_{5}n_{51}-d_{5}n_{52}-d_{5}n_{53}-d_{5}n_{54}\nonumber\\
& \left)  \text{By Proposition \ref{t2.2}}\right( \nonumber\\
& =\exp\;-d_{5}m_{51}.\left(  X+m_{51}\right)  _{d_{5}}.\nonumber\\
& \exp\;\left(
\begin{array}
[c]{c}%
\left(  d_{1}+d_{2}+d_{3}+d_{4}\right)  \left(  X+Y\right)  +d_{5}Y+\frac
{1}{2}\left(  d_{1}+d_{2}+d_{3}+d_{4}\right)  ^{2}\left[  X,Y\right] \\
+\frac{1}{12}\left(  d_{1}+d_{2}+d_{3}+d_{4}\right)  ^{3}\left[  \left[
X,Y\right]  ,Y-X\right] \\
+\frac{1}{96}\left(  d_{1}+d_{2}+d_{3}+d_{4}\right)  ^{4}\left(  \left[
X+Y,\left[  \left[  X,Y\right]  ,X+Y\right]  \right]  +\left[  \left[  \left[
X,Y\right]  ,X-Y\right]  ,X-Y\right]  \right) \\
+d_{5}i_{51}+d_{5}i_{52}+d_{5}i_{53}%
\end{array}
\right)  .\nonumber\\
& \exp\;-d_{5}n_{51}-d_{5}n_{52}-d_{5}n_{53}-d_{5}n_{54}\nonumber\\
& \left)  \text{By Proposition \ref{t2.3}}\right( \nonumber\\
& =\exp\;-d_{5}m_{51}.\nonumber\\
& \exp\;\left(
\begin{array}
[c]{c}%
\left(  d_{1}+d_{2}+d_{3}+d_{4}+d_{5}\right)  \left(  X+Y\right)  +\frac{1}%
{2}\left(  d_{1}+d_{2}+d_{3}+d_{4}\right)  ^{2}\left[  X,Y\right] \\
+\frac{1}{12}\left(  d_{1}+d_{2}+d_{3}+d_{4}\right)  ^{3}\left[  \left[
X,Y\right]  ,Y-X\right] \\
+\frac{1}{96}\left(  d_{1}+d_{2}+d_{3}+d_{4}\right)  ^{4}\left(  \left[
X+Y,\left[  \left[  X,Y\right]  ,X+Y\right]  \right]  +\left[  \left[  \left[
X,Y\right]  ,X-Y\right]  ,X-Y\right]  \right) \\
+d_{5}i_{51}+d_{5}i_{52}+d_{5}i_{53}%
\end{array}
\right)  .\nonumber\\
& \exp\;-d_{5}n_{51}-d_{5}n_{52}-d_{5}n_{53}-d_{5}n_{54}\nonumber\\
& \left)  \text{By (\ref{4.10})}\right( \label{4.11}%
\end{align}

We let $j_{51}$ be the result of $-m_{51}$ by deleting all the terms whose
coefficients contain $d_{1}d_{2}d_{3}d_{4}$. Then, due to Theorem \ref{t2.1},
we have
\begin{align*}
& \delta^{\mathrm{right}}\left(  \exp\right)  \left(
\begin{array}
[c]{c}%
\left(  d_{1}+d_{2}+d_{3}+d_{4}+d_{5}\right)  \left(  X+Y\right)  +\frac{1}%
{2}\left(  d_{1}+d_{2}+d_{3}+d_{4}\right)  ^{2}\left[  X,Y\right] \\
+\frac{1}{12}\left(  d_{1}+d_{2}+d_{3}+d_{4}\right)  ^{3}\left[  \left[
X,Y\right]  ,Y-X\right] \\
+\frac{1}{96}\left(  d_{1}+d_{2}+d_{3}+d_{4}\right)  ^{4}\left[  X+Y,\left[
\left[  X,Y\right]  ,X+Y\right]  \right] \\
+\frac{1}{96}\left(  d_{1}+d_{2}+d_{3}+d_{4}\right)  ^{4}\left[  \left[
\left[  X,Y\right]  ,X-Y\right]  ,X-Y\right] \\
+d_{5}i_{51}+d_{5}i_{52}+d_{5}i_{53}%
\end{array}
\right)  \left(  j_{51}\right) \\
& =\frac{1}{2}d_{1}\left[  X,Y\right]  +\frac{1}{2}d_{2}\left[  X,Y\right]
+\frac{5}{6}d_{1}d_{2}\left[  X,\left[  X,Y\right]  \right]  +\frac{5}{6}%
d_{1}d_{2}\left[  Y,\left[  X,Y\right]  \right]  -\frac{1}{2}d_{1}d_{2}\left[
\left[  X,Y\right]  ,X\right] \\
& +\frac{1}{2}d_{3}\left[  X,Y\right]  +\frac{5}{6}d_{1}d_{3}\left[  X,\left[
X,Y\right]  \right]  +\frac{5}{6}d_{1}d_{3}\left[  Y,\left[  X,Y\right]
\right]  -\frac{1}{2}d_{1}d_{3}\left[  \left[  X,Y\right]  ,X\right] \\
& +\frac{5}{6}d_{2}d_{3}\left[  X,\left[  X,Y\right]  \right]  +\frac{5}%
{6}d_{2}d_{3}\left[  Y,\left[  X,Y\right]  \right]  -\frac{1}{2}d_{2}%
d_{3}\left[  \left[  X,Y\right]  ,X\right]  +\frac{5}{4}d_{1}d_{2}d_{3}\left[
X,\left[  X,\left[  X,Y\right]  \right]  \right] \\
& +\frac{5}{4}d_{1}d_{2}d_{3}\left[  X,\left[  Y,\left[  X,Y\right]  \right]
\right]  -\frac{5}{4}d_{1}d_{2}d_{3}\left[  X,\left[  \left[  X,Y\right]
,X\right]  \right]  +\frac{5}{4}d_{1}d_{2}d_{3}\left[  Y,\left[  X,\left[
X,Y\right]  \right]  \right] \\
& +\frac{5}{4}d_{1}d_{2}d_{3}\left[  Y,\left[  Y,\left[  X,Y\right]  \right]
\right]  -\frac{5}{4}d_{1}d_{2}d_{3}\left[  Y,\left[  \left[  X,Y\right]
,X\right]  \right]  +\frac{1}{4}d_{1}d_{2}d_{3}\left[  \left[  \left[
X,Y\right]  ,X\right]  ,X\right] \\
& -\frac{1}{4}d_{1}d_{2}d_{3}\left[  \left[  \left[  X,Y\right]  ,Y\right]
,X\right]  +\frac{1}{2}d_{4}\left[  X,Y\right]  +\frac{5}{6}d_{1}d_{4}\left[
X,\left[  X,Y\right]  \right]  +\frac{5}{6}d_{1}d_{4}\left[  Y,\left[
X,Y\right]  \right] \\
& -\frac{1}{2}d_{1}d_{4}\left[  \left[  X,Y\right]  ,X\right]  +\frac{5}%
{6}d_{2}d_{4}\left[  X,\left[  X,Y\right]  \right]  +\frac{5}{6}d_{2}%
d_{4}\left[  Y,\left[  X,Y\right]  \right]  -\frac{1}{2}d_{2}d_{4}\left[
\left[  X,Y\right]  ,X\right] \\
& +\frac{5}{4}d_{1}d_{2}d_{4}\left[  X,\left[  X,\left[  X,Y\right]  \right]
\right]  +\frac{5}{4}d_{1}d_{2}d_{4}\left[  X,\left[  Y,\left[  X,Y\right]
\right]  \right]  -\frac{5}{4}d_{1}d_{2}d_{4}\left[  X,\left[  \left[
X,Y\right]  ,X\right]  \right] \\
& +\frac{5}{4}d_{1}d_{2}d_{4}\left[  Y,\left[  X,\left[  X,Y\right]  \right]
\right]  +\frac{5}{4}d_{1}d_{2}d_{4}\left[  Y,\left[  Y,\left[  X,Y\right]
\right]  \right]  -\frac{5}{4}d_{1}d_{2}d_{4}\left[  Y,\left[  \left[
X,Y\right]  ,X\right]  \right] \\
& +\frac{1}{4}d_{1}d_{2}d_{4}\left[  \left[  \left[  X,Y\right]  ,X\right]
,X\right]  -\frac{1}{4}d_{1}d_{2}d_{4}\left[  \left[  \left[  X,Y\right]
,Y\right]  ,X\right]  +\frac{5}{6}d_{3}d_{4}\left[  X,\left[  X,Y\right]
\right] \\
& +\frac{5}{6}d_{3}d_{4}\left[  Y,\left[  X,Y\right]  \right]  -\frac{1}%
{2}d_{3}d_{4}\left[  \left[  X,Y\right]  ,X\right]  +\frac{5}{4}d_{1}%
d_{3}d_{4}\left[  X,\left[  X,\left[  X,Y\right]  \right]  \right] \\
& +\frac{5}{4}d_{1}d_{3}d_{4}\left[  X,\left[  Y,\left[  X,Y\right]  \right]
\right]  -\frac{5}{4}d_{1}d_{3}d_{4}\left[  X,\left[  \left[  X,Y\right]
,X\right]  \right]  +\frac{5}{4}d_{1}d_{3}d_{4}\left[  Y,\left[  X,\left[
X,Y\right]  \right]  \right] \\
& +\frac{5}{4}d_{1}d_{3}d_{4}\left[  Y,\left[  Y,\left[  X,Y\right]  \right]
\right]  -\frac{5}{4}d_{1}d_{3}d_{4}\left[  Y,\left[  \left[  X,Y\right]
,X\right]  \right]  +\frac{1}{4}d_{1}d_{3}d_{4}\left[  \left[  \left[
X,Y\right]  ,X\right]  ,X\right] \\
& -\frac{1}{4}d_{1}d_{3}d_{4}\left[  \left[  \left[  X,Y\right]  ,Y\right]
,X\right]  +\frac{5}{4}d_{1}d_{3}d_{4}\left[  X,\left[  X,\left[  X,Y\right]
\right]  \right]  +\frac{5}{4}d_{1}d_{3}d_{4}\left[  X,\left[  Y,\left[
X,Y\right]  \right]  \right] \\
& -\frac{5}{4}d_{1}d_{3}d_{4}\left[  X,\left[  \left[  X,Y\right]  ,X\right]
\right]  +\frac{5}{4}d_{2}d_{3}d_{4}\left[  Y,\left[  X,\left[  X,Y\right]
\right]  \right]  +\frac{5}{4}d_{2}d_{3}d_{4}\left[  Y,\left[  Y,\left[
X,Y\right]  \right]  \right] \\
& -\frac{5}{4}d_{2}d_{3}d_{4}\left[  Y,\left[  \left[  X,Y\right]  ,X\right]
\right]  +\frac{1}{4}d_{2}d_{3}d_{4}\left[  \left[  \left[  X,Y\right]
,X\right]  ,X\right]  -\frac{1}{4}d_{2}d_{3}d_{4}\left[  \left[  \left[
X,Y\right]  ,Y\right]  ,X\right] \\
& +\frac{5}{3}d_{1}d_{2}d_{3}d_{4}\left[  X,\left[  X,\left[  X,\left[
X,Y\right]  \right]  \right]  \right]  +\frac{5}{3}d_{1}d_{2}d_{3}d_{4}\left[
X,\left[  X,\left[  Y,\left[  X,Y\right]  \right]  \right]  \right] \\
& -2d_{1}d_{2}d_{3}d_{4}\left[  X,\left[  X,\left[  \left[  X,Y\right]
,X\right]  \right]  \right]  +\frac{5}{3}d_{1}d_{2}d_{3}d_{4}\left[  X,\left[
Y,\left[  X,\left[  X,Y\right]  \right]  \right]  \right] \\
& +\frac{5}{3}d_{1}d_{2}d_{3}d_{4}\left[  X,\left[  Y,\left[  Y,\left[
X,Y\right]  \right]  \right]  \right]  -2d_{1}d_{2}d_{3}d_{4}\left[  X,\left[
Y,\left[  \left[  X,Y\right]  ,X\right]  \right]  \right]
\end{align*}
\begin{align}
& +\frac{1}{2}d_{1}d_{2}d_{3}d_{4}\left[  X,\left[  \left[  \left[
X,Y\right]  ,X\right]  ,X\right]  \right]  -\frac{1}{2}d_{1}d_{2}d_{3}%
d_{4}\left[  X,\left[  \left[  \left[  X,Y\right]  ,Y\right]  ,X\right]
\right] \nonumber\\
& +\frac{5}{3}d_{1}d_{2}d_{3}d_{4}\left[  Y,\left[  X,\left[  X,\left[
X,Y\right]  \right]  \right]  \right]  +\frac{5}{3}d_{1}d_{2}d_{3}d_{4}\left[
Y,\left[  X,\left[  Y,\left[  X,Y\right]  \right]  \right]  \right]
\nonumber\\
& -2d_{1}d_{2}d_{3}d_{4}\left[  Y,\left[  X,\left[  \left[  X,Y\right]
,X\right]  \right]  \right]  +\frac{5}{3}d_{1}d_{2}d_{3}d_{4}\left[  Y,\left[
Y,\left[  X,\left[  X,Y\right]  \right]  \right]  \right] \nonumber\\
& +\frac{5}{3}d_{1}d_{2}d_{3}d_{4}\left[  Y,\left[  Y,\left[  Y,\left[
X,Y\right]  \right]  \right]  \right]  -2d_{1}d_{2}d_{3}d_{4}\left[  Y,\left[
Y,\left[  \left[  X,Y\right]  ,X\right]  \right]  \right] \nonumber\\
& +\frac{1}{2}d_{1}d_{2}d_{3}d_{4}\left[  Y,\left[  \left[  \left[
X,Y\right]  ,X\right]  ,X\right]  \right]  -\frac{1}{2}d_{1}d_{2}d_{3}%
d_{4}\left[  Y,\left[  \left[  \left[  X,Y\right]  ,Y\right]  ,X\right]
\right] \nonumber\\
& +2d_{1}d_{2}d_{3}d_{4}\left[  \left[  X,Y\right]  ,\left[  X,\left[
X,Y\right]  \right]  \right]  +2d_{1}d_{2}d_{3}d_{4}\left[  \left[
X,Y\right]  ,\left[  Y,\left[  X,Y\right]  \right]  \right] \nonumber\\
& -\frac{3}{2}d_{1}d_{2}d_{3}d_{4}\left[  \left[  X,Y\right]  ,\left[  \left[
X,Y\right]  ,X\right]  \right]  -\frac{1}{2}d_{1}d_{2}d_{3}d_{4}\left[
\left[  \left[  X,Y\right]  ,X\right]  ,\left[  X,Y\right]  \right]
\nonumber\\
& +\frac{1}{2}d_{1}d_{2}d_{3}d_{4}\left[  \left[  \left[  X,Y\right]
,Y\right]  ,\left[  X,Y\right]  \right] \label{4.12}%
\end{align}

Letting $m_{52}$ be the right-hand side of (\ref{4.12}), we have
\begin{align}
& \left(  \ref{4.11}\right) \nonumber\\
& =\exp\;-d_{5}m_{51}-d_{5}m_{52}.\exp\;d_{5}m_{52}\nonumber\\
& \exp\;\left(
\begin{array}
[c]{c}%
\left(  d_{1}+d_{2}+d_{3}+d_{4}+d_{5}\right)  \left(  X+Y\right)  +\frac{1}%
{2}\left(  d_{1}+d_{2}+d_{3}+d_{4}\right)  ^{2}\left[  X,Y\right] \\
+\frac{1}{12}\left(  d_{1}+d_{2}+d_{3}+d_{4}\right)  ^{3}\left[  \left[
X,Y\right]  ,Y-X\right] \\
+\frac{1}{96}\left(  d_{1}+d_{2}+d_{3}+d_{4}\right)  ^{4}\left(  \left[
X+Y,\left[  \left[  X,Y\right]  ,X+Y\right]  \right]  +\left[  \left[  \left[
X,Y\right]  ,X-Y\right]  ,X-Y\right]  \right) \\
+d_{5}i_{51}+d_{5}i_{52}+d_{5}i_{53}%
\end{array}
\right)  .\nonumber\\
& \exp\;-d_{5}n_{51}-d_{5}n_{52}-d_{5}n_{53}-d_{5}n_{54}\nonumber\\
& \left)  \text{By Proposition \ref{t2.2}}\right( \nonumber\\
& =\exp\;-d_{5}m_{51}-d_{5}m_{52}.\left(  m_{52}\right)  _{d_{5}}.\nonumber\\
& \exp\;\left(
\begin{array}
[c]{c}%
\left(  d_{1}+d_{2}+d_{3}+d_{4}+d_{5}\right)  \left(  X+Y\right)  +\frac{1}%
{2}\left(  d_{1}+d_{2}+d_{3}+d_{4}\right)  ^{2}\left[  X,Y\right] \\
+\frac{1}{12}\left(  d_{1}+d_{2}+d_{3}+d_{4}\right)  ^{3}\left[  \left[
X,Y\right]  ,Y-X\right] \\
+\frac{1}{96}\left(  d_{1}+d_{2}+d_{3}+d_{4}\right)  ^{4}\left(  \left[
X+Y,\left[  \left[  X,Y\right]  ,X+Y\right]  \right]  +\left[  \left[  \left[
X,Y\right]  ,X-Y\right]  ,X-Y\right]  \right) \\
+d_{5}i_{51}+d_{5}i_{52}+d_{5}i_{53}%
\end{array}
\right)  .\nonumber\\
& \exp\;-d_{5}n_{51}-d_{5}n_{52}-d_{5}n_{53}-d_{5}n_{54}\nonumber\\
& \left)  \text{By Proposition \ref{t2.3}}\right( \nonumber\\
& =\exp\;-d_{5}m_{51}-d_{5}m_{52}.\nonumber\\
& \exp\;\left(
\begin{array}
[c]{c}%
\left(  d_{1}+d_{2}+d_{3}+d_{4}+d_{5}\right)  \left(  X+Y\right)  +\frac{1}%
{2}\left(  d_{1}+d_{2}+d_{3}+d_{4}\right)  ^{2}\left[  X,Y\right] \\
+\frac{1}{12}\left(  d_{1}+d_{2}+d_{3}+d_{4}\right)  ^{3}\left[  \left[
X,Y\right]  ,Y-X\right] \\
+\frac{1}{96}\left(  d_{1}+d_{2}+d_{3}+d_{4}\right)  ^{4}\left(  \left[
X+Y,\left[  \left[  X,Y\right]  ,X+Y\right]  \right]  +\left[  \left[  \left[
X,Y\right]  ,X-Y\right]  ,X-Y\right]  \right) \\
+d_{5}i_{51}+d_{5}i_{52}+d_{5}i_{53}+d_{5}j_{51}%
\end{array}
\right)  .\nonumber\\
& \exp\;-d_{5}n_{51}-d_{5}n_{52}-d_{5}n_{53}-d_{5}n_{54}\nonumber\\
& \left)  \text{By (\ref{4.12})}\right( \label{4.13}%
\end{align}

We let $j_{52}$ be the result of $-m_{51}-m_{52}$ by deleting all the terms
whose coefficients contain $d_{1}d_{2}d_{3}d_{4}$. Then, by dint of Theorem
\ref{t2.1}, we have
\begin{align}
& \delta^{\mathrm{right}}\left(  \exp\right)  \left(
\begin{array}
[c]{c}%
\left(  d_{1}+d_{2}+d_{3}+d_{4}+d_{5}\right)  \left(  X+Y\right)  +\frac{1}%
{2}\left(  d_{1}+d_{2}+d_{3}+d_{4}\right)  ^{2}\left[  X,Y\right] \\
+\frac{1}{12}\left(  d_{1}+d_{2}+d_{3}+d_{4}\right)  ^{3}\left[  \left[
X,Y\right]  ,Y-X\right] \\
+\frac{1}{96}\left(  d_{1}+d_{2}+d_{3}+d_{4}\right)  ^{4}\left[  X+Y,\left[
\left[  X,Y\right]  ,X+Y\right]  \right] \\
+\frac{1}{96}\left(  d_{1}+d_{2}+d_{3}+d_{4}\right)  ^{4}\left[  \left[
\left[  X,Y\right]  ,X-Y\right]  ,X-Y\right] \\
+d_{5}i_{51}+d_{5}i_{52}+d_{5}i_{53}+d_{5}j_{51}%
\end{array}
\right)  \left(  j_{52}\right) \nonumber\\
& =-\frac{1}{2}d_{1}d_{2}\left[  X,\left[  X,Y\right]  \right]  -\frac{1}%
{2}d_{1}d_{2}\left[  Y,\left[  X,Y\right]  \right]  -\frac{1}{2}d_{1}%
d_{3}\left[  X,\left[  X,Y\right]  \right]  -\frac{1}{2}d_{1}d_{3}\left[
Y,\left[  X,Y\right]  \right] \nonumber\\
& -\frac{1}{2}d_{2}d_{3}\left[  X,\left[  X,Y\right]  \right]  -\frac{1}%
{2}d_{2}d_{3}\left[  Y,\left[  X,Y\right]  \right]  -\frac{7}{4}d_{1}%
d_{2}d_{3}\left[  X,\left[  X,\left[  X,Y\right]  \right]  \right] \nonumber\\
& -\frac{7}{4}d_{1}d_{2}d_{3}\left[  X,\left[  Y,\left[  X,Y\right]  \right]
\right]  +\frac{3}{4}d_{1}d_{2}d_{3}\left[  X,\left[  \left[  X,Y\right]
,X\right]  \right]  -\frac{7}{4}d_{1}d_{2}d_{3}\left[  Y,\left[  X,\left[
X,Y\right]  \right]  \right] \nonumber\\
& -\frac{7}{4}d_{1}d_{2}d_{3}\left[  Y,\left[  Y,\left[  X,Y\right]  \right]
\right]  +\frac{3}{4}d_{1}d_{2}d_{3}\left[  Y,\left[  \left[  X,Y\right]
,X\right]  \right]  -\frac{1}{2}d_{1}d_{4}\left[  X,\left[  X,Y\right]
\right] \nonumber\\
& -\frac{1}{2}d_{1}d_{4}\left[  Y,\left[  X,Y\right]  \right]  -\frac{1}%
{2}d_{2}d_{4}\left[  X,\left[  X,Y\right]  \right]  -\frac{1}{2}d_{2}%
d_{4}\left[  Y,\left[  X,Y\right]  \right]  -\frac{7}{4}d_{1}d_{2}d_{4}\left[
X,\left[  X,\left[  X,Y\right]  \right]  \right] \nonumber\\
& -\frac{7}{4}d_{1}d_{2}d_{4}\left[  X,\left[  Y,\left[  X,Y\right]  \right]
\right]  +\frac{3}{4}d_{1}d_{2}d_{4}\left[  X,\left[  \left[  X,Y\right]
,X\right]  \right]  -\frac{7}{4}d_{1}d_{2}d_{4}\left[  Y,\left[  X,\left[
X,Y\right]  \right]  \right] \nonumber\\
& -\frac{7}{4}d_{1}d_{2}d_{4}\left[  Y,\left[  Y,\left[  X,Y\right]  \right]
\right]  +\frac{3}{4}d_{1}d_{2}d_{4}\left[  Y,\left[  \left[  X,Y\right]
,X\right]  \right]  -\frac{1}{2}d_{3}d_{4}\left[  X,\left[  X,Y\right]
\right] \nonumber\\
& -\frac{1}{2}d_{3}d_{4}\left[  Y,\left[  X,Y\right]  \right]  -\frac{7}%
{4}d_{1}d_{3}d_{4}\left[  X,\left[  X,\left[  X,Y\right]  \right]  \right]
-\frac{7}{4}d_{1}d_{3}d_{4}\left[  X,\left[  Y,\left[  X,Y\right]  \right]
\right] \nonumber\\
& +\frac{3}{4}d_{1}d_{3}d_{4}\left[  X,\left[  \left[  X,Y\right]  ,X\right]
\right]  -\frac{7}{4}d_{1}d_{3}d_{4}\left[  Y,\left[  X,\left[  X,Y\right]
\right]  \right]  -\frac{7}{4}d_{1}d_{3}d_{4}\left[  Y,\left[  Y,\left[
X,Y\right]  \right]  \right] \nonumber\\
& +\frac{3}{4}d_{1}d_{3}d_{4}\left[  Y,\left[  \left[  X,Y\right]  ,X\right]
\right]  -\frac{7}{4}d_{2}d_{3}d_{4}\left[  X,\left[  X,\left[  X,Y\right]
\right]  \right]  -\frac{7}{4}d_{2}d_{3}d_{4}\left[  X,\left[  Y,\left[
X,Y\right]  \right]  \right] \nonumber\\
& +\frac{3}{4}d_{2}d_{3}d_{4}\left[  X,\left[  \left[  X,Y\right]  ,X\right]
\right]  -\frac{7}{4}d_{2}d_{3}d_{4}\left[  Y,\left[  X,\left[  X,Y\right]
\right]  \right]  -\frac{7}{4}d_{2}d_{3}d_{4}\left[  Y,\left[  Y,\left[
X,Y\right]  \right]  \right] \nonumber\\
& +\frac{3}{4}d_{2}d_{3}d_{4}\left[  Y,\left[  \left[  X,Y\right]  ,X\right]
\right]  -3d_{1}d_{2}d_{3}d_{4}\left[  X,\left[  X,\left[  X,\left[
X,Y\right]  \right]  \right]  \right] \nonumber\\
& -3d_{1}d_{2}d_{3}d_{4}\left[  X,\left[  X,\left[  Y,\left[  X,Y\right]
\right]  \right]  \right]  +\frac{1}{2}d_{1}d_{2}d_{3}d_{4}\left[  X,\left[
X,\left[  \left[  X,Y\right]  ,X\right]  \right]  \right] \nonumber\\
& -3d_{1}d_{2}d_{3}d_{4}\left[  X,\left[  Y,\left[  X,\left[  X,Y\right]
\right]  \right]  \right]  -3d_{1}d_{2}d_{3}d_{4}\left[  X,\left[  Y,\left[
Y,\left[  X,Y\right]  \right]  \right]  \right] \nonumber\\
& +\frac{1}{2}d_{1}d_{2}d_{3}d_{4}\left[  X,\left[  Y,\left[  \left[
X,Y\right]  ,X\right]  \right]  \right]  -3d_{1}d_{2}d_{3}d_{4}\left[
Y,\left[  X,\left[  X,\left[  X,Y\right]  \right]  \right]  \right]
\nonumber\\
& -3d_{1}d_{2}d_{3}d_{4}\left[  Y,\left[  X,\left[  Y,\left[  X,Y\right]
\right]  \right]  \right]  -3d_{1}d_{2}d_{3}d_{4}\left[  Y,\left[  Y,\left[
X,\left[  X,Y\right]  \right]  \right]  \right] \nonumber\\
& -3d_{1}d_{2}d_{3}d_{4}\left[  Y,\left[  Y,\left[  Y,\left[  X,Y\right]
\right]  \right]  \right]  +\frac{1}{2}d_{1}d_{2}d_{3}d_{4}\left[  Y,\left[
X,\left[  \left[  X,Y\right]  ,X\right]  \right]  \right] \nonumber\\
& +\frac{1}{2}d_{1}d_{2}d_{3}d_{4}\left[  Y,\left[  Y,\left[  \left[
X,Y\right]  ,X\right]  \right]  \right]  -\frac{3}{2}d_{1}d_{2}d_{3}%
d_{4}\left[  \left[  X,Y\right]  ,\left[  X,\left[  X,Y\right]  \right]
\right] \nonumber\\
& -\frac{3}{2}d_{1}d_{2}d_{3}d_{4}\left[  \left[  X,Y\right]  ,\left[
Y,\left[  X,Y\right]  \right]  \right] \label{4.14}%
\end{align}

Letting $m_{53}$ be the right-hand side of (\ref{4.14}), we have
\begin{align}
& \left(  \ref{4.13}\right) \nonumber\\
& =\exp\;-d_{5}m_{51}-d_{5}m_{52}-d_{5}m_{53}.\exp\;d_{5}m_{53}\nonumber\\
& \exp\;\left(
\begin{array}
[c]{c}%
\left(  d_{1}+d_{2}+d_{3}+d_{4}+d_{5}\right)  \left(  X+Y\right)  +\frac{1}%
{2}\left(  d_{1}+d_{2}+d_{3}+d_{4}\right)  ^{2}\left[  X,Y\right] \\
+\frac{1}{12}\left(  d_{1}+d_{2}+d_{3}+d_{4}\right)  ^{3}\left[  \left[
X,Y\right]  ,Y-X\right] \\
+\frac{1}{96}\left(  d_{1}+d_{2}+d_{3}+d_{4}\right)  ^{4}\left(  \left[
X+Y,\left[  \left[  X,Y\right]  ,X+Y\right]  \right]  +\left[  \left[  \left[
X,Y\right]  ,X-Y\right]  ,X-Y\right]  \right) \\
+d_{5}i_{51}+d_{5}i_{52}+d_{5}i_{53}+d_{5}j_{51}%
\end{array}
\right)  .\nonumber\\
& \exp\;-d_{5}n_{51}-d_{5}n_{52}-d_{5}n_{53}-d_{5}n_{54}\nonumber\\
& \left)  \text{By Proposition \ref{t2.2}}\right( \nonumber\\
& =\exp\;-d_{5}m_{51}-d_{5}m_{52}-d_{5}m_{53}.\left(  m_{53}\right)  _{d_{5}%
}.\nonumber\\
& \exp\;\left(
\begin{array}
[c]{c}%
\left(  d_{1}+d_{2}+d_{3}+d_{4}+d_{5}\right)  \left(  X+Y\right)  +\frac{1}%
{2}\left(  d_{1}+d_{2}+d_{3}+d_{4}\right)  ^{2}\left[  X,Y\right] \\
+\frac{1}{12}\left(  d_{1}+d_{2}+d_{3}+d_{4}\right)  ^{3}\left[  \left[
X,Y\right]  ,Y-X\right] \\
+\frac{1}{96}\left(  d_{1}+d_{2}+d_{3}+d_{4}\right)  ^{4}\left(  \left[
X+Y,\left[  \left[  X,Y\right]  ,X+Y\right]  \right]  +\left[  \left[  \left[
X,Y\right]  ,X-Y\right]  ,X-Y\right]  \right) \\
+d_{5}i_{51}+d_{5}i_{52}+d_{5}i_{53}+d_{5}j_{51}%
\end{array}
\right)  .\nonumber\\
& \exp\;-d_{5}n_{51}-d_{5}n_{52}-d_{5}n_{53}-d_{5}n_{54}\nonumber\\
& \left)  \text{By Proposition \ref{t2.3}}\right( \nonumber\\
& =\exp\;-d_{5}m_{51}-d_{5}m_{52}-d_{5}m_{53}.\nonumber\\
& \exp\;\left(
\begin{array}
[c]{c}%
\left(  d_{1}+d_{2}+d_{3}+d_{4}+d_{5}\right)  \left(  X+Y\right)  +\frac{1}%
{2}\left(  d_{1}+d_{2}+d_{3}+d_{4}\right)  ^{2}\left[  X,Y\right] \\
+\frac{1}{12}\left(  d_{1}+d_{2}+d_{3}+d_{4}\right)  ^{3}\left[  \left[
X,Y\right]  ,Y-X\right] \\
+\frac{1}{96}\left(  d_{1}+d_{2}+d_{3}+d_{4}\right)  ^{4}\left(  \left[
X+Y,\left[  \left[  X,Y\right]  ,X+Y\right]  \right]  +\left[  \left[  \left[
X,Y\right]  ,X-Y\right]  ,X-Y\right]  \right) \\
+d_{5}i_{51}+d_{5}i_{52}+d_{5}i_{53}+d_{5}j_{51}+d_{5}j_{52}%
\end{array}
\right)  .\nonumber\\
& \exp\;-d_{5}n_{51}-d_{5}n_{52}-d_{5}n_{53}-d_{5}n_{54}\nonumber\\
& \left)  \text{By (\ref{4.14})}\right( \label{4.15}%
\end{align}

We let $j_{53}$ be the result of $-m_{51}-m_{52}-m_{53}$ by deleting all the
terms whose coefficients contain $d_{1}d_{2}d_{3}d_{4}$. Then, thanks to
Theorem \ref{t2.1}, we have
\begin{align}
& \delta^{\mathrm{right}}\left(  \exp\right)  \left(
\begin{array}
[c]{c}%
\left(  d_{1}+d_{2}+d_{3}+d_{4}+d_{5}\right)  \left(  X+Y\right)  +\frac{1}%
{2}\left(  d_{1}+d_{2}+d_{3}+d_{4}\right)  ^{2}\left[  X,Y\right] \\
+\frac{1}{12}\left(  d_{1}+d_{2}+d_{3}+d_{4}\right)  ^{3}\left[  \left[
X,Y\right]  ,Y-X\right] \\
+\frac{1}{96}\left(  d_{1}+d_{2}+d_{3}+d_{4}\right)  ^{4}\left[  X+Y,\left[
\left[  X,Y\right]  ,X+Y\right]  \right] \\
+\frac{1}{96}\left(  d_{1}+d_{2}+d_{3}+d_{4}\right)  ^{4}\left[  \left[
\left[  X,Y\right]  ,X-Y\right]  ,X-Y\right] \\
+d_{5}i_{51}+d_{5}i_{52}+d_{5}i_{53}+d_{5}j_{51}+d_{5}j_{52}%
\end{array}
\right)  \left(  j_{53}\right) \nonumber\\
& =\frac{3}{4}d_{1}d_{2}d_{3}\left[  X,\left[  X,\left[  X,Y\right]  \right]
\right]  +\frac{3}{4}d_{1}d_{2}d_{3}\left[  X,\left[  Y,\left[  X,Y\right]
\right]  \right]  +\frac{3}{4}d_{1}d_{2}d_{3}\left[  Y,\left[  X,\left[
X,Y\right]  \right]  \right] \nonumber\\
& +\frac{3}{4}d_{1}d_{2}d_{3}\left[  Y,\left[  Y,\left[  X,Y\right]  \right]
\right]  +\frac{3}{4}d_{1}d_{2}d_{4}\left[  X,\left[  X,\left[  X,Y\right]
\right]  \right]  +\frac{3}{4}d_{1}d_{2}d_{4}\left[  X,\left[  Y,\left[
X,Y\right]  \right]  \right] \nonumber\\
& +\frac{3}{4}d_{1}d_{2}d_{4}\left[  Y,\left[  X,\left[  X,Y\right]  \right]
\right]  +\frac{3}{4}d_{1}d_{2}d_{4}\left[  Y,\left[  Y,\left[  X,Y\right]
\right]  \right]  +\frac{3}{4}d_{1}d_{3}d_{4}\left[  X,\left[  X,\left[
X,Y\right]  \right]  \right] \nonumber\\
& +\frac{3}{4}d_{1}d_{3}d_{4}\left[  X,\left[  Y,\left[  X,Y\right]  \right]
\right]  +\frac{3}{4}d_{1}d_{3}d_{4}\left[  Y,\left[  X,\left[  X,Y\right]
\right]  \right]  +\frac{3}{4}d_{1}d_{3}d_{4}\left[  Y,\left[  Y,\left[
X,Y\right]  \right]  \right] \nonumber\\
& +\frac{3}{4}d_{2}d_{3}d_{4}\left[  X,\left[  X,\left[  X,Y\right]  \right]
\right]  +\frac{3}{4}d_{2}d_{3}d_{4}\left[  X,\left[  Y,\left[  X,Y\right]
\right]  \right]  +\frac{3}{4}d_{2}d_{3}d_{4}\left[  Y,\left[  X,\left[
X,Y\right]  \right]  \right] \nonumber\\
& +\frac{3}{4}d_{2}d_{3}d_{4}\left[  Y,\left[  Y,\left[  X,Y\right]  \right]
\right]  +\frac{1}{2}d_{1}d_{2}d_{3}d_{4}\left[  X,\left[  X,\left[  X,\left[
X,Y\right]  \right]  \right]  \right] \nonumber\\
& +\frac{1}{2}d_{1}d_{2}d_{3}d_{4}\left[  X,\left[  X,\left[  Y,\left[
X,Y\right]  \right]  \right]  \right]  +\frac{1}{2}d_{1}d_{2}d_{3}d_{4}\left[
X,\left[  Y,\left[  X,\left[  X,Y\right]  \right]  \right]  \right]
\nonumber\\
& +\frac{1}{2}d_{1}d_{2}d_{3}d_{4}\left[  X,\left[  Y,\left[  Y,\left[
X,Y\right]  \right]  \right]  \right]  +\frac{1}{2}d_{1}d_{2}d_{3}d_{4}\left[
Y,\left[  X,\left[  X,\left[  X,Y\right]  \right]  \right]  \right]
\nonumber\\
& +\frac{1}{2}d_{1}d_{2}d_{3}d_{4}\left[  Y,\left[  X,\left[  Y,\left[
X,Y\right]  \right]  \right]  \right]  +\frac{1}{2}d_{1}d_{2}d_{3}d_{4}\left[
Y,\left[  Y,\left[  X,\left[  X,Y\right]  \right]  \right]  \right]
\nonumber\\
& +\frac{1}{2}d_{1}d_{2}d_{3}d_{4}\left[  Y,\left[  Y,\left[  Y,\left[
X,Y\right]  \right]  \right]  \right] \label{4.16}%
\end{align}

Letting $m_{54}$ be the right-hand side of (\ref{4.16}), we have
\begin{align}
& \left(  \ref{4.15}\right) \nonumber\\
& =\exp\;-d_{5}m_{51}-d_{5}m_{52}-d_{5}m_{53}-d_{5}m_{54}.\exp\;d_{5}%
m_{54}\nonumber\\
& \exp\;\left(
\begin{array}
[c]{c}%
\left(  d_{1}+d_{2}+d_{3}+d_{4}+d_{5}\right)  \left(  X+Y\right)  +\frac{1}%
{2}\left(  d_{1}+d_{2}+d_{3}+d_{4}\right)  ^{2}\left[  X,Y\right] \\
+\frac{1}{12}\left(  d_{1}+d_{2}+d_{3}+d_{4}\right)  ^{3}\left[  \left[
X,Y\right]  ,Y-X\right] \\
+\frac{1}{96}\left(  d_{1}+d_{2}+d_{3}+d_{4}\right)  ^{4}\left(  \left[
X+Y,\left[  \left[  X,Y\right]  ,X+Y\right]  \right]  +\left[  \left[  \left[
X,Y\right]  ,X-Y\right]  ,X-Y\right]  \right) \\
+d_{5}i_{51}+d_{5}i_{52}+d_{5}i_{53}+d_{5}j_{51}+d_{5}j_{52}%
\end{array}
\right)  .\nonumber\\
& \exp\;-d_{5}n_{51}-d_{5}n_{52}-d_{5}n_{53}-d_{5}n_{54}\nonumber\\
& \left)  \text{By Proposition \ref{t2.2}}\right( \nonumber\\
& =\exp\;-d_{5}m_{51}-d_{5}m_{52}-d_{5}m_{53}-d_{5}m_{54}.\left(
m_{54}\right)  _{d_{5}}\nonumber\\
& \exp\;\left(
\begin{array}
[c]{c}%
\left(  d_{1}+d_{2}+d_{3}+d_{4}+d_{5}\right)  \left(  X+Y\right)  +\frac{1}%
{2}\left(  d_{1}+d_{2}+d_{3}+d_{4}\right)  ^{2}\left[  X,Y\right] \\
+\frac{1}{12}\left(  d_{1}+d_{2}+d_{3}+d_{4}\right)  ^{3}\left[  \left[
X,Y\right]  ,Y-X\right] \\
+\frac{1}{96}\left(  d_{1}+d_{2}+d_{3}+d_{4}\right)  ^{4}\left(  \left[
X+Y,\left[  \left[  X,Y\right]  ,X+Y\right]  \right]  +\left[  \left[  \left[
X,Y\right]  ,X-Y\right]  ,X-Y\right]  \right) \\
+d_{5}i_{51}+d_{5}i_{52}+d_{5}i_{53}+d_{5}j_{51}+d_{5}j_{52}%
\end{array}
\right)  .\nonumber\\
& \exp\;-d_{5}n_{51}-d_{5}n_{52}-d_{5}n_{53}-d_{5}n_{54}\nonumber\\
& \left)  \text{By Proposition \ref{t2.3}}\right( \nonumber\\
& =\exp\;-d_{5}m_{51}-d_{5}m_{52}-d_{5}m_{53}-d_{5}m_{54}.\nonumber\\
& \exp\;\left(
\begin{array}
[c]{c}%
\left(  d_{1}+d_{2}+d_{3}+d_{4}+d_{5}\right)  \left(  X+Y\right)  +\frac{1}%
{2}\left(  d_{1}+d_{2}+d_{3}+d_{4}\right)  ^{2}\left[  X,Y\right] \\
+\frac{1}{12}\left(  d_{1}+d_{2}+d_{3}+d_{4}\right)  ^{3}\left[  \left[
X,Y\right]  ,Y-X\right] \\
+\frac{1}{96}\left(  d_{1}+d_{2}+d_{3}+d_{4}\right)  ^{4}\left(  \left[
X+Y,\left[  \left[  X,Y\right]  ,X+Y\right]  \right]  +\left[  \left[  \left[
X,Y\right]  ,X-Y\right]  ,X-Y\right]  \right) \\
+d_{5}i_{51}+d_{5}i_{52}+d_{5}i_{53}+d_{5}j_{51}+d_{5}j_{52}+d_{5}j_{53}%
\end{array}
\right)  .\nonumber\\
& \exp\;-d_{5}n_{51}-d_{5}n_{52}-d_{5}n_{53}-d_{5}n_{54}\nonumber\\
& \left)  \text{By (\ref{4.16})}\right( \label{4.17}%
\end{align}

Since the coefficient of every term in $-m_{51}-m_{52}-m_{53}-m_{54}$ contains
$d_{1}d_{2}d_{3}d_{4}$, we are done. We have
\begin{align*}
& i_{51}+i_{52}+i_{53}+j_{51}+j_{52}+j_{53}\\
& =d_{1}\left[  X,Y\right]  +d_{2}\left[  X,Y\right]  -\frac{1}{2}d_{1}%
d_{2}\left[  \left[  X,Y\right]  ,X\right]  +\frac{1}{2}d_{1}d_{2}\left[
\left[  X,Y\right]  ,Y\right]  +d_{3}\left[  X,Y\right] \\
& -\frac{1}{2}d_{1}d_{3}\left[  \left[  X,Y\right]  ,X\right]  +\frac{1}%
{2}d_{1}d_{3}\left[  \left[  X,Y\right]  ,Y\right]  -\frac{1}{2}d_{2}%
d_{3}\left[  \left[  X,Y\right]  ,X\right] \\
& +\frac{1}{2}d_{2}d_{3}\left[  \left[  X,Y\right]  ,Y\right]  +\frac{1}%
{4}d_{1}d_{2}d_{3}\left[  X,\left[  \left[  X,Y\right]  ,X\right]  \right]
+\frac{1}{4}d_{1}d_{2}d_{3}\left[  X,\left[  \left[  X,Y\right]  ,Y\right]
\right] \\
& +\frac{1}{4}d_{1}d_{2}d_{3}\left[  Y,\left[  \left[  X,Y\right]  ,X\right]
\right]  +\frac{1}{4}d_{1}d_{2}d_{3}\left[  Y,\left[  \left[  X,Y\right]
,Y\right]  \right]  +\frac{1}{4}d_{1}d_{2}d_{3}\left[  \left[  \left[
X,Y\right]  ,X\right]  ,X\right] \\
& -\frac{1}{4}d_{1}d_{2}d_{3}\left[  \left[  \left[  X,Y\right]  ,X\right]
,Y\right]  -\frac{1}{4}d_{1}d_{2}d_{3}\left[  \left[  \left[  X,Y\right]
,Y\right]  ,X\right]  +\frac{1}{4}d_{1}d_{2}d_{3}\left[  \left[  \left[
X,Y\right]  ,Y\right]  ,Y\right] \\
& +d_{4}\left[  X,Y\right]  -\frac{1}{2}d_{1}d_{4}\left[  \left[  X,Y\right]
,X\right]  +\frac{1}{2}d_{1}d_{4}\left[  \left[  X,Y\right]  ,Y\right]
-\frac{1}{2}d_{2}d_{4}\left[  \left[  X,Y\right]  ,X\right] \\
& +\frac{1}{2}d_{2}d_{4}\left[  \left[  X,Y\right]  ,Y\right]  +\frac{1}%
{4}d_{1}d_{2}d_{4}\left[  X,\left[  \left[  X,Y\right]  ,X\right]  \right]
+\frac{1}{4}d_{1}d_{2}d_{4}\left[  X,\left[  \left[  X,Y\right]  ,Y\right]
\right] \\
& +\frac{1}{4}d_{1}d_{2}d_{4}\left[  Y,\left[  \left[  X,Y\right]  ,X\right]
\right]  +\frac{1}{4}d_{1}d_{2}d_{4}\left[  Y,\left[  \left[  X,Y\right]
,Y\right]  \right]  +\frac{1}{4}d_{1}d_{2}d_{4}\left[  \left[  \left[
X,Y\right]  ,X\right]  ,X\right] \\
& -\frac{1}{4}d_{1}d_{2}d_{4}\left[  \left[  \left[  X,Y\right]  ,X\right]
,Y\right]  -\frac{1}{4}d_{1}d_{2}d_{4}\left[  \left[  \left[  X,Y\right]
,Y\right]  ,X\right]  +\frac{1}{4}d_{1}d_{2}d_{4}\left[  \left[  \left[
X,Y\right]  ,Y\right]  ,Y\right] \\
& -\frac{1}{2}d_{3}d_{4}\left[  \left[  X,Y\right]  ,X\right]  +\frac{1}%
{2}d_{3}d_{4}\left[  \left[  X,Y\right]  ,Y\right]  +\frac{1}{4}d_{1}%
d_{3}d_{4}\left[  X,\left[  \left[  X,Y\right]  ,X\right]  \right] \\
& +\frac{1}{4}d_{1}d_{3}d_{4}\left[  X,\left[  \left[  X,Y\right]  ,Y\right]
\right]  +\frac{1}{4}d_{1}d_{3}d_{4}\left[  Y,\left[  \left[  X,Y\right]
,X\right]  \right]  +\frac{1}{4}d_{1}d_{3}d_{4}\left[  Y,\left[  \left[
X,Y\right]  ,Y\right]  \right] \\
& +\frac{1}{4}d_{1}d_{3}d_{4}\left[  \left[  \left[  X,Y\right]  ,X\right]
,X\right]  -\frac{1}{4}d_{1}d_{3}d_{4}\left[  \left[  \left[  X,Y\right]
,X\right]  ,Y\right]  -\frac{1}{4}d_{1}d_{3}d_{4}\left[  \left[  \left[
X,Y\right]  ,Y\right]  ,X\right] \\
& +\frac{1}{4}d_{1}d_{3}d_{4}\left[  \left[  \left[  X,Y\right]  ,Y\right]
,Y\right]  +\frac{1}{4}d_{2}d_{3}d_{4}\left[  X,\left[  \left[  X,Y\right]
,X\right]  \right]  +\frac{1}{4}d_{2}d_{3}d_{4}\left[  X,\left[  \left[
X,Y\right]  ,Y\right]  \right] \\
& +\frac{1}{4}d_{2}d_{3}d_{4}\left[  Y,\left[  \left[  X,Y\right]  ,X\right]
\right]  +\frac{1}{4}d_{2}d_{3}d_{4}\left[  Y,\left[  \left[  X,Y\right]
,Y\right]  \right]  +\frac{1}{4}d_{2}d_{3}d_{4}\left[  \left[  \left[
X,Y\right]  ,X\right]  ,X\right] \\
& -\frac{1}{4}d_{2}d_{3}d_{4}\left[  \left[  \left[  X,Y\right]  ,X\right]
,Y\right]  -\frac{1}{4}d_{2}d_{3}d_{4}\left[  \left[  \left[  X,Y\right]
,Y\right]  ,X\right]  +\frac{1}{4}d_{2}d_{3}d_{4}\left[  \left[  \left[
X,Y\right]  ,Y\right]  ,Y\right]
\end{align*}
on the one hand, and
\begin{align*}
& -m_{51}-m_{52}-m_{53}-m_{54}-n_{51}-n_{52}-n_{53}-n_{54}\\
& =d_{1}d_{2}d_{3}d_{4}\left[  X,\left[  X,\left[  \left[  X,Y\right]
,X\right]  \right]  \right]  -d_{1}d_{2}d_{3}d_{4}\left[  X,\left[  X,\left[
\left[  X,Y\right]  ,Y\right]  \right]  \right] \\
& +d_{1}d_{2}d_{3}d_{4}\left[  X,\left[  Y,\left[  \left[  X,Y\right]
,X\right]  \right]  \right]  -d_{1}d_{2}d_{3}d_{4}\left[  X,\left[  Y,\left[
\left[  X,Y\right]  ,Y\right]  \right]  \right] \\
& +d_{1}d_{2}d_{3}d_{4}\left[  Y,\left[  X,\left[  \left[  X,Y\right]
,X\right]  \right]  \right]  -d_{1}d_{2}d_{3}d_{4}\left[  Y,\left[  X,\left[
\left[  X,Y\right]  ,Y\right]  \right]  \right] \\
& +d_{1}d_{2}d_{3}d_{4}\left[  Y,\left[  Y,\left[  \left[  X,Y\right]
,X\right]  \right]  \right]  -d_{1}d_{2}d_{3}d_{4}\left[  Y,\left[  Y,\left[
\left[  X,Y\right]  ,Y\right]  \right]  \right] \\
& -\frac{1}{6}d_{1}d_{2}d_{3}d_{4}\left[  X,\left[  \left[  \left[
X,Y\right]  ,X\right]  ,X\right]  \right]  -\frac{1}{6}d_{1}d_{2}d_{3}%
d_{4}\left[  X,\left[  \left[  \left[  X,Y\right]  ,X\right]  ,Y\right]
\right] \\
& +\frac{1}{6}d_{1}d_{2}d_{3}d_{4}\left[  X,\left[  \left[  \left[
X,Y\right]  ,Y\right]  ,X\right]  \right]  +\frac{1}{6}d_{1}d_{2}d_{3}%
d_{4}\left[  X,\left[  \left[  \left[  X,Y\right]  ,Y\right]  ,Y\right]
\right] \\
& -\frac{1}{6}d_{1}d_{2}d_{3}d_{4}\left[  Y,\left[  \left[  \left[
X,Y\right]  ,X\right]  ,X\right]  \right]  -\frac{1}{6}d_{1}d_{2}d_{3}%
d_{4}\left[  Y,\left[  \left[  \left[  X,Y\right]  ,X\right]  ,Y\right]
\right] \\
& +\frac{1}{6}d_{1}d_{2}d_{3}d_{4}\left[  Y,\left[  \left[  \left[
X,Y\right]  ,Y\right]  ,X\right]  \right]  +\frac{1}{6}d_{1}d_{2}d_{3}%
d_{4}\left[  Y,\left[  \left[  \left[  X,Y\right]  ,Y\right]  ,Y\right]
\right] \\
& +\frac{1}{2}d_{1}d_{2}d_{3}d_{4}\left[  \left[  X,Y\right]  ,\left[  \left[
X,Y\right]  ,X\right]  \right]  +\frac{1}{2}d_{1}d_{2}d_{3}d_{4}\left[
\left[  X,Y\right]  ,\left[  \left[  X,Y\right]  ,Y\right]  \right] \\
& -\frac{1}{8}d_{1}d_{2}d_{3}d_{4}\left[  \left[  X,\left[  \left[
X,Y\right]  ,X\right]  \right]  ,X\right]  +\frac{1}{8}d_{1}d_{2}d_{3}%
d_{4}\left[  \left[  X,\left[  \left[  X,Y\right]  ,X\right]  \right]
,Y\right] \\
& -\frac{1}{8}d_{1}d_{2}d_{3}d_{4}\left[  \left[  X,\left[  \left[
X,Y\right]  ,Y\right]  \right]  ,X\right]  +\frac{1}{8}d_{1}d_{2}d_{3}%
d_{4}\left[  \left[  X,\left[  \left[  X,Y\right]  ,Y\right]  \right]
,Y\right] \\
& -\frac{1}{8}d_{1}d_{2}d_{3}d_{4}\left[  \left[  Y,\left[  \left[
X,Y\right]  ,X\right]  \right]  ,X\right]  +\frac{1}{8}d_{1}d_{2}d_{3}%
d_{4}\left[  \left[  Y,\left[  \left[  X,Y\right]  ,X\right]  \right]
,Y\right] \\
& -\frac{1}{8}d_{1}d_{2}d_{3}d_{4}\left[  \left[  Y,\left[  \left[
X,Y\right]  ,Y\right]  \right]  ,X\right]  +\frac{1}{8}d_{1}d_{2}d_{3}%
d_{4}\left[  \left[  Y,\left[  \left[  X,Y\right]  ,Y\right]  \right]
,Y\right] \\
& -\frac{1}{8}d_{1}d_{2}d_{3}d_{4}\left[  \left[  \left[  \left[  X,Y\right]
,X\right]  ,X\right]  ,X\right]  +\frac{1}{8}d_{1}d_{2}d_{3}d_{4}\left[
\left[  \left[  \left[  X,Y\right]  ,X\right]  ,X\right]  ,Y\right] \\
& +\frac{1}{8}d_{1}d_{2}d_{3}d_{4}\left[  \left[  \left[  \left[  X,Y\right]
,X\right]  ,Y\right]  ,X\right]  -\frac{1}{8}d_{1}d_{2}d_{3}d_{4}\left[
\left[  \left[  \left[  X,Y\right]  ,X\right]  ,Y\right]  ,Y\right] \\
& +\frac{1}{8}d_{1}d_{2}d_{3}d_{4}\left[  \left[  \left[  \left[  X,Y\right]
,Y\right]  ,X\right]  ,X\right]  -\frac{1}{8}d_{1}d_{2}d_{3}d_{4}\left[
\left[  \left[  \left[  X,Y\right]  ,Y\right]  ,X\right]  ,Y\right] \\
& -\frac{1}{8}d_{1}d_{2}d_{3}d_{4}\left[  \left[  \left[  \left[  X,Y\right]
,Y\right]  ,Y\right]  ,X\right]  +\frac{1}{8}d_{1}d_{2}d_{3}d_{4}\left[
\left[  \left[  \left[  X,Y\right]  ,Y\right]  ,Y\right]  ,Y\right]
\end{align*}
on the other. Therefore we have the desired result.
\end{proof}

\end{document}